\newcommand{\br}{\mathbb{R}}
\newcommand{\bz}{\mathbb Z}
\newcommand{\cc}{\mathcal C} %curve graph
\newcommand{\ca}{\mathcal A} %arc graph (also the set of independent sets of a graph)
\newcommand{\mcg}{\mathrm{MCG}} %mapping class group
\newcommand{\pmcg}{\mathrm{PMCG}} %pure mapping class group
\newcommand{\orb}{\mathcal{O}} %mcg-orbit of a curve
\newcommand{\cg}{\mathcal G} 
\newcommand{\cp}{\mathcal P}
\newcommand{\al}{\alpha}
\newcommand{\be}{\beta}
\newcommand{\co}{\colon\thinspace}
\newcommand{\cv}{\mathcal{V}}
\DeclareMathOperator{\diam}{diam}
\DeclareMathOperator{\Ends}{Ends}
\DeclareMathOperator{\genus}{genus}
\DeclareMathOperator{\sep}{Sep}
\DeclareMathOperator{\nonsep}{NonSep}
\DeclareMathOperator{\fii}{\mathfrak{f}}
\newtheorem{theorintro}{Theorem}
\newtheorem{Thm}{Theorem}[section]
\newtheorem{Prop}[Thm]{Proposition}
\newtheorem{Lem}[Thm]{Lemma}
\newtheorem{Cor}[Thm]{Corollary}
\newtheorem{Conj}[Thm]{Conjecture}
\newtheorem{Question}[Thm]{Question}
\newtheorem*{mquestion*}{Motivating Question}
\newtheorem*{mthm}{Main Theorem}
\theoremstyle{definition}
\newtheorem{Def}[Thm]{Definition}
\theoremstyle{remark}
\newtheorem{Rem}[Thm]{Remark}
\numberwithin{equation}{section}
\title[Graphs of curves on infinite-type surfaces with mapping class group actions]{Graphs of curves on infinite-type surfaces\\ with mapping class group actions}
\author{Matthew Gentry Durham}
\address[Matthew Gentry Durham]{Department of Mathematics, Yale University}
\email{matthew.durham@yale.edu}
\author{Federica Fanoni}
\address[Federica Fanoni]{Mathematics Institute, University of Heidelberg}
\email{federica.fanoni@gmail.com}
\author{Nicholas G.\ Vlamis}
\address[Nicholas G.\ Vlamis]{Department of Mathematics, University of Michigan}
\email{vlamis@umich.edu}
\begin{document}

\begin{abstract}
We study when the mapping class group of an infinite-type surface $S$ admits an action with unbounded orbits on a connected graph whose vertices are simple closed curves on $S$. We introduce a topological invariant for infinite-type surfaces that determines in many cases whether there is such an action.  This allows us to conclude that, as non-locally compact topological groups, many big mapping class groups have nontrivial coarse geometry in the sense of Rosendal.% \cite{RosendalCoarse}. 
\end{abstract}
\maketitle

%%%%%%%%%%%
%Introduction
%%%%%%%%%%%
\vspace{-20pt}

\section{Introduction}

This article is concerned with the existence of robust generalizations of the standard curve graph to the context of infinite-type surfaces, i.e. surfaces with infinitely-generated fundamental groups.
The main interest is to build geometrically meaningful actions of mapping class groups on metric spaces constructed from  topological data of surfaces.
All surfaces considered are connected, orientable, and without boundary.

In the finite-type setting, the curve graph is known to be connected, infinite diameter, and Gromov hyperbolic with pseudo-Anosov mapping classes acting with positive translation length \cite{MasurHyperbolicityI}.  However, the curve graph of an infinite-type surface always has diameter two and as such it is trivial from the perspective of coarse geometry. 

Many authors have recently investigated numerous analogues of curve, arc, and pants graphs for specific classes of infinite-type surfaces \cite{BavardHyperbolic, AramayonaArc, FossasParlier,  AramayonaGeometry} (see \S \ref{intro:context}). 
The cases where the above constructions yield geometrically interesting actions of the mapping class group  require the underlying surface to have a finite positive number of isolated planar ends or positive finite genus, respectively.
The condition on the ends is required to study properly embedded arcs in the surface.
In order to work with infinite-type surfaces in general---removing these restrictions on the topology---we focus on curves in the surface, which leads us to ask:

\begin{mquestion*}
When does the mapping class group $\mcg(S)$ of an infinite-type surface act with unbounded orbits on a connected graph consisting of curves?
\end{mquestion*}

By a graph \textit{consisting of curves}, we mean a graph whose vertices correspond to isotopy classes of essential simple closed curves.
In order to answer this question (for but a few classes of surfaces), we introduce the \emph{finite-invariance index}, denoted $\mathfrak{f}(S)$, which associates to a surface $S$ an element of $\mathbb{N}\cup \{0, \infty\}$ (see Definition \ref{def:fii}).
Our results can be summarized as follows:

\begin{mthm} \label{thm:meta}
If $\fii(S)\geq 4$, then $\mcg(S)$ admits an unbounded action on a graph consisting of curves.  
If $\fii(S) = 0$, then $\mcg(S)$ admits no such unbounded action. 
\end{mthm}

When $\fii(S) \geq 4$, we construct an explicit graph on which $\mcg(S)$ acts with unbounded orbits.
In addition, we provide a sufficient condition for these graphs to be (uniformly) Gromov hyperbolic (see Theorem \ref{thm:main}).  
When $\fii(S) \in \{2,3\}$, we prove that $\fii(S)$ is too coarse an invariant to determine whether such an unbounded action exists; we give specific examples in \S \ref{sec:oddities}.  In the case of $\fii(S)=1$, we conjecture that $\mcg(S)$ admits no unbounded action on a graph consisting of curves.  

\begin{Rem}
The requirement of an infinite-diameter mapping class group orbit rules out examples of graphs with little topological significance.  For instance, if $S$ is a surface with countably many orbits of curves $\{\orb_i\}_{i\in\mathbb{Z}}$, we can consider the following graph: the vertices are all curves and there is an edge between $a\in \orb_i$ and $b\in\orb_j$ if and only if $|i-j|\leq 1$. This graph is quasi-isometric to $\bz$, so it has infinite-diameter (and it is Gromov hyperbolic), but each orbit of the mapping class group action has diameter two.
\end{Rem}

\begin{Rem}
A stronger version of the Main Theorem holds for the pure mapping class $\pmcg(S)$, the subgroup of $\mcg(S)$ which fixes $\Ends(S)$; see Corollary \ref{cor:pure} below.
\end{Rem}

\subsection{Motivation and context} \label{intro:context}
The \emph{mapping class group} of an oriented surface $S$, denoted $\mcg(S)$, is the group of orientation preserving homeomorphisms of $S$ modulo homotopy.
The algebraic and geometric structure of $\mcg(S)$ for finite-type surfaces is largely well understood; but, to date, \emph{big} mapping class groups, i.e. mapping class groups of infinite-type surfaces, remain mysterious.
There has been recent interest in better understanding the structure of these groups as one is led to them quite naturally; for instance, from studying group actions on finite-type surfaces (see \cite{CalegariCircular, CalegariBig}), constructing foliations of 3-manifolds (see \cite{CantwellEndperiodic}), and from the Artinization of automorphism groups of trees and stable properties of mapping class groups (see \cite{FunarInfinite, FunarBraided, FunarBraided2, FunarUniversal}).
For a discussion of these connections, see \cite{CalegariBig2}.

This article is largely motivated by the recent work of Bavard \cite{BavardHyperbolic}, where she constructed an action of $\mcg(\br^2 \smallsetminus C)$ on an infinite-diameter Gromov hyperbolic graph, where $C\subset \br^2$ is a Cantor set.
Furthermore, she used this action to prove that the bounded second cohomology of $\mcg(\br^2\smallsetminus C)$ is infinite dimensional.
These results answered a series of questions posed by Calegari \cite{CalegariBig}.  

By isolating the key properties behind the  graph constructed in Bavard's work, Aramayona--Fossas--Parlier \cite{AramayonaArc} extended Bavard's construction to a larger family of surfaces while preserving the geometric properties.  This was followed by Aramayona-Valdez \cite{AramayonaGeometry} in which the authors gave necessary and sufficient conditions for this construction to hold. 

This recent work on big mapping class groups has been focused on constructing actions on Gromov hyperbolic spaces; the hope is to mimic the theory for finite-type surfaces.
The curve graph of a surface $S$ is the graph, denoted $\cc(S)$, whose vertices correspond to isotopy classes of essential simple closed curves where adjacency denotes the existence of disjoint representatives. 
In the finite-type setting, the curve graph proved extremely useful in studying mapping class groups, for instance in understanding  their coarse geometry (e.g.\ to show that the mapping class group has finite geometric rank \cite{BehrstockDimension} and finite asymptotic dimension \cite{BBF10} and to prove quasi-isometric rigidity \cite{BehrstockGeometry, BowditchLarge}) or to study their cohomological properties \cite{HarerStability, HarerVirtual}.

A striking difference between the finite- and infinite-type setting is that the mapping class group is finitely generated in the former case but not in the latter, since big mapping class groups are uncountable.  This seems discouraging from the viewpoint of geometric group theory; however, there has been recent progress (see  \cite{RosendalCoarse}) in studying the coarse geometry of topological groups that are neither finitely generated nor locally compact.
A particularly beautiful application of this theory in \cite{MannLarge} is the study of the large-scale geometry of homeomorphism groups of compact manifolds and its relation to the topology of and dynamics on the manifold.

By equipping $\mathrm{Homeo^+(S)}$ with the compact-open topology, the mapping class group is 
\[
\mcg(S) = \pi_0(\mathrm{Homeo}^+(S)).
\]
Giving $\mcg(S)$ the associated quotient topology, it becomes a topological group.
With this topology, it is an easy exercise to see that if $\mcg(S)$ acts on a graph consisting of curves, then it does so continuously.
The actions with unbounded orbits constructed in the Main Theorem, when $\fii(S) \geq 4$, yield a left-invariant continuous infinite-diameter pseudo-metric on $\mcg(S)$:
Indeed, let $\Gamma$ be the graph guaranteed from the Main Theorem and fix a vertex $x \in \Gamma$.
The function $d\co \mcg(S) \times \mcg(S) \to \br$ defined by 
\[
d(f,g) = d_\Gamma(f(x), g(x))
\]
where $f,g \in \mcg(S)$ and $d_\Gamma$ is the graph metric on $\Gamma$ is the desired pseudo-metric.
By the Main Theorem, $(\mcg(S), d)$ has infinite-diameter.
In the language of \cite{RosendalCoarse}, as an immediate consequence, we have the following corollary, which informally says that $\mcg(S)$ is not coarsely a point; hence, $\mcg(S)$ is potentially amenable to investigation via these coarse geometric tools:

\begin{Cor}
If $\fii(S) \geq 4$, then $\mcg(S)$ does not have property (OB).
\end{Cor}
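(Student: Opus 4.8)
The plan is to use one of Rosendal's equivalent formulations of property (OB): a topological group $G$ has property (OB) if and only if every continuous left-invariant pseudo-metric on $G$ is bounded (equivalently, every continuous isometric action of $G$ on a metric space has bounded orbits) \cite{RosendalCoarse}. Thus, to prove the corollary it suffices to exhibit a single continuous left-invariant pseudo-metric on $\mcg(S)$ of infinite diameter.

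Such a pseudo-metric has in fact already been produced in the discussion preceding the statement, so the work is mainly one of verification. Concretely, I would invoke the Main Theorem, which (since $\fii(S) \geq 4$) furnishes a connected graph $\Gamma$ consisting of curves on which $\mcg(S)$ acts with an unbounded orbit. Fixing a vertex $x \in \Gamma$ and setting $d(f,g) = d_\Gamma(f(x), g(x))$, I would then check the three required properties. Symmetry, the triangle inequality, and $d(f,f)=0$ are inherited directly from the graph metric $d_\Gamma$, so $d$ is a pseudo-metric (it fails to be a genuine metric precisely because distinct mapping classes may send $x$ to the same vertex). Left-invariance is the computation $d(hf, hg) = d_\Gamma(hf(x), hg(x)) = d_\Gamma(f(x), g(x))$, valid because every element of $\mcg(S)$ acts on $\Gamma$ as a simplicial automorphism and hence as an isometry of $d_\Gamma$.

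It remains to address continuity and unboundedness. For continuity, since $d$ is left-invariant it is enough to check that the associated length function $g \mapsto d_\Gamma(x, g(x))$ is continuous, and since $\Gamma$ carries the (discrete) graph metric this reduces to continuity of the orbit map $g \mapsto g(x)$, i.e.\ to continuity of the action of $\mcg(S)$ on $\Gamma$; as noted above, this action is automatically continuous for the quotient topology on $\mcg(S)$. For unboundedness, the Main Theorem guarantees that the orbit $\mcg(S)\cdot x$ has infinite diameter in $\Gamma$, so
\[
\sup_{f,g \in \mcg(S)} d(f,g) = \diam\big(\mcg(S)\cdot x\big) = \infty.
\]
Hence $d$ is an unbounded continuous left-invariant pseudo-metric, and $\mcg(S)$ fails property (OB).

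I do not expect a genuine obstacle here: essentially all of the content is packaged in the Main Theorem, and the corollary is merely a translation into Rosendal's language. The only points requiring (minor) care are recalling the precise equivalent characterization of (OB) in terms of continuous left-invariant pseudo-metrics and confirming the continuity of the constructed pseudo-metric, which has been reduced to the already-observed continuity of the mapping class group action on a graph of curves.
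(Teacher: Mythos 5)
Your proposal is correct and takes essentially the same approach as the paper: the paper's proof of this corollary is exactly the discussion preceding it, which constructs the pseudo-metric $d(f,g) = d_\Gamma(f(x),g(x))$ from the unbounded action furnished by the Main Theorem and then invokes Rosendal's characterization of property (OB). Your explicit verifications of left-invariance, continuity, and unboundedness simply spell out what the paper leaves implicit.
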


\begin{Rem}
There are no well-understood generating sets for $\mcg(S)$, so the existence of such a pseudo-metric is unclear \emph{a priori}.
\end{Rem}
\begin{Rem}
As a consequence of Corollary \ref{cor:pure} below, this statement holds for the pure mapping class group of any surface $S$ with at least four ends.
\end{Rem}

\subsection{Discussion of results}
\label{sec:discussion}

Let $S$ be any infinite-type surface, $\Ends(S)$ its space of ends, and $\mcg(S)$ its mapping class group.
We first define $\fii$ from the Main Theorem:

\begin{Def}
\label{def:fii}
We say that a collection $\cp$ of disjoint subsets of the space of ends is \emph{$\mcg(S)$-invariant} if for every $P\in\cp$ and for every $\varphi\in\mcg(S)$ there exists $Q\in\cp$ such that $\varphi(P)=Q$. The \emph{finite-invariance index} of $S$, denoted $\fii(S)$, is defined as follows:

\begin{itemize}
\item $\fii(S)\geq n$ if there is a $\mcg(S)$-invariant collection $\mathcal{P}$ of disjoint closed proper subsets of $\Ends(S)$ satisfying $|\mathcal{P}| = n$;
\item $\fii(S)=\infty$ if $\genus(S)$ is finite and positive; 
\item $\fii(S)=0$ otherwise. 
\end{itemize}
We say that $\fii(S)=n$ if $\fii(S)\geq n$ but $\fii(S)\ngeq n+1$.
\end{Def}

We first look at $\fii(S)=0$:

\begin{theorintro}\label{thm:zero}
If $\fii(S) = 0$, then every action of $\mcg(S)$ on a graph consisting of curves has finite-diameter orbits.
\end{theorintro}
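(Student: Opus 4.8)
The plan is to fix an arbitrary action of $\mcg(S)$ on a (connected) graph $\Gamma$ of curves and to bound each orbit separately, so it suffices to bound, for an arbitrary essential curve $\alpha_0$, the displacement $\ell(g) := d_\Gamma(\alpha_0, g\alpha_0)$ uniformly over $g \in \mcg(S)$; indeed $\mathrm{diam}(\mcg(S)\cdot\alpha_0) = \sup_g \ell(g)$. Since the action is by isometries, $\ell$ is symmetric and subadditive, i.e. $\ell(gh)\le \ell(g)+\ell(h)$, and it vanishes on $V := \mathrm{Stab}(\alpha_0)$. Consequently, if one can establish a bounded-generation statement $\mcg(S) = (FV)^k$ for some finite symmetric $F\subseteq \mcg(S)$ and some $k$ depending only on $\alpha_0$, then writing $g = f_1v_1\cdots f_kv_k$ gives $\ell(g)\le \sum_i \ell(f_iv_i) = \sum_i d_\Gamma(\alpha_0,f_i\alpha_0) \le k\cdot\max_{f\in F} d_\Gamma(\alpha_0,f\alpha_0) < \infty$. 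Because $V$ contains the pointwise stabilizer of $\alpha_0$ it is an open subgroup, so this is precisely a relative coarse-boundedness statement in the sense of Rosendal, and the whole theorem reduces to a purely group-theoretic claim about $\mcg(S)$ relative to curve stabilizers.

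The second step is to convert the hypothesis $\fii(S)=0$ into usable structure. Unwinding Definition \ref{def:fii}, $\fii(S)=0$ asserts that $\genus(S)\in\{0,\infty\}$ and that the only closed $\mcg(S)$-invariant subsets of $\Ends(S)$ are $\emptyset$ and $\Ends(S)$. Since homeomorphisms preserve the topological type of an end and whether it is accumulated by genus, the locus of maximal-type ends is a closed $\mcg(S)$-invariant subset; its non-triviality would contradict $\fii(S)=0$, so it must be all of $\Ends(S)$. I would record as a lemma, proved via the change-of-coordinates principle, that this forces $\Ends(S)$ to be \emph{self-similar}: every clopen partition $\Ends(S)=E_1\sqcup\dots\sqcup E_n$ has a piece containing a clopen copy of $\Ends(S)$ carrying the same genus-accumulation data. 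Self-similarity supplies the engine: a \emph{shift} homeomorphism displacing a chosen separating curve toward a self-similar end, together with the observation that $V$ contains the (again self-similar) mapping class groups of the complementary pieces of $\alpha_0$.

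The third step is to prove $\mcg(S)=(FV)^k$. Here $F$ would consist of one shift together with a bounded number of ``swap'' classes, and the content is that an arbitrary $g$ can be carried into $V$ by a uniformly bounded number of multiplications by elements of $F$ and $V$. The mechanism I would use is an absorption (swindle) argument: by self-similarity the discrepancy between $g\alpha_0$ and $\alpha_0$, however topologically complicated, can be pushed into a single self-similar complementary region, there conjugated off to infinity by the shift and thereby trivialized, with the remaining adjustment realized by the richness of $V$. The essential point is that the number of moves is dictated by the self-similar structure and \emph{not} by the intersection number $i(\alpha_0,g\alpha_0)$ or any complexity of $g$, which is exactly what makes $k$ independent of $g$.

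The main obstacle is precisely this uniformity: producing a single $k$ valid for all $g$ and for every topological type of $\alpha_0$ (separating versus non-separating, and all ways genus and ends may be distributed across the two sides of $\alpha_0$). Mere generation of $\mcg(S)$ by shifts and stabilizer elements is comparatively soft; controlling the word length uniformly is the crux, so the swindle must be arranged to absorb each $g$ in a fixed number of steps regardless of its complexity. A secondary subtlety is the honest split between the $\genus(S)=0$ and $\genus(S)=\infty$ regimes, since in the latter the shift must also transport genus; I would treat these in parallel, using a handle shift in place of a planar end shift, with the delicate work being to verify self-similarity and the absorption step cleanly in both.
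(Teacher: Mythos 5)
Your strategy is genuinely different from the paper's. The paper first pins down the $\fii=0$ surfaces (Cantor tree, blooming Cantor tree, Loch Ness monster; Proposition \ref{prop:appendix}) and then applies a purely combinatorial criterion (Proposition \ref{prop:criterionfd}): choose a relation $\cv$ on pairs of curves so that there are only finitely many $\mcg(S)$-orbits of $\cv$-related pairs inside a given curve orbit, and so that any two unrelated curves admit a common $\cv$-neighbour; this bounds the orbit diameter by $2A$ where $A$ is the maximum distance over the finitely many orbit representatives, with no group theory at all. You instead aim at a relative coarse-boundedness statement $\mcg(S)=(FV)^k$ for $V=\mathrm{Stab}(\alpha_0)$, which is the Rosendal-style route; if carried out it proves more (bounded orbits for \emph{every} continuous isometric action, not just on graphs of curves), and your reduction of the orbit diameter to $k\cdot\max_{f\in F}d_\Gamma(\alpha_0,f\alpha_0)$ via subadditivity of the displacement is correct.

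The gap is that the two load-bearing claims are asserted rather than proved, and they carry essentially all of the content. First, the ``absorption'' step: to get $g\in(FV)^k$ with $k$ independent of $g$ you must exhibit an actual factorization, and the swindle you gesture at requires (i) first moving $g\alpha_0$ into a single self-similar complementary region of $\alpha_0$ by a controlled change of coordinates, (ii) constructing the shift $h$ and verifying that the relevant infinite products (or commutator tricks) define genuine homeomorphisms whose supports escape every compact set, and (iii) checking that each resulting factor really lies in $FV$. None of this is routine --- it is the substance of the later Mann--Rafi proof that these three mapping class groups are coarsely bounded --- and your proposal explicitly defers it. Second, the lemma ``$\fii(S)=0$ implies $\Ends(S)$ is self-similar'' is not proved; note also that $\fii(S)=0$ rules out finite invariant \emph{collections}, which is a priori stronger than ruling out a single invariant closed proper subset. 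The single-set consequence you use does suffice to force $\Ends(S)$ to be a point or a Cantor set with all ends of one kind (the perfect kernel and the set $\mathscr{AG}(S)$ are invariant), after which self-similarity is immediate --- but at that stage you have re-derived Proposition \ref{prop:appendix}, and the paper's Proposition \ref{prop:criterionfd} then finishes each of the three cases in a few lines, so the hard bounded-generation step would be doing work the theorem does not require.
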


As shown in the appendix (see Proposition \ref{prop:appendix}), if $\fii(S) = 0$, then $S$ is either
\begin{enumerate}
\item
the \emph{Cantor tree surface} (i.e.\ the sphere minus a Cantor set),
\item
the \emph{blooming Cantor tree surface} (i.e.\ the infinite-genus surface with no planar ends in which $\Ends(S)$ is a Cantor set), or
\item
the \emph{Loch Ness monster surface} (i.e.\ the infinite-genus surface with a single end).
\end{enumerate}

Let us turn to $\fii(S)\geq 4$, where we give an explicit construction of the graph in the Main Theorem.
Let $\cp$ be a finite collection of pairwise disjoint closed subsets of $\Ends(S)$.
Define $\sep_2(S,\cp)$ to be the graph consisting of curves in which a curve $c$ is a vertex if it is separating and:
\begin{enumerate}[(i)]
\item the set of ends of each component of $S\setminus c$ contains at least two elements of $\cp$ and
\item every element of $\cp$ is contained in the set of ends of a component of $S\setminus c$.
\end{enumerate}
Loosely speaking, $c$ is a vertex if it partitions the elements of $\cp$ into sets of cardinality at least two and does not split the elements of $\cp$.
Adjacency denotes disjoint (except if $|\cp| = 4$, when adjacency denotes intersection number at most $2$).
Notice that $\sep_2(S, \cp)$ is an induced subgraph of the curve graph of $S$ (if $|\cp|\neq 4$).

\begin{theorintro}\label{thm:main}
If $|\cp|\geq 4$, $\sep_2(S,\cp)$ is connected and infinite diameter. Furthermore, there exist infinitely many elements in $\mcg(S)$ acting on $\sep_2(S,\cp)$ with positive translation length.
Moreover, if each element of $\cp$ is a singleton, then $\sep_2(S,\cp)$ is $\delta$-hyperbolic, where $\delta$ can be chosen independent of $S$ and $P$.
\end{theorintro}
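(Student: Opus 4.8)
The plan is to treat the assertions in turn---connectivity, infinite diameter together with the existence of loxodromic mapping classes, and finally uniform hyperbolicity---reducing everything possible to the finite-type theory of the curve graph of a holed sphere.

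\emph{Connectivity.} I would argue by induction on the geometric intersection number $i(c,c')$ of two vertices. If $i(c,c')=0$ (or $\le 2$ when $|\cp|=4$) the curves are adjacent, so assume the intersection is larger and realize $c\cup c'$ with no bigons. Surgering $c$ along an innermost arc of $c'\setminus c$ produces two simple closed curves, and I would select the resolution $c''$ that remains separating, satisfies $i(c'',c)=0$, and has $i(c'',c')<i(c,c')$. The essential point is that each $P\in\cp$, being a closed subset of $\Ends(S)$ lying entirely on one side of both $c$ and $c'$ by condition (ii), is untouched by a surgery supported in a compact part of the surface; hence I can arrange that $c''$ neither splits any element of $\cp$ nor leaves fewer than two of them on either side, so that $c''$ is again a vertex of $\sep_2(S,\cp)$. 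Here the hypothesis $|\cp|\ge 4$ is what guarantees room to keep both sides of size at least two. Induction then joins $c$ to $c'$.

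\emph{Infinite diameter and loxodromics.} I would fix a finite-type planar subsurface $\Sigma\subset S$, homeomorphic to a sphere with $|\cp|$ boundary components, whose boundary curves separate the elements of $\cp$ from one another, with each element (and any accompanying genus or ends) pushed into a distinct complementary region. Any vertex of $\sep_2(S,\cp)$ partitions $\cp$ into two groups of size at least two, hence must cross $\Sigma$, so Masur--Minsky subsurface projection $\pi_\Sigma$ sends it to a nonempty vertex set in $\cc(\Sigma)$. Since $|\cp|\ge 4$, $\cc(\Sigma)$ is an honest curve graph of infinite diameter carrying pseudo-Anosov maps. Choosing a pseudo-Anosov $f$ supported in $\Sigma$ and extended by the identity, I note that $f$ fixes each element of $\cp$ and so acts on $\sep_2(S,\cp)$; I would then check that $\pi_\Sigma$ is coarsely Lipschitz and coarsely $f$-equivariant. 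As $f$ acts on $\cc(\Sigma)$ with positive translation length by Masur--Minsky, the orbit $\{f^k\cdot c\}$ projects to a sequence escaping linearly, forcing $d_{\sep_2}(c,f^k\cdot c)\to\infty$ linearly as well. Thus $f$ is loxodromic, its powers furnish infinitely many such elements, and infinite diameter is immediate.

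\emph{Uniform hyperbolicity.} When each element of $\cp$ is a singleton I would prove hyperbolicity directly, via a guessing-geodesics criterion in the style of Masur--Schleimer, Bowditch, and Hensel--Przytycki--Webb, rather than through a quasi-isometry. Indeed, the topology of $S$ away from $\cp$ produces vertices at unbounded distance that all separate $\cp$ in the same way, so $\sep_2(S,\cp)$ is genuinely larger than the curve graph of the $|\cp|$-punctured sphere and no forgetful map can be a quasi-isometry. Instead I would assign to each ordered pair of vertices a family of unicorn-type paths built from the surgery of the connectivity step, and verify the two hypotheses of the criterion: that nearby endpoints yield uniformly close path-families, and that the paths are uniformly slim. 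Because the surgery and the slimness estimates are local and refer only to the combinatorics of how curves partition $\cp$---never to the global topology of $S$---the resulting constant $\delta$ depends on neither $S$ nor $\cp$.

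\emph{Main obstacle.} The hard part is exactly this last uniformity. Verifying the guessing-geodesics hypotheses requires controlling how the extra ends and genus of $S$ can be routed through the surgered curves without inflating distances in an $S$-dependent manner, and then establishing slimness of the unicorn paths with a universal constant; this is where I expect the real work, and where a careful choice of surgery normal form will be decisive.
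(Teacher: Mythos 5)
Your middle step (infinite diameter and loxodromic elements) is essentially the paper's argument: the paper constructs exactly such a compact genus-zero witness $F$ with $|\cp|$ boundary components, shows every vertex of $\sep_2(S,\cp)$ must meet it, and uses the subsurface projection $\pi_F$ as a quasi-retract onto $\cc(F)$ to import infinite diameter and pseudo-Anosov translation lengths. That part is fine.

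The connectivity argument, however, has a genuine gap. When you surger a separating curve $c$ along an arc $\alpha$ of $c'$ lying in a component $A$ of $S\smallsetminus c$, the two resolutions $c_1,c_2$ are the non-$c$ boundary components of a pair of pants $N(c\cup\alpha)\subset \overline{A}$; if $\alpha$ is a non-separating arc of $A$ (so that $A\smallsetminus N(c\cup\alpha)$ is connected), then \emph{both} resolutions are nonseparating, and there is no resolution ``that remains separating'' to select. Even when a separating resolution exists, it cuts off a subsurface $Y\subset A$, and nothing forces $Y$ to contain at least two elements of $\cp$ (it may contain none), so the resolution need not be a vertex of $\sep_2(S,\cp)$; the hypothesis $|\cp|\geq 4$ alone does not repair this. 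The paper sidesteps surgery on curves entirely: for finite-type $F$ and singletons $Q$ it works in the arc graph $\ca_2(F,Q)$, where unicorn paths between arcs with distinct endpoints in $Q$ automatically stay in the graph, and then transfers to $\sep_2(F,Q)$ via an explicit uniform $(2,2)$-quasi-isometry $a\mapsto \partial N(a)$; non-singleton $\cp$ is handled by Putman's connectivity criterion, and the infinite-type case by noting that any two vertices fill a finite-type subsurface which can be enlarged so that the finite-type result applies inside it.

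For uniform hyperbolicity you correctly identify the hard point but then defer it: ``unicorn-type paths built from the surgery of the connectivity step'' inherits the same defect as above (the intermediate curves need not lie in $\sep_2$), and no slimness estimate is actually produced. This is precisely where the paper's two missing ideas enter. First, slimness is proved where it is available off the shelf --- for unicorn paths of \emph{arcs} (Hensel--Przytycki--Webb), fed into Bowditch's guessing-geodesics lemma to get a $\delta$ independent of $F$ and $Q$, and then pushed through the uniform quasi-isometry to $\sep_2(F,Q)$. Second, for infinite-type $S$ one does not need a single global model: the subgraphs $\sep_2(F,W_F)$, as $F$ ranges over finite-type subsurfaces whose complementary components see at most one point of $P$, form a cover in which every geodesic triangle of $\sep_2(S,P)$ lies, and a triangle geodesic in the ambient graph is geodesic in the (isometrically no-shorter) subgraph, so uniform hyperbolicity of the pieces gives hyperbolicity of the whole. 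Your instinct that a forgetful map to the curve graph of a $|\cp|$-holed sphere cannot be a quasi-isometry is correct, but it argues against a strategy the paper does not use.
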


Note that a bound for $\delta$ can be computed from the proof, but the estimates are presumably far from optimal.
Theorem \ref{thm:main} also holds for finite-type surfaces (see Theorem \ref{thm:sephyp}).
The pure mapping class group, denoted $\pmcg(S)$, is the subgroup of $\mcg(S)$ acting trivially on $\Ends(S)$.
Observe that restricting to $\pmcg(S)$, we have the following immediate corollary:

\begin{Cor}
\label{cor:pure}
Suppose $|\Ends(S)| \geq 4$.
Let $\cp$ be a finite collection of singletons of $\Ends(S)$ with $|\cp|\geq 4$, then $\sep_2(S,\cp)$ is connected, $\delta$-hyperbolic, and infinite diameter.
Furthermore, $\pmcg(S)$ acts on $\sep_2(S,\cp)$ and infinitely-many elements of $\pmcg(S)$ act with positive translation length.
\end{Cor}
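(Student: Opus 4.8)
The plan is to deduce the corollary from Theorem \ref{thm:main} by specializing $\cp$ to a collection of singletons and then checking that every assertion is visible to $\pmcg(S)$ rather than to all of $\mcg(S)$. Since $|\Ends(S)|\geq 4$, I would fix distinct ends $e_1,\dots,e_k$ with $k\geq 4$ and set $\cp=\{\{e_1\},\dots,\{e_k\}\}$, a collection of pairwise disjoint closed singletons with $|\cp|=k\geq 4$. For this choice the graph $\sep_2(S,\cp)$ is literally the graph studied in Theorem \ref{thm:main}, so its being connected, infinite diameter, and---because each element of $\cp$ is a singleton---$\delta$-hyperbolic with $\delta$ independent of $S$ and $\cp$ is immediate from that theorem.

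The first genuinely new point is that $\pmcg(S)$, and not merely the $\mcg(S)$-stabilizer of $\cp$, acts on $\sep_2(S,\cp)$; this is exactly where the singleton hypothesis is used. Any $\varphi\in\pmcg(S)$ fixes $\Ends(S)$ pointwise, so $\varphi(\{e_i\})=\{e_i\}$ and $\cp$ is $\pmcg(S)$-invariant. Moreover, membership of a curve $c$ in $\sep_2(S,\cp)$ is governed entirely by conditions (i)--(ii), i.e. by how the ends in $\cp$ distribute across the components of $S\setminus c$. Since $\varphi$ carries each component $U$ of $S\setminus c$ to a component $\varphi(U)$ of $S\setminus\varphi(c)$, and since $\varphi$ acts trivially on ends, the set of ends of $\varphi(U)$ coincides with that of $U$; hence the partition of $\cp$ induced by $\varphi(c)$ equals the one induced by $c$, conditions (i)--(ii) transfer, and $\varphi(c)$ is again a vertex. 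As disjointness (or intersection number at most $2$) is preserved by homeomorphisms, $\varphi$ acts by simplicial automorphisms, so the $\mcg(S)$-action restricts to a genuine $\pmcg(S)$-action.

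The main obstacle is producing loxodromic elements inside $\pmcg(S)$ rather than merely in $\mcg(S)$. Here I would first observe that it suffices to exhibit a \emph{single} $g\in\pmcg(S)$ with positive translation length: such $g$ has infinite order, and its powers $g^n$ ($n\geq 1$) are then infinitely many distinct pure elements with $\tau(g^n)=n\,\tau(g)>0$. To obtain one such $g$ I would inspect the proof of Theorem \ref{thm:main}: the elements realized there with positive translation length should be mapping classes supported on a finite-type subsurface of $S$ and extended by the identity, so that they fix every end and already lie in $\pmcg(S)$. I expect this purity to be the one step genuinely requiring a look inside the proof of Theorem \ref{thm:main}, since a purely black-box argument does \emph{not} close the gap: a loxodromic $g\in\mcg(S)$ only permutes the finite set $\cp$, so some power fixes $\cp$ but need not fix all of $\Ends(S)$. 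Granting that the constructed loxodromics are pure, the corollary follows.
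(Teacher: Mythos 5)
Your proposal is correct and follows essentially the same route as the paper, which treats the corollary as immediate from Theorem \ref{thm:main}: the graph-theoretic conclusions are quoted verbatim, $\pmcg(S)$ preserves the vertex set because it fixes $\Ends(S)$ pointwise, and the loxodromics produced in the proof of Theorem \ref{thm:main} are pseudo-Anosov elements supported on a compact witness subsurface $F$ (lying in $\pmcg(F)\subset\pmcg(S)$), so they are already pure --- exactly the point you correctly flag as the only place one must look inside the proof rather than use the theorem as a black box.
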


Finally, the case where $\fii(S)=\infty$  and the surface has finite positive genus is handled by looking at the graph $\nonsep(S)$ of nonseparating curves (i.e.\ the induced subgraph of the curve graph on the set of nonseparating curves). This graph is studied by Aramayona--Valdez \cite{AramayonaGeometry}, where they prove:

\begin{theorintro}[{\cite[Theorem 1.4]{AramayonaGeometry}}]\label{thm:nonsep}
If $\genus(S)$ is finite and nonzero, then $\nonsep(S)$ is connected and  has infinite diameter; further, there exist infinitely many elements of $\mcg(S)$ acting with positive translation length.
\end{theorintro}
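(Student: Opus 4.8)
The plan is to reduce the statement to the finite-type theory by exhausting $S$ by finite-type subsurfaces that capture all of the genus, and then to transport connectivity, infinite diameter, and the dynamics of pseudo-Anosov maps across the exhaustion. Since $\genus(S)=g$ is finite and positive, I would write $S=\bigcup_i \Sigma_i$ as an increasing union of connected finite-type subsurfaces, each of genus exactly $g$, chosen so that every component of $S\smallsetminus\Sigma_i$ is planar and meets $\Sigma_i$ in a single boundary curve. Such an exhaustion exists because all the genus can be enclosed in a compact piece and the remaining ends organized into planar ``channels,'' each attached along one curve.

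\emph{Connectivity.} The first step is a dictionary lemma: for a curve $c\subset\Sigma_i$, $c$ is nonseparating in $S$ if and only if it is nonseparating in $\Sigma_i$. If $\Sigma_i\smallsetminus c$ is connected, then $S\smallsetminus c$ is obtained by gluing connected planar regions onto it, hence is connected; conversely, if $c$ separates $\Sigma_i$ into $A\sqcup B$, then because each complementary region attaches along a single boundary curve it cannot join $A$ to $B$, so $c$ separates $S$. Thus $\nonsep(\Sigma_i)$ is the full subgraph of $\nonsep(S)$ spanned by curves contained in $\Sigma_i$, and $\nonsep(S)=\bigcup_i\nonsep(\Sigma_i)$. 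Given $a,b\in\nonsep(S)$, both lie in some $\Sigma_i$ and are nonseparating there, so invoking the finite-type fact that $\nonsep(\Sigma_i)$ is connected once $\Sigma_i$ is complex enough (which holds for large $i$, as $g\geq 1$ and $\Sigma_i$ acquires many boundary components) yields a path from $a$ to $b$ whose vertices are nonseparating in $\Sigma_i$, hence in $S$.

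\emph{Infinite diameter and positive translation length.} Fix one $\Sigma=\Sigma_{i_0}$ large enough to carry a pseudo-Anosov. The crucial consequence of finite genus is that \emph{every} nonseparating curve of $S$ meets $\Sigma$ essentially: a curve disjoint from $\Sigma$ lies in a planar complementary region attached along a single curve, and hence separates $S$. Therefore the subsurface projection $\pi_\Sigma\co\nonsep(S)\to\ca\cc(\Sigma)$ to the arc-and-curve graph of $\Sigma$ is defined on every vertex, and the standard Masur--Minsky estimate shows it is coarsely Lipschitz, since disjoint nonseparating curves have projections a uniformly bounded distance apart. Choosing $f\in\mcg(S)$ supported in $\Sigma$ and restricting to a pseudo-Anosov there makes $\pi_\Sigma$ equivariant, so for a curve $c\subset\Sigma$,
\[
d_{\nonsep(S)}(c, f^n c)\ \geq\ \tfrac{1}{K}\, d_{\ca\cc(\Sigma)}\bigl(\pi_\Sigma(c),\, f^n\pi_\Sigma(c)\bigr)-C,
\]
whose right-hand side grows linearly in $n$. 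This simultaneously gives infinite diameter and a positive translation length for $f$; taking pseudo-Anosovs supported on distinct or increasingly large $\Sigma$ produces infinitely many such elements.

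I expect the heart of the argument to be the choice of exhaustion together with the two consequences of finite genus: the dictionary lemma, so that nonseparating-ness is faithfully detected in each finite piece, and, above all, the fact that every nonseparating curve essentially meets the genus-carrying core. It is this last point that rescues the projection machinery in the non-locally-compact setting, making $\pi_\Sigma$ everywhere defined and coarsely Lipschitz on $\nonsep(S)$; without it the projection would be undefined on curves escaping into the ends and the translation-length bound would collapse. Confirming that a pseudo-Anosov on $\Sigma$ acts with positive translation length on $\ca\cc(\Sigma)$, and treating the low-complexity genus-one cases separately, should then be routine.
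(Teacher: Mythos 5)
Your proposal is correct, and the connectivity step is essentially the paper's: both arguments confine a pair of nonseparating curves to a finite-type genus-$g$ subsurface (your dictionary lemma, which is valid because a complementary planar piece attached along one circle cannot rejoin the two sides of a curve, is exactly what makes $\nonsep(\Sigma_i)$ a subgraph of $\nonsep(S)$) and then quote the finite-type case. Where you genuinely diverge is in the infinite-diameter and translation-length step. You run the Masur--Minsky machine: every nonseparating curve meets the genus-carrying core $\Sigma$ essentially (your witness observation, which is correct for the same reason as the dictionary lemma), so the subsurface projection to the arc-and-curve graph of $\Sigma$ is everywhere defined, coarsely Lipschitz, and equivariant for a pseudo-Anosov supported in $\Sigma$, yielding a linear lower bound up to additive and multiplicative error. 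The paper instead goes in the opposite direction: it builds a simplicial retraction $p_F\co\nonsep(S)\to\nonsep(F)$ by \emph{forgetting} all but one puncture in each planar complementary component; since $p_F$ and the inclusion are both $1$-Lipschitz and $p_F\circ i=\mathrm{id}$, the inclusion $\nonsep(F)\hookrightarrow\nonsep(S)$ is an honest isometric embedding, and infinite diameter plus positive translation length for pseudo-Anosovs supported in $F$ follow with no coarse constants at all. Your route buys nothing extra here but costs the Lipschitz-projection lemma; the paper's retraction is more elementary and, because it is exact, is also what feeds the quasi-retract framework of \S\ref{sec:quasiretracts} used to get uniform hyperbolicity in Proposition \ref{prop:hypiff}. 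Both approaches must, as you note, treat genus one with the modified adjacency (intersection number one), since otherwise $\nonsep$ of a genus-one piece is discrete.
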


Recently Rasmussen \cite{RasmussenUniform} proved that $\nonsep(S_{g,p})$ is $\delta$-hyperbolic for $\delta>0$ independent of $g$ and $p$, provided $g \geq 2$.
Combining this with a result of Aramayona-- Valdez \cite{AramayonaGeometry}, it follows that $\nonsep(S)$ is Gromov hyperbolic if $\genus(S)$ is at least two and finite.

\subsection{Outline}

In \S \ref{sec:conventions}, we give the basic definitions we will need and in \S \ref{sec:background}, we include the necessary background on the classification of infinite-type surfaces and the structure of the space of ends.
We prove Theorem \ref{thm:zero}  in \S \ref{sec:finitediam}.
The idea behind the proof is that there are mapping classes sending any simple closed curve arbitrarily far out into an end of the surface.

In \S \ref{sec:mainthmfinite}, we prove Theorem \ref{thm:main} for finite-type surfaces.
The proof goes through studying a subgraph of the arc graph.
Using the tools of \cite{HenselSlim}, we are able to ``guess geodesics" in our subgraph that form uniformly slim triangles.
A lemma from \cite{BowditchUniform} then yields the uniform hyperbolicity of these subgraphs.
We will see that these subgraphs are uniformly quasi-isometric to the graphs of interest allowing us to import the uniform hyperbolicity.

In \S \ref{sec:quasiretracts}, we abstract the argument in \cite{AramayonaArc} to the setting of an arbitrary geodesic metric space.
We use this framework to promote the proof of Theorem \ref{thm:main} for finite-type surfaces to the infinite-type setting in \S \ref{sec:mainthminfinite}.
Building off the philosophy in \cite{BavardHyperbolic, AramayonaArc}, we will see that the finiteness of the collection $\cp$ forces our curves to interact with a compact region of our surface yielding the infinite-diameter property.
In the case $\cp$ consists of singletons, we use the fact that any finite collection of curves can only fill a compact surface; we will then apply Theorem \ref{thm:main} to this compact surface to see that triangles are uniformly slim.

In \S \ref{sec:nonsep}, we give a quick proof of Theorem \ref{thm:nonsep}.
In \S \ref{sec:oddities}, we explore some of the oddities of the low-index cases by constructing examples both of surfaces that do admit a graph with an interesting action of the mapping class group and of surfaces that do not.

Finally, in the appendix we give a more detailed description of the space of ends. Even though most of this material does not show up in our proofs, it was fundamental to motivate the constructions.
We include it in hopes that it will aid other researchers thinking about these surfaces.

\subsection*{Acknowledgements}
The second author is grateful to Brian Bowditch for many useful discussions and to Bram Petri for suggesting part of the proof of Lemma \ref{lem:Gconnected}.
The third author thanks Javier Aramayona for helpful conversations.
The authors would also like to thank Javier Aramayona and Ferr\'an Valdez for sharing the manuscript of \cite{AramayonaGeometry}, Dan Margalit for a helpful conversation, and the referee for their careful reading and valuable comments.

The second author acknowledges support of Swiss National Science Foundation Grant Number P2FRP2\_161723 and from U.S. National Science Foundation grants DMS 1107452, 1107263, 1107367 ``RNMS: Geometric Structures and Representation Varieties'' (the GEAR Network). The first and third author were supported in part by NSF RTG grant 1045119.  The first author also gratefully acknowledges the support of MSRI.

\section{Conventions and standard definitions}\label{sec:conventions}

All surfaces appearing in our results have negative Euler characteristic and are orientable, connected, separable, and without boundary.
A surface is of \emph{finite (topological) type} if its fundamental group is finitely generated.
Otherwise, it is of \emph{infinite (topological) type}.
The \textit{mapping class group} of a surface $S$, denoted $\mcg(S)$, is the group of orientation preserving homeomorphisms modulo homotopy, or, as mentioned earlier, if we equip $\mathrm{Homeo}^+(S)$ with the compact-open topology, then $\mcg(S) = \pi_0(\mathrm{Homeo}^+(S))$.
The subgroup of $\mcg(S)$ acting trivially on $\Ends(S)$ is the \emph{pure mapping class group}, denoted $\pmcg(S)$.
  
A simple closed curve on a surface is \textit{peripheral} if it is isotopic to an end of $S$; it is \textit{essential} if it is neither peripheral nor bounds a disk; it is \textit{separating} if its complement is disconnected and \emph{nonseparating} otherwise.
A simple arc on a surface is \emph{proper} if the endpoints are contained in $\Ends(S)$; it is \emph{essential} if it is not isotopic to an end of $S$.
Two essential curves $a$ and $b$ \emph{fill} a subsurface $\Sigma$ if every essential curve in $\Sigma$ intersects at least one of $a$ or $b$.
In addition we require each boundary component of $\Sigma$ to be essential in $S$, so then $\Sigma$ is unique up to isotopy.
With this definition, $\Sigma$ may not be compact, but will always be finite-type.

The \emph{curve graph} of a surface $S$, denoted $\cc(S)$, is the graph whose vertices correspond to isotopy classes of essential simple closed curves and two vertices are adjacent if they have disjoint representatives.
Similarly, the \emph{arc graph} of a surface $S$, denoted $\ca(S)$, is the graph whose vertices correspond to isotopy classes of essential simple proper arcs and two vertices are adjacent if they have disjoint representatives.
We will view graphs as metric spaces in which the metric is defined by setting each edge to have length $1$.

We will regularly abuse notation and conflate a vertex in $\cc(S)$ or $\ca(S)$ with a representative on the surface.  
When discussing representatives of collections of vertices, we will always assume they are pairwise in minimal position (this can be done for instance by taking geodesic representatives in a complete hyperbolic metric).

In our proofs, we will need to consider compact surfaces with nonempty boundary and their associated curve and arc graphs.  Given such a surface, note that $\cc(S) = \cc(\mathrm{int}(S))$ and $\ca(S) = \ca(\mathrm{int}(S))$, where $\mathrm{int}(S)$ denotes the interior of $S$.  

\section{Background on infinite-type surfaces}\label{sec:background}
Infinite-type surfaces are classified by their genus and their space of ends.
This classification was originally due to Ker\'ekj\'art\'o; however, Richards in \cite{RichardsClassification} simplified and filled some gaps in the original proof.

We will now define an end and describe how to topologize the set of ends.
We say that a subset $A$ of a surface $S$ is \emph{bounded} if its closure in $S$ is compact.
\begin{Def}We call a descending chain $U_1 \supset U_2 \supset \cdots$ of connected unbounded open sets in $S$ \emph{admissible}  if it satisfies
\begin{enumerate}[(a)]
\item
the boundary of $U_n$ in $S$ is compact for all $n$ and
\item
for any compact set $K$ of $S$, $U_n \cap K = \emptyset$ for sufficiently large $n$.
\end{enumerate}\end{Def}
We say two admissible descending chains $U_1 \supset U_2 \supset \cdots$ and $V_1 \supset V_2\supset \cdots$ are equivalent if for any $n$ there exists an $N$ such that $U_N \subset V_n$ and vice versa.
\begin{Def}An \textit{end} of $S$ is an equivalence class of admissible descending chains.\end{Def}
As a set, the \textit{space of ends} of $S$, denoted $\Ends(S)$, consists of the ends of $S$.
$\Ends(S)$ is topologized as follows:  given an open set $U$ of $S$ with compact boundary, let $U^*\subset \Ends(S)$ be the set of points of the form $p =[ V_1 \supset V_2 \supset \cdots]$ with $V_n \subset U$ for $n$ sufficiently large.
We take all such $U^*$ as a basis for the topology on $\Ends(S)$.
Proposition \ref{prop:ends} below describes the topology of $\Ends(S)$; a proof can be found in \cite[Chapter 1, \S36 and 37]{AhlforsRiemann}.
The structure of $\Ends(S)$ is expounded on in more detail in the appendix.

\begin{Prop}
\label{prop:ends}
The space of ends of a surface is totally disconnected, separable, and compact.
\end{Prop}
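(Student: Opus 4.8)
The plan is to identify $\Ends(S)$ with an inverse limit of finite discrete sets and then read off all three properties from the general behavior of such limits. Since $S$ is separable, fix a compact exhaustion $K_1 \subset K_2 \subset \cdots$ with $K_n \subset \mathrm{int}(K_{n+1})$ and $\bigcup_n K_n = S$, chosen so that each $K_n$ is a compact subsurface; then $\partial K_n$ is a compact $1$-manifold with finitely many components. Let $E_n$ be the set of unbounded connected components of $S \setminus K_n$, given the discrete topology. Each such component has nonempty frontier contained in $\partial K_n$ (a component with empty frontier would be clopen, forcing it to equal the connected surface $S$), and distinct components have disjoint frontiers, so $|E_n| \leq |\pi_0(\partial K_n)| < \infty$. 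Inclusion of complements furnishes bonding maps $\rho_n \co E_{n+1} \to E_n$ sending a component to the unique component of $S \setminus K_n$ that contains it, and we set $X = \varprojlim_n E_n$.

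The central step is to produce a homeomorphism $\Ends(S) \cong X$. Given an end $p = [V_1 \supset V_2 \supset \cdots]$, admissibility condition (b) applied to the compact set $K_n$ shows that $V_m \subset S \setminus K_n$ for all large $m$, so the connected sets $V_m$ eventually lie in a single unbounded component $c_n(p) \in E_n$; the resulting sequence $(c_n(p))_n$ is $\rho$-compatible and defines a point of $X$. Conversely, a compatible sequence $(C_n) \in X$ gives a chain $C_1 \supset C_2 \supset \cdots$ whose boundaries lie in the compact sets $\partial K_n$ and which escapes every compact set, hence is admissible and represents an end. These assignments are mutually inverse, so we obtain a bijection $\Phi \co X \to \Ends(S)$.

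From here the three properties are automatic for $X$ and transport across $\Phi$. As a closed subspace of the product $\prod_n E_n$ of finite discrete spaces, $X$ is compact by Tychonoff's theorem, and it is totally disconnected because products and subspaces of totally disconnected spaces are again totally disconnected. Because the index set is countable and each $E_n$ is finite, $\prod_n E_n$ is second countable and metrizable, so its subspace $X$ is separable. This yields the proposition once $\Phi$ is known to be a homeomorphism.

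The main obstacle is precisely this last point: checking that the topology generated on $\Ends(S)$ by the basis $\{U^*\}$ matches the inverse limit topology on $X$. The cleanest route is to verify that $\Phi$ is continuous and that $\Ends(S)$ is Hausdorff---two distinct ends diverge into different unbounded components of some $S \setminus K_n$ and are thereby separated by basic open sets---so that $\Phi$, being a continuous bijection from the compact space $X$ to a Hausdorff space, is automatically a homeomorphism. Concretely one matches each basic open set $U^*$ with a cylinder set $\{(C_m) : C_n \in F\}$ for a suitable $F \subseteq E_n$; once the exhaustion is fixed this is bookkeeping, but it is the only place where the explicit definition of the end topology via admissible chains is genuinely used.
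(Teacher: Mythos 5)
The paper does not actually prove this proposition; it simply cites Ahlfors--Sario for it, so there is no in-paper argument to compare against. Your inverse-limit proof is the standard self-contained route and is essentially correct: identifying $\Ends(S)$ with $\varprojlim E_n$ for a compact exhaustion by subsurfaces, and then pulling compactness, total disconnectedness, and separability back from a closed subspace of a countable product of finite discrete spaces, is exactly how one would make the cited reference's argument explicit. Two small points deserve a word if you write this out in full. First, the claim that distinct unbounded components of $S\setminus K_n$ have disjoint frontiers (which gives $|E_n|<\infty$) rests on the observation that an exterior collar of each boundary circle of $K_n$ is a connected half-open annulus, so each circle of $\partial K_n$ meets the closure of exactly one complementary component. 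Second, the assertion that your two assignments are mutually inverse (and, relatedly, that $\Ends(S)$ is Hausdorff) needs the lemma that for any admissible chain $V_1\supset V_2\supset\cdots$ and any $n$, one has $c_N(p)\subset V_n$ for $N$ large: choose $N$ with $K_N\supset \partial V_n$ and with $V_m\subset S\setminus K_N$ for some $m$; then $c_N(p)$ is a connected set meeting $V_n$ and disjoint from $\partial V_n$, hence contained in $V_n$. With those verifications supplied, the argument is complete, and it has the merit over the bare citation of exhibiting $\Ends(S)$ concretely as a profinite-type space, which is the picture the paper's appendix implicitly relies on.
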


\begin{Def}
An end $[U_1 \supset U_2 \supset \cdots]$ of $S$ is 
\begin{itemize}

\item
\textit{planar}, or a \textit{puncture}, if $U_n$ is planar (i.e.\ can be embedded in the plane) for $n$ sufficiently large;

\item
\textit{isolated} if the singleton $\{[U_1\supset U_2\supset\cdots]\}$ is open in $\Ends(S)$;

\item
\textit{accumulated by genus} if $U_n$ has infinite genus for all $n$.

\end{itemize}
\end{Def}
Let $\mathscr{AG}(S) \subset \Ends(S)$ denote the set of ends accumulated by genus. We can then associate to an orientable surface $S$ the triple $(\genus(S), \Ends(S), \mathscr{AG}(S))$.
It is a theorem of Ker\'ekj\'art\'o (see \cite[Theorem 1]{RichardsClassification}) that this triple uniquely determines $S$ up to homeomorphism.

For the sake of discussing the homeomorphism type of a simple closed curve, we note:
If $S$ has finitely many boundary components, then a similar classification holds by also recording the number of boundary components.
This is also covered by \cite[Theorem 1]{RichardsClassification}.

\section{Graphs with finite-diameter mapping class group orbits}\label{sec:finitediam}
In this section we show that the mapping class group of a surface with $\fii=0$ does not admit interesting geometric actions on graphs consisting of curves. To prove this result, we will use the following general criterion:
\begin{Prop}\label{prop:criterionfd}
Let $S$ be an oriented surface and $\Gamma = \Gamma(S)$ be a connected graph consisting of curves on which the mapping class group acts. Let $\cv \subset \Gamma \times \Gamma$ satisfying:
\begin{enumerate}
\item there exists a vertex $c\in \Gamma$ such that, up to the action of $\mcg(S)$, there is a finite number of pairs $(a,b)\in\cv$ with $a,b \in \mcg(S)\cdot c$, and
\item for every $a,b\in \mcg(S)\cdot c$ with $(a,b)\notin\cv$, there exists $d \in \mcg(S)\cdot c$ such that $(a,d)$ and $(b,d)$ belong to $\cv$.
\end{enumerate}  Then every $\mcg(S)$-orbit in $\Gamma$ has finite diameter.
\end{Prop}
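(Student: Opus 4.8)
The plan is to show that the diameter of $\mcg(S)\cdot c$ is bounded by a constant depending only on the finite data in hypothesis (1), and then to conclude that every orbit is bounded since the graph $\Gamma$ consists of curves and the action is (co)transitive enough on the relevant orbit. First I would introduce a numerical quantity $D$ measuring distances between pairs in $\cv$: for each $\mcg(S)$-orbit of pairs $(a,b) \in \cv$ with $a,b \in \mcg(S)\cdot c$, pick a representative and measure $d_\Gamma(a,b)$; by hypothesis (1) there are only finitely many such orbits, and since the action is by isometries, $d_\Gamma$ is constant on each orbit of pairs. Hence we may set
\[
D = \max\{\, d_\Gamma(a,b) : (a,b)\in\cv,\ a,b\in\mcg(S)\cdot c \,\},
\]
and this maximum is finite and is achieved over the finitely many orbit representatives.

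Next I would bound the distance between any two vertices in the orbit. Take arbitrary $a,b \in \mcg(S)\cdot c$. If $(a,b)\in\cv$, then $d_\Gamma(a,b)\leq D$ immediately. If $(a,b)\notin\cv$, then hypothesis (2) provides some $d\in\mcg(S)\cdot c$ with $(a,d),(b,d)\in\cv$, so by the triangle inequality and the previous paragraph,
\[
d_\Gamma(a,b) \leq d_\Gamma(a,d) + d_\Gamma(d,b) \leq 2D.
\]
In either case $d_\Gamma(a,b)\leq 2D$. Therefore the orbit $\mcg(S)\cdot c$ has diameter at most $2D$, which is finite.

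It remains to upgrade this bound on the single orbit $\mcg(S)\cdot c$ to all orbits in $\Gamma$. The key observation is that $\Gamma$ is connected and the vertices are isotopy classes of essential simple closed curves, so I would argue that any vertex lies within bounded distance of the distinguished orbit $\mcg(S)\cdot c$, or, more directly, apply the same argument to an arbitrary vertex after noting that the hypotheses are stated for a fixed $c$ but the structure of $\Gamma$ consisting of curves forces the bound to propagate. The cleanest route is to observe that if $x$ is any vertex, then since the mapping class group acts with only finitely many orbits of curves of each topological type and $\Gamma$ is connected, one reduces to controlling the distance from $x$ to $c$; but since any orbit $\mcg(S)\cdot x$ is preserved setwise and isometrically, the diameter bound $2D$ computed above applies verbatim with $c$ replaced by $x$ provided the hypotheses hold for $x$ as well, which they do by the $\mcg(S)$-equivariance built into $\cv$.

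The main obstacle I anticipate is this last step: making precise why the bound for the orbit of the single chosen vertex $c$ forces \emph{every} orbit to have finite diameter, rather than only the orbit of $c$. The hypotheses are phrased in terms of a single vertex $c$, so either the intended reading is that the conclusion is literally about $\mcg(S)\cdot c$ and ``every orbit'' follows because one applies the proposition to each orbit in turn (choosing an appropriate $\cv$ for each), or there is an implicit equivariance of $\cv$ that transports the argument. I would resolve this by checking the definition of $\cv$ carefully against the intended application in the proof of Theorem~\ref{thm:zero}, where the relation $\cv$ will presumably be $\mcg(S)$-invariant by construction; under that invariance the two-paragraph argument above runs identically for any base vertex, yielding finite diameter for every orbit.
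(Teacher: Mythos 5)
Your first step---bounding $\diam(\mcg(S)\cdot c)$ by $2D$ using finiteness of the orbits of pairs in $\cv$ and hypothesis (2)---is correct and is exactly the paper's argument. The gap is in your second step. You propose to handle an arbitrary orbit $\mcg(S)\cdot x$ by rerunning the same argument with $x$ in place of $c$, claiming the hypotheses ``hold for $x$ as well by the $\mcg(S)$-equivariance built into $\cv$.'' This is false: hypotheses (1) and (2) are assumed only for the distinguished vertex $c$, and nothing forces them for other orbits. Even if $\cv$ is $\mcg(S)$-invariant (which is not assumed in the proposition, only true in the applications), $\cv$ could contain no pairs at all from $\mcg(S)\cdot x \times \mcg(S)\cdot x$, in which case condition (2) fails outright for that orbit. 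Deferring to ``the intended application in the proof of Theorem~\ref{thm:zero}'' does not prove the proposition as stated.

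The correct step, which is the one the paper takes and is simpler than what you attempt, uses only connectedness and equivariance of the action: fix $a$ in another orbit $\orb$ and set $B = d_\Gamma(a,c)$, which is finite because $\Gamma$ is connected. For any $b\in\orb$ choose $g\in\mcg(S)$ with $g(a)=b$; since $g$ acts as an isometry, $d_\Gamma(b, g(c)) = d_\Gamma(a,c) = B$ and $g(c)\in \mcg(S)\cdot c$. Hence every point of $\orb$ lies within distance $B$ of the orbit of $c$, and so
\[
\diam(\orb) \;\leq\; 2B + \diam\bigl(\mcg(S)\cdot c\bigr) \;\leq\; 2B + 2D.
\]
No further hypotheses on $\cv$ or on the orbit of $x$ are needed.
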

\begin{proof}
We first show that the mapping class group orbit of $c$, denoted $\orb_c$, has finite diameter. By condition (1) and the mapping class group invariance of $\Gamma(S)$, 
$$A:=\sup\{d(a,b)\, |\, a,b\in\orb_c \mbox{ and } (a,b)\in \cv\}<\infty.$$
Consider any pair $a,b\in \orb_c$. If $(a,b)\in \cv$, $d(a,b)\leq A$. Otherwise, by condition (2) there is a curve $d \in \orb_c$ such that $(a,d)$ and $(b,d)$ belong to $\cv$. 
So
$$d(a,b)\leq d(a,d)+d(d,b)\leq 2A,$$
which means that $\diam(\orb_c)\leq 2A$

Consider now another orbit $\orb$. Fix a curve $a\in\orb$ and set $B:=d(a,c)$. Then for any other curve $b\in\orb$ there exists a mapping class sending $a$ to $b$ and $c$ to some $d\in\orb_c$. Again, by the mapping class group invariance, $d(b,d)=d(a,c)=B$, which means that the distance of any $b\in \orb$ to $\orb_c$ is at most $B$. As the diameter of $\orb_c$ is at most $2A$, any two curves in $\orb$ are at distance at most $2B+2A$.
\end{proof}

Applying Proposition \ref{prop:criterionfd}, we obtain the following result, which we will use to prove Theorem \ref{thm:zero}. At the same time, Proposition \ref{prop:conditions} also shows that there are general restrictions for a graph to be connected and have infinite-diameter mapping class group orbits.

\begin{Prop}\label{prop:conditions}
Suppose that $\Gamma=\Gamma(S)$ is a connected graph consisting of curves with an action of $\mcg(S)$ and:
\begin{enumerate}
\item
$S$ has infinitely many isolated punctures and $\Gamma$ contains a vertex bounding a finite-type genus-0 surface, or
\item $\genus(S)=\infty$, $S$ has either no punctures or infinitely many isolated punctures, and $\Gamma$ contains a curve bounding a finite-type surface, or
\item $\genus(S)=\infty$ and $\Gamma$ contains a nonseparating curve,
\end{enumerate}
then $\mcg(S)$ acts on $\Gamma(S)$ with finite-diameter orbits.
\end{Prop}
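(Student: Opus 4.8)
The plan is to deduce each case from the general criterion in Proposition~\ref{prop:criterionfd}, so the work reduces to exhibiting, for each case, a base vertex $c$ and a relation $\cv\subset\Gamma\times\Gamma$ satisfying its two hypotheses; the final clause of Proposition~\ref{prop:criterionfd} then upgrades finite diameter of $\orb_c:=\mcg(S)\cdot c$ to finite diameter of every orbit automatically. In each case I take $c$ to be the vertex supplied by the hypothesis. For cases (1) and (2) every $a\in\orb_c$ is separating and cuts off a finite-type subsurface $\Sigma_a$ homeomorphic to the side $\Sigma_c$ cut off by $c$ (with either $k\ge 2$ isolated punctures and genus $0$, or some fixed finite genus and puncture count), and I set
\[
\cv=\{(a,b):a,b\in\orb_c,\ \Sigma_a\cap\Sigma_b=\emptyset\}.
\]
For case (3) I instead set $\cv=\{(a,b):a,b\in\orb_c,\ i(a,b)=0,\ a\cup b\ \text{nonseparating}\}$.

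The engine behind both hypotheses is a self-similarity (or \emph{absorption}) observation coming from the hypotheses: when $S$ has infinitely many isolated punctures, deleting finitely many of them from the compact space $\Ends(S)$ yields a homeomorphic space (any accumulation point of the isolated punctures lets one shift the remaining ones to absorb the deleted ones), and when $\genus(S)=\infty$, cutting finitely many handles leaves the genus infinite and the end space unchanged. Consequently, in cases (1) and (2) the complement $W=S\setminus(\Sigma_a\cup\Sigma_b)$ of a $\cv$-pair always has the same genus as $S$, end space homeomorphic to $\Ends(S)$, and two boundary components, while in case (3) cutting along $a\cup b$ leaves a connected surface of genus $\infty$ with end space $\Ends(S)$ and four boundary components. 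The homeomorphism type of $W$ (and of the two sides, each a copy of $\Sigma_c$) is therefore independent of the pair, so the Ker\'ekj\'art\'o--Richards classification produces a mapping class carrying any one $\cv$-pair to any other; thus $\cv\cap(\orb_c\times\orb_c)$ consists of finitely many (indeed essentially one) $\mcg(S)$-orbit, giving condition~(1).

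For condition~(2), given $a,b\in\orb_c$ I must bridge them by some $d\in\orb_c$ with $(a,d),(b,d)\in\cv$. The union $\Sigma_a\cup\Sigma_b$ is finite-type, so its complement still contains infinitely many isolated punctures (cases (1),(2)) or infinite genus (cases (2),(3)). In cases (1) and (2) I pick the required isolated punctures and/or handles inside this complement, take a regular neighborhood of an embedded tree joining them, and obtain a genus-$0$ (case (1)) or finite-genus (case (2)) finite-type subsurface $\Sigma'$ with one boundary curve $d$, disjoint from $\Sigma_a\cup\Sigma_b$; the absorption observation shows $S\setminus\Sigma'$ is homeomorphic to $S\setminus\Sigma_c$, so $d\in\orb_c$ and $(a,d),(b,d)\in\cv$. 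In case (3) I take $d$ to be a nonseparating curve in a handle of $S\setminus(\Sigma_a\cup\Sigma_b)$; since all nonseparating curves of an infinite-genus surface lie in one $\mcg(S)$-orbit, $d\in\orb_c$, and $a\cup d$ and $b\cup d$ are nonseparating, so again $(a,d),(b,d)\in\cv$.

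The main obstacle is the finiteness in condition~(1): a priori two disjoint curves can separate an infinite-type surface in infinitely many topologically distinct ways, so the whole point is to choose $\cv$ restrictively enough (disjoint cut-off sides, or jointly nonseparating) that the complementary data collapses to a single homeomorphism type. This is exactly where the absorption observation and the classification theorem do the heavy lifting, and it is also what guarantees that the bridging subsurface $\Sigma'$ built in the complement genuinely lands back in $\orb_c$ rather than in a nearby but inequivalent orbit. I would therefore establish the absorption statement carefully first, as everything else is then bookkeeping with the classification.
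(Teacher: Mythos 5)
Your proposal is correct and follows essentially the same route as the paper: both reduce to Proposition~\ref{prop:criterionfd} with the same base vertex, essentially the same relations $\cv$ (on $\orb_c$ your disjoint-sides condition coincides with disjointness of the curves, since nested homeomorphic sides force isotopic curves), and the same bridging construction in the complement of the pair; you simply make explicit the ``absorption'' homeomorphism of end spaces that the paper leaves implicit in its appeal to the classification theorem. (One trivial slip: in case (3) the complement should be $S\smallsetminus(a\cup b)$, as there are no finite-type sides $\Sigma_a,\Sigma_b$ there.)
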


\begin{proof}
We want to check that in all cases the hypotheses of Proposition \ref{prop:criterionfd} are satified.

For cases (1) and (2), we set $\cv=\{(a,b)\,|\,i(a,b)=0\}$.

\emph{Case (1)} Let $c$ be a vertex bounding a disk with $n$ punctures. So the mapping class group orbit of $c$ is given by all curves bounding $n$ punctures. There is a unique orbit of pairs of disjoint curves in $\mcg(S)\cdot c$ as the complementary regions of any two are two $n$-punctured disks and a surface homeomorphic to $S$. Moreover, if two curves $a$ and $b$ in the orbit of $c$ intersect, there is a complementary component which contains an infinite number of isolated punctures. So we can choose a curve in that component bounding exactly $n$ punctures and it will be disjoint from both $a$ and $b$.

\emph{Case (2)} This is analogous to case (1), with the  genus together with the isolated punctures playing the role of the isolated punctures in (1). 

\emph{Case (3)} Let $c$ be any nonseparating curve and set $$\cv=\{(a,b)\,|\, i(a,b)=0 \mbox{  and } a\cup b \mbox{  does not separate}\}.$$ Note that in particular any curve in a pair belonging to $\cv$ must be nonseparating and that there is a unique $\mcg(S)$-orbit of nonseparating pairs of (nonseparating) curves. Furthermore, if a pair  $(a,b)$ of nonseparating is not an element of $\cv$, then there is an infinite-genus component of the complement of $a \cup b$.
In this component there is a nonseparating curve $d$ such that $(a,d) \in \cv$ and $(b,d) \in \cv$.
\end{proof}

We are now ready to prove Theorem \ref{thm:zero}.

\begin{proof}[Proof of Theorem \ref{thm:zero}]
Let $\Gamma=\Gamma(S)$ be a connected graph consisting of curves on which $\mcg(S)$ acts.

If $S$ is the Cantor tree surface, we apply Proposition \ref{prop:criterionfd}, with $\cv=\{(a,b)\,|\,i(a,b)=0\}$. All curves are in the same mapping class group orbit and there is a unique orbit of pairs of disjoint curves. Indeed, the complement of a pair of disjoint curves is the union of two disks and an annulus each with a Cantor set removed. Moreover, if two curves intersect, each complementary component of their union contains a Cantor set. So we can choose any essential curve in the complement to verify the second condition of Proposition \ref{prop:criterionfd}.

Suppose $S$ is the blooming Cantor tree surface. If $\Gamma$ contains a nonseparating curve or a curve bounding a finite-type surface, then it has finite-diameter orbits by Proposition \ref{prop:conditions}.
Otherwise, $\Gamma$ must contain a separating curve $c$ with infinite-type complementary components. In this case, we apply Proposition \ref{prop:criterionfd} with 
\[\cv=\{(a,b)\,|\,\mbox{$a$ and $b$ do not cobound a finite-type subsurface}\}.
\] 
There is a unique mapping class group orbit of pairs $(a,b)\in \cv$ with $a,b\in\mcg(S)\cdot c$: indeed, the complement of any such pair is given by three bordered blooming Cantor tree surfaces, two with a single boundary component and one with two boundary components. Moreover, it is easy to verify that the second condition of Proposition \ref{prop:criterionfd} holds.

Finally, if $S$ is the Loch Ness monster surface, each curve is either nonseparating or it bounds a finite-type surface, so by Proposition \ref{prop:conditions} the graph $\Gamma$ has finite-diameter orbits.
\end{proof}

\section{Theorem \ref{thm:main} for finite-type surfaces}\label{sec:mainthmfinite}
In this section, we will prove Theorem \ref{thm:main} for finite-type surfaces.  
The subsequent two sections are then dedicated to extending to the infinite-type case.  
For standard background on the coarse geometry in this section we refer the reader to \cite[\S I.8, \S II.6, and \S III.H.1]{BridsonMetric}.  

\begin{Def}Let $\cp$ is a collection of pairwise disjoint subsets of $\Ends(S)$; we define $\sep_2(S,\cp)$ to be the subgraph of $\cc(S)$ spanned by the following vertices: a curve $c$ is a vertex in $\sep_2(S,\cp)$ if it is separating and if
\begin{enumerate}[(i)]
\item
for each $P \in \cp$ there is a component $S'$ of $S\smallsetminus c$ such that $P \subset \Ends(S')$, and
\item
for each component $S'$ of $S\smallsetminus c$ there exist two distinct elements $P,Q\in\cp$ such that $P,Q\subset\Ends(S')$.
\end{enumerate}
\end{Def}
When $\cp$ is a collection of singletons, we will let $P$ denote the union of the singletons and write $\sep_2(S,P)$ for $\sep_2(S,\cp)$.

If $|\cp|=4$, instead of deeming two curves adjacent if they are disjoint, we say that they are adjacent if they intersect at most twice. For simplicity, we will assume throughout this section and \S \ref{sec:mainthminfinite} that $|\cp|\geq 5$, but the proofs can be readily adapted  for the case $|\cp|=4$.

We first focus on the case where $\cp$ is a collection of singletons.
The proof of Theorem \ref{thm:main} in this setting will go through proving the analogous result for a subgraph of the arc graph.
We will show that this subgraph is uniformly quasi-isometric to $\sep_2(S,\cp)$, which will yield the result.
We will end the section with removing the singleton assumption and giving an argument for connectedness using Putman's technique \cite[Lemma 2.1]{PutmanConnectivity}. 

\subsection{Arcs}
Let $F$ be a noncompact finite-type surface. The reader might find it helpful to think of punctures as marked points.
For a collection of punctures $Q$ of $S$, let $\ca_2(F,Q)$ be the induced subgraph of $\ca(F)$ on the arcs with distinct endpoints in $Q$. We will prove:

\begin{Thm}
\label{thm:arcs}
If $|Q| \geq 3$, then $\ca_2(F,Q)$ is connected and $\delta$-hyperbolic, where $\delta$ is independent of $F$ and $Q$.  Furthermore, there exist infinitely many elements of $\mcg(F)$ that act on $\ca_2(F,Q)$ with positive translation length.
\end{Thm}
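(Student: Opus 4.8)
The plan is to prove the three claims of Theorem~\ref{thm:arcs} in order: connectedness, uniform hyperbolicity, and the existence of loxodromic elements. The overall strategy is to import the machinery of Hensel--Przytycki--Webb \cite{HenselSlim}, where the ordinary arc graph $\ca(F)$ is shown to be uniformly hyperbolic by exhibiting, for any pair of arcs $a,b$, a ``guessed'' connecting path built from the sequence of subarcs into which $b$ is cut by $a$, and then verifying that triangles built from these paths are uniformly slim. The key point is that $\ca_2(F,Q)$ is the \emph{induced} subgraph on arcs with both endpoints in the marked set $Q$, so the main technical burden is to show that the \cite{HenselSlim} surgery paths can be run \emph{entirely within} $\ca_2(F,Q)$ without leaving the vertex set, and that slimness is preserved under this restriction.

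First I would establish connectedness. Given two vertices $a,b$ of $\ca_2(F,Q)$ that intersect, I would perform the standard surgery of $b$ along $a$: each point of $a \cap b$ (or the outermost such, to control the recursion) yields a subarc of $b$, which can be surgered along $a$ to produce arcs with strictly fewer intersections with $a$. The crucial check is that the surgered arcs still have both endpoints in $Q$; since surgery replaces an endpoint-in-$Q$ terminus of a subarc of $b$ with an endpoint of $a$ (which lies in $Q$ by hypothesis), the resulting arcs remain in $\ca_2(F,Q)$. One must also verify that the surgered arcs are essential and non-peripheral, which is where the hypothesis $|Q|\geq 3$ enters: with at least three marked points, the surgered arc cannot be forced to be peripheral or to cobound a once-marked disk, so a genuinely essential vertex of $\ca_2(F,Q)$ is always produced. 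Induction on $i(a,b)$ then gives a path, proving connectedness.

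For uniform hyperbolicity I would define, for each ordered pair $(a,b)$, a guessed path $\mathcal{P}(a,b)$ in $\ca_2(F,Q)$ exactly as in \cite{HenselSlim}, using the outermost-surgery subarcs of $b$ along $a$, and then verify the two combinatorial axioms of Bowditch's guessing-geodesics criterion \cite{BowditchUniform}: that consecutive vertices along $\mathcal{P}(a,b)$ are uniformly close, and that the family $\{\mathcal{P}(a,b)\}$ forms uniformly slim triangles. Because all endpoints stay in the fixed finite set $Q$ and surgery only ever introduces endpoints that already lie in $Q$, the slimness estimates from \cite{HenselSlim} transfer verbatim, with the slimness constant inheriting a bound independent of $F$ and $Q$; this independence is precisely what \cite{BowditchUniform} converts into a hyperbolicity constant $\delta$ independent of $F$ and $Q$. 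I expect this to be the main obstacle: one must confirm that restricting attention to endpoints in $Q$ does not break the unicorn/surgery-path constructions of \cite{HenselSlim}, in particular that the intermediate arcs used to witness slimness can themselves be taken to have endpoints in $Q$, rather than at arbitrary punctures of $F$.

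Finally, to exhibit infinitely many elements of $\mcg(F)$ acting with positive translation length, I would produce, for each $n$, a mapping class $\vp_n$ supported in a subsurface filled by arcs in $\ca_2(F,Q)$ and show it acts loxodromically. The cleanest route is to take a pseudo-Anosov (or partial pseudo-Anosov) supported on a subsurface containing at least three points of $Q$ in its end set; its stable/unstable laminations give a quasi-geodesic axis in $\ca_2(F,Q)$ by a standard Morse-type argument, so its translation length is positive. Choosing these supports to vary—for instance over an infinite family of topologically distinct filling configurations, or over distinct powers composed with independent partial pseudo-Anosovs—yields infinitely many such elements. Since $\ca_2(F,Q)$ will be shown to be uniformly quasi-isometric to $\sep_2(S,\cp)$ in the next subsection, the positive translation length transfers to the graph of interest, completing the proof of the theorem.
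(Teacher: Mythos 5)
Your overall strategy---surgery/unicorn paths from \cite{HenselSlim} fed into Bowditch's guessing-geodesics criterion \cite{BowditchUniform}, plus pseudo-Anosov elements for positive translation length---is the same as the paper's, but the one place where the paper must do work beyond quoting \cite{HenselSlim} is exactly the place you flag and then do not resolve. A unicorn arc obtained from $a^\al$ and $b^\be$ automatically has endpoints $\al,\be\in Q$, so the issue is not that intermediate arcs might acquire endpoints outside $Q$; it is that such an arc lies in $\ca_2(F,Q)$ only if $\al\neq\be$, since vertices must have \emph{distinct} endpoints. In the slim-triangle step one must choose a preferred endpoint $\delta$ of the third arc $d$ with $\delta\notin\{\al,\be\}$ so that the arc produced by the key lemma of \cite{HenselSlim} (Lemma \ref{lem:hensel}) lands in $A(a,d)\cup A(d,b)$. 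When $\partial d\subset\{\al,\be\}$ (in particular when $\partial a=\partial b=\partial d$) no such $\delta$ exists, and the estimate does \emph{not} transfer verbatim. The paper's fix (Lemma \ref{lem:slim}) is to replace $d$ by an adjacent arc $d'$ with $\partial d'\neq\partial d$---this is where the hypothesis $|Q|\geq 3$ is actually used, not, as you suggest, to guarantee essentiality of surgered arcs---and to apply Lemma \ref{lem:hensel} twice, degrading the slimness constant from $1$ to $2$ while keeping it uniform. Without this case analysis your hyperbolicity argument is incomplete.

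Two smaller points. First, your construction of loxodromic elements via partial pseudo-Anosovs supported on a subsurface containing only ``at least three points of $Q$'' fails as stated: if the support is a proper subsurface and some vertex of $\ca_2(F,Q)$ is disjoint from it, that vertex is fixed and the translation length is zero. The paper's argument is much softer: $\ca_2(F,Q)$ is a subgraph of $\ca(F)$, so distances in $\ca_2(F,Q)$ dominate those in $\ca(F)$, and any pseudo-Anosov of the pure mapping class group already acts on $\ca(F)$ with positive translation length by \cite[Proposition 4.6]{MasurHyperbolicityI}; no Morse-type argument or subsurface-projection machinery is needed. Second, connectedness requires no induction on intersection number and no separate essentiality check: for any $a,b\in\ca_2(F,Q)$ one may choose $\al\in\partial a$ and $\be\in\partial b$ with $\al\neq\be$, and the single unicorn path $\mathcal{U}(a^\al,b^\be)$ is already a path in $\ca_2(F,Q)$.
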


For the proof will rely on the ``guessing geodesics lemma" \cite[Proposition 3.1]{BowditchUniform} (similar results appeared earlier in \cite{GilmanDefinition} and \cite{MasurDisk}) and the unicorn paths conctruction of \cite{HenselSlim}.  

\begin{Lem}[Guessing geodesics lemma]
\label{lem:bowditch}
Suppose $\Gamma$ is a graph where all edges have length one.  
Then $\Gamma$ is Gromov hyperbolic if and only if there is a constant $M \geq 0$ and for any two vertices $x$ and $y$ in $\Gamma$ there is a connected subgraph $A(x,y) \subset \Gamma$ containing $x$ and $y$, with the following properties:
\begin{itemize}
\item
(Local) If $d_\Gamma(x,y) \leq 1$, then $A(x,y)$ has diameter at most $M$.
\item
(Slim triangles) For any three vertices $x,y,z$ in $\Gamma$ the subgraph $A(x,y)$ is contained in the $M$-neighborhood of $A(x,z) \cup A(z,y)$.
\end{itemize}
Further, the hyperbolicity constant only depends on $M$.
\end{Lem}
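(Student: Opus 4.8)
The plan is to prove the two directions of the equivalence separately, with essentially all of the work in the ``if'' direction (this is Bowditch's criterion, so in the paper it would be cited, but here is how I would argue it). The ``only if'' direction is immediate: if $\Gamma$ is $\delta$-hyperbolic, say with $\delta$-slim geodesic triangles, then for each pair $x,y$ I would take $A(x,y)$ to be a single geodesic from $x$ to $y$. The Local condition holds with $M=1$, since a geodesic joining adjacent vertices has diameter at most $1$, and the Slim triangles condition is precisely $\delta$-slimness of geodesic triangles. So the hypotheses hold with $M=\max\{1,\delta\}$, and only the forward implication requires real argument.

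For the ``if'' direction, assume the family $\{A(x,y)\}$ satisfies Local and Slim triangles with constant $M$; the goal is to conclude that $\Gamma$ is hyperbolic with constant depending only on $M$. The strategy is to show that the guessed geodesics fellow-travel genuine geodesics, and then transfer the slimness of the $A$-triangles to slimness of geodesic triangles. Fix $x,y$ with $n=d_\Gamma(x,y)$ and a geodesic $[x,y]$. The first step is a dyadic subdivision argument showing $A(x,y)\subseteq N_{\lambda}([x,y])$ with $\lambda=O(M\log n)$: given $a\in A(x,y)$, let $m$ be the midpoint vertex of $[x,y]$; by Slim triangles $a$ lies within $M$ of $A(x,m)\cup A(m,y)$, and recursing into whichever half contains the witness, after $\lceil\log_2 n\rceil$ halvings I reach a pair $u,v$ of adjacent vertices \emph{lying on $[x,y]$}, where Local bounds $\diam A(u,v)\le M$; summing one factor of $M$ per level bounds $d_\Gamma(a,[x,y])$ by $(\lceil\log_2 n\rceil+1)M$. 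The reverse containment $[x,y]\subseteq N_{\lambda'}(A(x,y))$ with $\lambda'=O(\lambda)$ then follows formally: nearest-point projection to $[x,y]$ is coarsely Lipschitz (adjacent vertices within $\lambda$ of $[x,y]$ project within $2\lambda+1$ of each other), so the connected set $A(x,y)$, which contains $x,y$ and lies in $N_\lambda([x,y])$, projects to a coarsely connected and hence coarsely dense subset of the segment $[x,y]$.

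With both containments in hand, I would transfer slimness to genuine geodesic triangles. For a geodesic triangle on $x,y,z$ with all sides of length at most $R$, combining the two containments of Step 1 with Slim triangles for the $A$-family gives $[x,y]\subseteq N_{\lambda'}(A(x,y))\subseteq N_{\lambda'+M}(A(x,z)\cup A(z,y))\subseteq N_{2\lambda'+M}([x,z]\cup[z,y])$, so geodesic triangles are slim with defect $O(M\log R)$. Finally, a geodesic space whose triangles are $f$-slim for a sublinear function $f$ is hyperbolic with constant depending only on $f$; applying this self-improvement with $f(R)=O(M\log R)$ yields that $\Gamma$ is $\delta$-hyperbolic with $\delta=\delta(M)$, as required.

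The hard part will be exactly this last passage, from the logarithmic fellow-traveling of Step 1 to a scale-independent hyperbolicity constant. The dyadic subdivision unavoidably accumulates one $M$ per level and therefore only produces a logarithmic bound; the genuine content of the lemma is that the Slim condition is assumed \emph{at every scale}, which is what forces the logarithmic defect to collapse to a constant. I would handle this via the standard ``sublinear-slim implies hyperbolic'' bootstrapping (equivalently, the fact that subquadratic isoperimetric behavior forces linear), taking care that the resulting constant is extracted as a function of $M$ alone. A secondary point requiring care is quantifying ``coarsely connected implies coarsely dense'' in the projection argument so that $\lambda'$, and hence every constant downstream, depends only on $M$ and not on the pair $x,y$.
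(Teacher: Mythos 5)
First, a point of order: the paper does not prove this lemma at all --- it is quoted, with attribution, from Bowditch \cite{BowditchUniform} (Proposition 3.1 there), so the relevant comparison is with Bowditch's published argument, not with anything in this paper. Your ``only if'' direction (take $A(x,y)$ to be a geodesic, $M=\max\{1,\delta\}$) is exactly right and is the trivial half. Your Step 1 is also correct and is the standard opening move: the dyadic recursion loses one $M$ per halving and terminates, after $\lceil \log_2 n\rceil$ steps, at a pair of adjacent vertices on $[x,y]$ where the Local condition applies, giving $d(a,[x,y])\leq(\lceil\log_2 n\rceil+1)M$; and your nearest-point-projection argument for the reverse containment (adjacent vertices of $A(x,y)$ have projections at distance at most $2\lambda+1$, so the image is coarsely onto the segment) is sound and quantifiable in terms of $M$ alone, as you note it must be.

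Where you genuinely diverge from Bowditch is the endgame, and this is also where the one real soft spot sits. Bowditch never passes through genuine geodesics: he runs the same halving scheme directly on closed loops, using the $A$-sets (his $L(x,y)$) to fill a cycle of length $n$ by cells of uniformly bounded perimeter --- the Local condition supplies the bounded cells at the bottom of the recursion --- with total area $O(n\log n)$, and then invokes the quantitative theorem that a subquadratic isoperimetric inequality implies a linear one (his own short proof of this, or Gromov/Papasoglu), which yields hyperbolicity with constant depending only on the data. Your detour through $O(M\log R)$-slim geodesic triangles is morally the same mechanism but costs an extra conversion: the black box you invoke, ``$f$-slim triangles with $f$ sublinear implies $\delta(f)$-hyperbolic,'' is a true theorem, but its standard quantitative proofs go back through precisely the isoperimetric statement, and turning $O(M\log R)$-slim triangles into a subquadratic area bound requires its own recursive filling argument (ladder a slim triangle by $O(R)$ quadrilaterals of perimeter $O(M\log R)$, then recurse on those), which you gesture at --- ``equivalently, subquadratic forces linear'' --- but do not supply, and which is not literally a restatement of the isoperimetric self-improvement. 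So your architecture is correct and your elementary steps check out, but the ``hard part'' you correctly identify is outsourced to a result whose citable quantitative form lives in isoperimetric language; running the subdivision on loops from the start, as Bowditch does, reaches that engine directly from the hypotheses and is the more economical route to a constant depending only on $M$.
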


The following two definitions were introduced in \cite{HenselSlim}.
\begin{Def}[Unicorn arcs]
Let $a$ and $b$ be arcs in $F$ in minimal position representing vertices of $\ca(F)$.
Let $\al$ and $\be$ be endpoints of $a$ and $b$, respectively, and let $\pi \in a \cap b$.
Let $a'$ and $b'$ be subarcs of $a$ and $b$, respectively, such that $\partial a' = \{\al, \pi\}$ and $\partial b = \{\pi, \be\}$, then $a' \cup b'$ represents a vertex of $\ca(F)$.
The arc $a' \cup b'$ is a \emph{unicorn arc obtained from $a^\al, b^\be$}.
\end{Def}

Observe that $a'\cup b'$ is determined by $\pi$, so the unicorn arcs obtained from $a^\al, b^\be$ are in bijection with the elements of $a \cap b$.
Order the unicorns arcs by $a'\cup b' \leq a'' \cup b''$ if $a'' \subset a'$ and let $\{c_1, \ldots, c_n\}$ be the ordered set of unicorn arcs obtained from $a^\al, b^\be$.
\begin{Def}[Unicorn paths]
The path $$\mathcal{U}(a^\al, b^\be) = (c_0 = a, c_1, \ldots, c_n, c_{n+1}=b)$$ is the \emph{unicorn path between $a^\al, b^\be$}. 
\end{Def}
Let $a^\al$ and $b^\be$ be arcs with endpoints such that $a,b\in \ca_2(F,Q)$.
If $\al \neq \be$, then $\mathcal{U}(a^\al, b^\be)$ is a path in $\ca_2(F,Q)$; in particular, $\ca_2(F,Q)$ is connected.  
For $a,b \in \ca_2(F,Q)$, define
\[
A'(a,b) = \!\!\!\bigcup_{\substack{\al \in \partial a,\, \be\in \partial b\\ \al \neq \be}}\!\!\!\mathcal{U}(a^\al, b^\be).
\]
If $a$ and $b$ are adjacent, let $A(a,b)$ be the edge between them, otherwise let $A(a,b) = A'(a,b) \cup A'(b,a)$.
It is easy to see that the graphs $A(a,b)$ satisfy the local condition of Lemma \ref{lem:bowditch} for any $M\geq 1$.  It is left to verify the second condition. 

\begin{Lem}[{\cite[Lemma 3.3]{HenselSlim}}]
\label{lem:hensel}
Let $a,b,d$ be arcs mutually in minimal position.
For every $c\in \mathcal{U}(a^\al,b^\be)$ there is $c^*\in \mathcal{U}(a^\al, d^\delta)\cup \mathcal{U}(d^\delta, b^\beta)$ such that $c$ and $c^*$ are adjacent in $\ca(F)$.
\end{Lem}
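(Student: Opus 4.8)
The plan is to prove this by a direct surgery argument on the unicorn arcs, which needs no hyperbolicity input. Fix a unicorn $c = a_1 \cup b_1 \in \mathcal{U}(a^\al, b^\be)$, where $a_1 \subset a$ runs from $\al$ to the pivot $\pi \in a \cap b$ and $b_1 \subset b$ runs from $\pi$ to $\be$; recall that for the concatenation to represent a vertex of $\ca(F)$ it must be embedded, which forces $a_1 \cap b_1 = \{\pi\}$. The idea is to walk along the third arc $d$ starting from its endpoint $\delta$ and to use the \emph{first} point at which $d$ meets $c$ as the pivot of a new unicorn lying in one of the two paths $\mathcal{U}(a^\al, d^\delta)$ or $\mathcal{U}(d^\delta, b^\be)$.

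Concretely, I would first dispose of the degenerate cases: if $c = a$ then $a$ is the initial vertex of $\mathcal{U}(a^\al, d^\delta)$, if $c = b$ then $b$ is the terminal vertex of $\mathcal{U}(d^\delta, b^\be)$, and if $d$ is disjoint from $c$ then $d$ itself occurs as an endpoint vertex of both unicorn paths; in each case the required $c^*$ is immediate. So assume $c$ is an interior unicorn with well-defined pivot $\pi$ and that $d \cap c \neq \emptyset$. Orient $d$ from $\delta$ and let $z$ be the first point of $d$ along this orientation lying on $c$, with $d_0 \subset d$ the subarc from $\delta$ to $z$; by construction $d_0$ meets $c$ only at $z$. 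There are two cases. If $z \in a_1$, then $z \in a \cap d$ and I take $c^* = a_2 \cup d_0$, where $a_2 \subset a$ runs from $\al$ to $z$; this is a unicorn in $\mathcal{U}(a^\al, d^\delta)$. If instead $z \in b_1$, then $z \in b \cap d$ and I take $c^* = d_0 \cup b_2$, where $b_2 \subset b$ runs from $z$ to $\be$; this is a unicorn in $\mathcal{U}(d^\delta, b^\be)$. (The coincidence $z = \pi$ may be placed in either case after a harmless perturbation.)

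It remains to verify that $c$ and $c^*$ are adjacent in $\ca(F)$, i.e.\ that $i(c, c^*) = 0$. In the case $z \in a_1$ (the case $z \in b_1$ being symmetric), the arc $c^* = a_2 \cup d_0$ interacts with $c = a_1 \cup b_1$ in only three ways: along the overlap $a_2 \subset a_1$ running inside the single arc $a$; at the surgery point $z$; and through the genuinely transverse pairings $a_2$--$b_1$ and $d_0$--$b_1$. The first two are isotopy artifacts that a small push-off of $c^*$ to one side of $a$ removes. The pairing $a_2$--$b_1$ is empty because $a_2 \subset a_1$ and $a_1 \cap b_1 = \{\pi\}$ with $\pi \notin a_2$; the pairing $d_0$--$b_1$ is empty because $d_0$ meets $c \supset b_1$ only at $z \in a_1 \setminus b_1$. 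Hence $c$ and $c^*$ admit disjoint representatives, and the same bookkeeping shows $c^*$ is itself embedded and essential, so it legitimately represents a vertex of $\ca(F)$.

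I expect the crux---and the one place where care is genuinely needed---to be this disjointness verification, specifically separating honest transverse crossings (which must be shown to vanish) from shared subarcs running along $a$ or $b$ (which are isotopy artifacts and must be argued not to count). This is exactly where the embeddedness identity $a_1 \cap b_1 = \{\pi\}$ and the ``first return'' choice of $z$ are used in tandem; the mutual minimal-position and essentiality hypotheses are what guarantee that the surgered arc $c^*$ is again a well-defined vertex rather than an inessential or self-intersecting arc.
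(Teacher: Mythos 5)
Your argument is correct and coincides with the proof of the cited result: the paper does not reprove this lemma but quotes \cite[Lemma 3.3]{HenselSlim}, whose proof is exactly your surgery --- take the first point $z$ of $d\cap c$ along $d$ starting from $\delta$, form the unicorn with pivot $z$ in $\mathcal{U}(a^\al,d^\delta)$ or $\mathcal{U}(d^\delta,b^\be)$ according to whether $z$ lies on the $a$-side or $b$-side of $c$, and observe that this arc meets $c$ only along a shared subarc and at $z$, which an isotopy removes. Your treatment of the degenerate cases and of the embeddedness of $c^*$ matches the source, so there is nothing to add.
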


\begin{Lem}[Slim triangles]
\label{lem:slim}
For all $a,b,d \in \ca_2(F,Q)$, the subgraph $A(a,b)$ is contained in the $2$-neighborhood of $A(a,d) \cup A(d,b)$.  
\end{Lem}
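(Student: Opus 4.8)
The plan is to deduce Lemma \ref{lem:slim} directly from the combinatorial slimness statement of Lemma \ref{lem:hensel}, which already provides the essential geometric input at the level of individual unicorn paths. The only work remaining is bookkeeping: the subgraph $A(a,b)$ is built out of several unicorn paths $\mathcal{U}(a^\al, b^\be)$ (one for each admissible pair of endpoints $\al \neq \be$), and I must relate each of these to the corresponding union of unicorn paths making up $A(a,d)$ and $A(d,b)$, while keeping careful track of the requirement that all arcs involved have distinct endpoints so that the relevant vertices genuinely lie in $\ca_2(F,Q)$.

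First I would dispose of the degenerate case: if $a$ and $b$ are adjacent in $\ca_2(F,Q)$, then $A(a,b)$ is a single edge and its two vertices $a,b$ lie in $A(a,d)$ and $A(d,b)$ respectively, so the $2$-neighborhood condition is immediate. So assume $a,b$ are not adjacent, whence $A(a,b) = A'(a,b) \cup A'(b,a)$. It suffices to show each vertex $c \in \mathcal{U}(a^\al, b^\be)$ (for $\al \neq \be$) lies within distance $2$ of $A(a,d) \cup A(d,b)$. Now I would fix an endpoint $\delta$ of $d$; since $Q$ has at least three elements and $\al, \be$ are two distinct points of $Q$, I can choose $\delta \in \partial d$ distinct from \emph{both} $\al$ and $\be$. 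This is the key use of the hypothesis $|Q| \geq 3$: it guarantees a third endpoint is available so that both pairs $(\al,\delta)$ and $(\delta,\be)$ have distinct endpoints, making $\mathcal{U}(a^\al, d^\delta)$ and $\mathcal{U}(d^\delta, b^\be)$ into honest paths in $\ca_2(F,Q)$ rather than merely in $\ca(F)$.

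With such a $\delta$ fixed, Lemma \ref{lem:hensel} (applied with the triple $a,b,d$ in minimal position) gives for my vertex $c \in \mathcal{U}(a^\al,b^\be)$ a vertex $c^* \in \mathcal{U}(a^\al, d^\delta) \cup \mathcal{U}(d^\delta, b^\be)$ that is adjacent to $c$ in $\ca(F)$. The arc $c^*$ is a unicorn arc, hence it has distinct endpoints lying in $\{\al,\delta\} \subset Q$ or $\{\delta,\be\} \subset Q$, so $c^*$ is in fact a vertex of $\ca_2(F,Q)$ and lies in $A'(a,d) \subset A(a,d)$ or in $A'(d,b) \subset A(d,b)$. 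The subtlety is that $c$ and $c^*$ are adjacent in the full arc graph $\ca(F)$, but I need them close in the induced subgraph $\ca_2(F,Q)$, where the metric can be larger. I would bridge this gap by noting that both $c$ and $c^*$ have their endpoints among $\{\al,\be,\delta\} \subset Q$ and are disjoint as arcs; I then exhibit a vertex of $\ca_2(F,Q)$ adjacent in $\ca_2(F,Q)$ to both of them, or argue directly that $d_{\ca_2}(c,c^*) \leq 2$, which yields the claimed $2$-neighborhood bound.

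I expect this last point---promoting $\ca(F)$-adjacency of $c$ and $c^*$ to a distance-$2$ bound inside the induced subgraph $\ca_2(F,Q)$---to be the main obstacle, since induced subgraphs can distort distances arbitrarily in general. The resolution should exploit the special structure of unicorns: $c$ and $c^*$ share the endpoint $\al$ (or $\be$) and are disjoint, so their union lies in a small subsurface, and one can either find a short connecting arc with endpoints in $Q$ or slide one endpoint to produce a common neighbor in $\ca_2(F,Q)$. Making this gluing argument precise, while confirming that the witnessing arc is essential and has distinct endpoints in $Q$, is where the care is needed; everything else is a direct unwinding of the definitions of $A'$, $A$, and the unicorn paths together with Lemma \ref{lem:hensel}.
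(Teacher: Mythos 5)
There is a genuine gap, and it sits exactly where the paper's proof does its real work. Your key step is the claim that, because $|Q|\geq 3$, you can choose $\delta\in\partial d$ with $\delta\notin\{\al,\be\}$. But $\partial d$ has only two elements, and nothing rules out $\partial d=\{\al,\be\}$ (for instance $\partial a=\partial b=\partial d=\{\al,\be\}$, which certainly occurs). The hypothesis $|Q|\geq 3$ supplies a third point of $Q$, not a third available endpoint of $d$. When $\partial d=\{\al,\be\}$ your argument breaks down: for $\delta\in\partial d$ one of the two unicorn paths $\mathcal{U}(a^\al,d^\delta)$, $\mathcal{U}(d^\delta,b^\be)$ consists of arcs with \emph{equal} endpoints, which are not vertices of $\ca_2(F,Q)$ and are not part of $A(a,d)\cup A(d,b)$ as defined. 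This degenerate case is precisely what the paper's proof is devoted to: it chooses an auxiliary arc $d'\in\ca_2(F,Q)$ adjacent to $d$ with $\partial d'\neq\partial d$, picks $\delta\in\partial d'\smallsetminus\{\al,\be\}$, applies Lemma \ref{lem:hensel} to $a^\al,b^\be,d'^\delta$ to move $c$ within distance $1$ of $\mathcal{U}(a^\al,d'^\delta)\cup\mathcal{U}(d'^\delta,b^\be)$, and then applies Lemma \ref{lem:hensel} a second time (to $a^\al,d'^\delta,d^\be$) to land in $A(a,d)\cup A(d,b)$; the two applications are why the constant in the statement is $2$ and not $1$. Your write-up contains no substitute for this step.

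Conversely, the point you single out as the ``main obstacle'' is not an obstacle at all. The graph $\ca_2(F,Q)$ is the \emph{induced} subgraph of $\ca(F)$ on arcs with distinct endpoints in $Q$; passing to an induced subgraph can increase distances, but it cannot delete an edge between two vertices that both survive. So once you know $c,c^*\in\ca_2(F,Q)$ are disjoint, they are adjacent in $\ca_2(F,Q)$, and no common-neighbor or ``gluing'' argument is needed. In the non-degenerate case ($\partial d\neq\{\al,\be\}$) your argument is correct and matches the paper's first line; the missing content is entirely in the case $\partial d=\{\al,\be\}$.
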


\begin{proof}
Unless $\partial a = \partial b = \partial d$, the result follows directly from Lemma \ref{lem:hensel}.  
So, assume that $\partial a = \partial b = \partial d = \{\al, \be\}$.  
Let $c\in \mathcal{U}(a^\al, b^\be) \subset A(a,b)$. Pick $d' \in \ca_2(F,Q)$ such that $d'$ is adjacent to $d$ and $\partial d' \neq \partial d$.
We can now pick $\delta \in \partial d'\smallsetminus \{\al,\be\}$ and apply Lemma \ref{lem:hensel} to $a^\al, b^\be, d'^\delta$ to get $c' \in \mathcal{U}(a^\al,d'^\delta)\cup \mathcal{U}(d'^\delta,b^\beta)$ such that $c$ and $c'$ are disjoint.
Assume that $c' \in \mathcal{U}(a^\al,d'^\delta)$.
Apply Lemma \ref{lem:hensel} to $a^\al, d'^\delta, d^\beta$ to get $c^* \in A(a,d)$ such that $d(c,c^*) \leq 2$.  
\end{proof}

\begin{proof}[Proof of Theorem \ref{thm:arcs}]
The existence of unicorn paths provides the connectedness. 
Lemma \ref{lem:slim} tells us the conditions of Lemma \ref{lem:bowditch} are met with $M = 2$ implying uniform hyperbolicity.
Finally, as $\ca_2(F,Q)$ is a subgraph of $\ca(F)$, any pseudo-Anosov element of the pure mapping class group acts with positive translations length \cite[Proposition 4.6]{MasurHyperbolicityI}.
\end{proof}

\subsection{Curves}
Moving to $\sep_2(F,Q)$, consider the natural map $\phi\co \ca_2(F,Q) \to \sep_2(F,Q)$ given by setting $\phi(a)$ to be the boundary of a regular neighborhood of $a$.

\begin{Lem}\label{lem:arcqi}
If $|Q|\geq 5$, then $\phi$ is a $(2,2)$-quasi-isometry.
\end{Lem}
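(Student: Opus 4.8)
The plan is to verify directly the three defining properties of a $(2,2)$-quasi-isometry for $\phi$: coarse surjectivity, an upper Lipschitz bound, and a matching lower bound. First I would record two structural facts. Since each $a\in\ca_2(F,Q)$ joins two distinct punctures of $Q$, the curve $\phi(a)$ bounds a twice-punctured disk $D_a$ containing exactly the two endpoints of $a$; as $|Q|\geq 5$, the opposite side of $\phi(a)$ contains the remaining $|Q|-2\geq 3$ punctures of $Q$, so $\phi(a)$ is indeed a vertex of $\sep_2(F,Q)$ and $\phi$ is well defined. Moreover $\phi$ is injective: if $\phi(a)=\phi(b)$ then $D_a=D_b$, and since an essential arc joining the two punctures of a twice-punctured disk is unique up to isotopy, $a=b$. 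In particular $d_{\sep}(\phi(a),\phi(b))=0$ forces $a=b$, which disposes of the degenerate case in the estimates below.

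The upper bound and coarse surjectivity are essentially immediate from disjointness. If $a,b$ are disjoint arcs, their regular neighborhoods, hence $\phi(a)$ and $\phi(b)$, can be taken disjoint; being distinct by injectivity, they are adjacent in $\sep_2(F,Q)$. Applying this along an $\ca_2$-geodesic yields $d_{\sep}(\phi(a),\phi(b))\leq d_{\ca}(a,b)$. For coarse surjectivity, given $c\in\sep_2(F,Q)$ I would pick two punctures of $Q$ on one side of $c$ (there are at least two, by definition of $\sep_2$) and join them by an arc $a$ on that side; then $\phi(a)$ is disjoint from $c$, so $d_{\sep}(\phi(a),c)\leq 1$.

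The remaining step, the lower bound $d_{\ca}(a,b)\leq 2\,d_{\sep}(\phi(a),\phi(b))+1$, is the heart of the argument, and I would prove it by lifting a $\sep_2$-geodesic $\phi(a)=c_0,c_1,\dots,c_m=\phi(b)$ to an arc path. The key observation is that two disjoint, non-isotopic separating curves $c_i,c_{i+1}$ cut $F$ into three regions arranged linearly as $R_1\mid c_i\mid R_2\mid c_{i+1}\mid R_3$, where $R_1$ is the side of $c_i$ not containing $c_{i+1}$ and $R_3$ is the side of $c_{i+1}$ not containing $c_i$. Because $c_i,c_{i+1}\in\sep_2(F,Q)$, both $R_1$ and $R_3$ contain at least two punctures of $Q$; let $u_i\subset R_1$ and $w_i\subset R_3$ be arcs joining two such punctures, so $u_i$ and $w_i$ lie in disjoint regions and $d_{\ca}(u_i,w_i)\leq 1$. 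To splice consecutive edges I compare $w_i$ and $u_{i+1}$, both lying on a side of $c_{i+1}$: if $c_i$ and $c_{i+2}$ lie on opposite sides of $c_{i+1}$ these arcs are on opposite sides and hence disjoint, while if they lie on the same side the relevant regions coincide and I simply set $w_i=u_{i+1}$; in either case $d_{\ca}(w_i,u_{i+1})\leq 1$. Finally $u_0$ can be chosen disjoint from $a$ (taking $u_0=a$ in the case where the region $R_1$ for the pair $(c_0,c_1)$ is the disk $D_a$ itself), and symmetrically $w_{m-1}$ disjoint from $b$. The resulting path $a,u_0,w_0,u_1,\dots,u_{m-1},w_{m-1},b$ has consecutive distances at most $1$ and length $2m+1$, giving the bound.

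Combining the three estimates yields $\tfrac12 d_{\ca}-2\leq d_{\sep}\leq 2\,d_{\ca}+2$ together with $1$-dense image, so $\phi$ is a $(2,2)$-quasi-isometry. I expect the lower bound to be the main obstacle: one must keep track of the relative position of consecutive geodesic curves, and it is precisely the linear three-region decomposition, combined with the opposite-side/same-side dichotomy that governs the splicing, that makes the construction go through; the well-definedness, injectivity, upper bound, and surjectivity are all direct consequences of disjointness and the puncture count guaranteed by membership in $\sep_2(F,Q)$.
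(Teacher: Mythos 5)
Your overall strategy (coarse surjectivity, upper Lipschitz bound, lower bound by lifting a $\sep_2$-geodesic to an arc path) matches the paper's, and your lower bound — the three-region splicing with the same-side/opposite-side dichotomy — is a correct, slightly more hands-on variant of the paper's argument via the sets $\Phi(x)=\{a: a \text{ disjoint from } x\}$. But your upper bound contains a genuine error. You claim that if $a,b$ are disjoint arcs then $\phi(a)$ and $\phi(b)$ can be realized disjointly, hence are adjacent, and conclude $d_{\sep}(\phi(a),\phi(b))\le d_{\ca}(a,b)$. This fails exactly when $a$ and $b$ share an endpoint: if $a$ joins $p,q$ and $b$ joins $q,r$, then $\phi(a)$ bounds a twice-punctured disk containing $\{p,q\}$ and $\phi(b)$ one containing $\{q,r\}$. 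Were these curves disjoint, the two disks would be either disjoint (impossible, both contain $q$) or nested (impossible, the outer one would then contain three punctures). So $\phi(a)$ and $\phi(b)$ necessarily intersect, $d_{\sep}(\phi(a),\phi(b))=2$, and your claimed inequality is false.

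The repair is exactly what the paper does, and it is why the constant is $2$ rather than $1$ in the Lipschitz direction: when $\partial a\cap\partial b\neq\emptyset$ one has $|\partial a\cup\partial b|\le 3$, so $|Q|\ge 5$ provides two further punctures, and an arc $c$ joining them in the (connected) complement of $a\cup b$ satisfies $d_{\sep}(\phi(a),\phi(c))=d_{\sep}(\phi(c),\phi(b))=1$; hence $d_{\sep}(\phi(a),\phi(b))\le 2\,d_{\ca}(a,b)$. This still lands within the $(2,2)$ constants, so the lemma survives, but as written your argument for the upper bound does not. (Your final displayed chain of inequalities also quietly switches from the bound $d_{\sep}\le d_{\ca}$ asserted in the body to $d_{\sep}\le 2d_{\ca}+2$, which signals the same unresolved case.)
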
 

\begin{proof}
Let $d_\ca$ and $d$ denote distance in $\ca_2(F,Q)$ and $\sep_2(F,Q)$, respectively. 
Consider any $c\in\sep_2(F,Q)$.
If $a$ is any arc in a component of $F\smallsetminus c$ connecting two points of $Q$, then either $\phi(a)=c$ or $\phi(a)$ is disjoint from $c$. 
So, the image of $\phi$ is $1$-dense in $\sep_2(F,Q)$.

Observe that if $d_\ca(a,b) = 1$ with $\partial a \cap \partial b = \emptyset$, then $d(\phi(a),\phi(b)) = 1$. 
Further, if $d_\ca(a,b)=1$ with $\partial a \cap \partial b \neq \emptyset$, then there is an arc $c$ with endpoints in $Q\smallsetminus (\partial a \cup \partial b)$ disjoint from both $a$ and $b$.  
It follows that $d(\phi(a),\phi(b)) = 2$.  
We have shown that 
\[
d(\phi(a),\phi(b)) \leq 2d_\ca(a,b).
\]

We now focus on giving an inequality in the other direction.
Given $x \in \sep_2(F,Q)$, define $\Phi(x) = \{a \in \ca_2(F,Q) : a \text{ is disjoint from } x \}$.  We proceed by a sequence of claims:

\textit{Claim 1: If $\phi(a)$ is disjoint from $x \in \sep_2(F,Q)$, then $d_\ca(a,b) \leq 2$ for all $b \in \Phi(x)$.}

If $\phi(a)$ is disjoint from $x$, then so is $a$.
Therefore, if $b\in \Phi(x)$ with $d_\ca(a,b) > 1$, then $a,b$ are contained in the same component of $F\smallsetminus x$.
The other component must contain an element of $\ca_2(F,Q)$ implying $d_\ca(a,b) = 2$.

\textit{Claim 2: If $x,y \in \sep_2(F,Q)$ are disjoint and $a \in \Phi(x)$, then there exists $b \in \Phi(y)$ with $d_\ca(a,b) = 1$.}

As $F\smallsetminus (x \cup y)$ has three components, there must be a component disjoint from $a$ and containing at least two points of $Q$.
Let $b\in \ca_2(F,Q)$ be any arc in this component, then $b\in \Phi(y)$ and $d_\ca(a,b) =1$.

\textit{Claim 3: If $d(\phi(a),\phi(b)) \leq 1$, then $d_\ca(a,b)  \leq 1$.}

Observe that $\phi(a) = \phi(b)$ if and only if $a = b$.  
Now assume that $d(\phi(a),\phi(b))=1$.   
This can only happen if $a,b$ are disjoint and have distinct endpoints.  
In particular, $d_\ca(a,b) = 1$. 

We are now in a place to give the inequality.
Let $m = d(\phi(a), \phi(b))$, then, by Claim 3, we can assume that $m \geq 2$.
Let $\phi(a) = x_0, x_1, \ldots, x_m=\phi(b)$ be a path between $\phi(a)$ and $\phi(b)$.
Define $c_0 = a$ and choose $c_i \in \Phi(x_i)$ so that $d_\ca(c_{i-1},c_i) = 1$ as guaranteed by Claim 2.
By Claim 1, $d_\ca(c_m, b) \leq 2$.
This tells us that $d_\ca(a,b) \leq m+2$.
We have shown
\[
d_\ca(a,b) - 2 \leq d(\phi(a),\phi(b)) \leq 2 d_\ca(a,b)
\]
which yields the desired result.
\end{proof}

\begin{Thm}
\label{thm:sephyp}
Let $F$ be a finite-type surface and $Q$ a subset of $\Ends(F)$.
If $|Q|\geq 5$, then $\sep_2(F,Q)$ is connected, infinite-diameter, and $\delta$-hyperbolic, where $\delta$ can be chosen independent of $S$ and $Q$. 
Furthermore, there exist infinitely many elements of $\mcg(F)$ acting on $\sep_2(F,Q)$ with positive translation length.
\end{Thm}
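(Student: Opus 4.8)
The plan is to transfer all four conclusions from the arc graph $\ca_2(F,Q)$, whose properties are supplied by Theorem \ref{thm:arcs}, across the natural map $\phi\co \ca_2(F,Q)\to\sep_2(F,Q)$. Since $|Q|\geq 5$, Lemma \ref{lem:arcqi} tells us that $\phi$ is a $(2,2)$-quasi-isometry, and since regular neighborhoods are natural, $\phi$ is $\mcg(F)$-equivariant: $\phi(g\cdot a)=g\cdot\phi(a)$ for every $g\in\mcg(F)$ and every $a\in\ca_2(F,Q)$. Each of the four properties---connectedness, hyperbolicity, infinite diameter, and the existence of elements with positive translation length---is a (coarse) quasi-isometry invariant, so the strategy is simply to verify that the relevant constants remain uniform in $F$ and $Q$.

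First I would establish connectedness directly, since the quasi-isometry estimate of Lemma \ref{lem:arcqi} presupposes that $\sep_2(F,Q)$ is connected (otherwise the metric $d$ is not finite-valued). Given $x,y\in\sep_2(F,Q)$, condition (ii) in the definition of $\sep_2$ produces arcs $a,b\in\ca_2(F,Q)$ disjoint from $x$ and $y$ respectively, so that $\phi(a)$ and $\phi(b)$ are vertices with $d(x,\phi(a))\leq 1$ and $d(y,\phi(b))\leq 1$ (this is precisely the $1$-density established in the proof of Lemma \ref{lem:arcqi}). Joining $a$ to $b$ by a path $a=a_0,\dots,a_n=b$ in $\ca_2(F,Q)$, which is connected by Theorem \ref{thm:arcs}, the local estimate in that same proof shows $d(\phi(a_{i-1}),\phi(a_i))\leq 2$: when consecutive arcs share an endpoint one invokes $|Q|\geq 5$ to find an arc with both endpoints in $Q\smallsetminus(\partial a_{i-1}\cup\partial a_i)$ disjoint from both, which supplies an intermediate adjacent vertex. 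Concatenating yields a path from $x$ to $y$.

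With connectedness in hand, both graphs are geodesic metric spaces, and I would invoke the quasi-isometry invariance of Gromov hyperbolicity (e.g.\ \cite[\S III.H.1]{BridsonMetric}). Theorem \ref{thm:arcs} provides a hyperbolicity constant $\delta_0$ for $\ca_2(F,Q)$ that is independent of $F$ and $Q$; since the quasi-isometry constants $(2,2)$ are absolute, the resulting hyperbolicity constant $\delta$ for $\sep_2(F,Q)$ depends only on $\delta_0$ and $(2,2)$, hence is independent of $F$ and $Q$, as claimed. Infinite diameter is then immediate from the lower bound $d_\ca(a,b)-2\leq d(\phi(a),\phi(b))$ in Lemma \ref{lem:arcqi} together with the fact that $\ca_2(F,Q)$ has infinite diameter, itself a consequence of the positive-translation-length elements furnished by Theorem \ref{thm:arcs}.

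Finally, for the mapping classes, Theorem \ref{thm:arcs} provides infinitely many $g\in\pmcg(F)$ acting on $\ca_2(F,Q)$ with positive translation length; being pure, each such $g$ fixes $Q$ and hence acts on $\sep_2(F,Q)$. Combining equivariance with the lower quasi-isometry bound gives $d(\phi(x),g^n\phi(x))=d(\phi(x),\phi(g^n x))\geq d_\ca(x,g^n x)-2$, so after dividing by $n$ and letting $n\to\infty$ the stable translation length of $g$ on $\sep_2(F,Q)$ is at least its stable translation length on $\ca_2(F,Q)$, and in particular positive. Thus these infinitely many elements continue to act with positive translation length on $\sep_2(F,Q)$. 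The only real obstacle I anticipate is bookkeeping: one must prove connectedness before the metric language of Lemma \ref{lem:arcqi} is deployed, and then confirm that no constant secretly depends on $F$ or $Q$---but everything funnels through the uniform constant $\delta_0$ of Theorem \ref{thm:arcs} and the absolute quasi-isometry constants, so no new topological input is required.
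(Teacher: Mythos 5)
Your proposal is correct and follows exactly the paper's route: the paper's entire proof of Theorem \ref{thm:sephyp} is the single sentence that it ``follows immediately from Theorem \ref{thm:arcs} and Lemma \ref{lem:arcqi},'' and you have simply spelled out the details of that deduction (including the worthwhile observation that connectedness should be checked before the quasi-isometry estimates are invoked, which the paper leaves implicit in the $1$-density argument of Lemma \ref{lem:arcqi}). No divergence from the paper's argument and no gaps.
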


\begin{proof}
The result follows immediately from Theorem \ref{thm:arcs} and Lemma \ref{lem:arcqi}.
\end{proof}

\subsection{Non-singletons}

\begin{figure}
\includegraphics{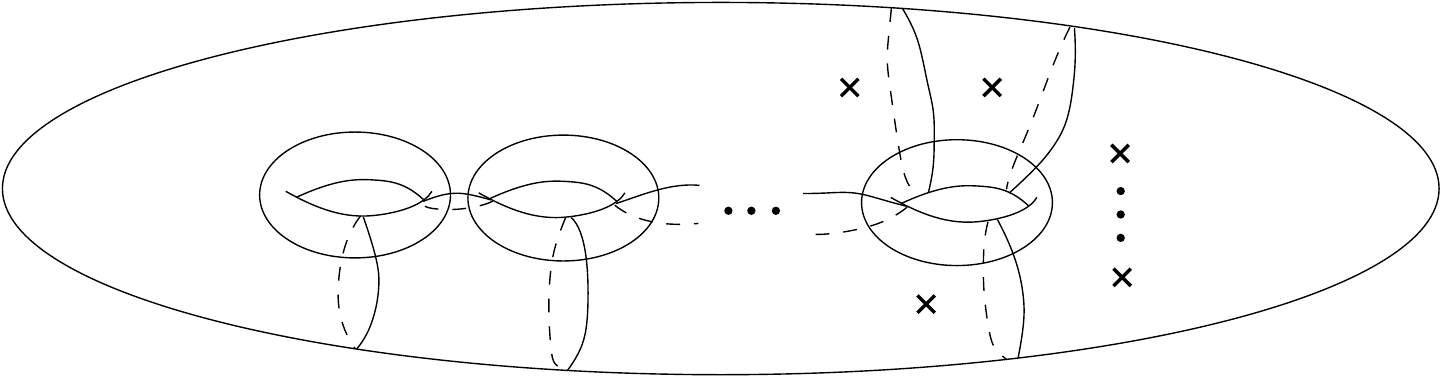}
\caption{Dehn twists about the curves shown generate the pure mapping class group $\pmcg(F)$. The crosses denote punctures.}
\label{fig:generators}
\end{figure}

Let us return to the case where $\mathcal Q$ is a collection of pairwise disjoint closed subsets of $\Ends(F)$.  
As $F$ is finite-type, each element $Q\in \mathcal Q$ is a finite collection of punctures.

\begin{Prop}\label{prop:sep2F}
If $|\mathcal Q| \geq 5$, then $\sep_2(F,\mathcal Q)$ is connected and infinite-diameter.  Furthermore, there exist infinitely many elements of $\mcg(F)$ acting on $\sep_2(F, \mathcal Q)$ with positive translation length.  
\end{Prop}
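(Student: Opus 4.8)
The plan is to handle connectedness separately from the remaining (geometric) assertions. For infinite diameter and positive translation length I would reduce to the singleton case already established in Theorem \ref{thm:sephyp}; for connectedness I would run Putman's criterion \cite[Lemma 2.1]{PutmanConnectivity} directly on $\sep_2(F,\mathcal Q)$.

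For the geometric statements, choose one puncture $q_i$ from each $Q_i \in \mathcal Q$ and set $P = \{q_1,\dots,q_n\}$, a collection of $n = |\mathcal Q| \ge 5$ singletons. Every vertex of $\sep_2(F,\mathcal Q)$ keeps each $Q_i$ on a single side and puts at least two of the $Q_i$ on each side, so in particular it is a separating curve placing at least two points of $P$ on each side; hence it is a vertex of $\sep_2(F,P)$, and we obtain an inclusion of vertex sets $\iota\co \sep_2(F,\mathcal Q) \into \sep_2(F,P)$. Since $|\mathcal Q| = |P| \ge 5$, adjacency in both graphs is disjointness, so disjoint curves map to disjoint curves and $\iota$ is $1$-Lipschitz, i.e. $d_{\sep_2(F,P)}(\iota x, \iota y) \le d_{\sep_2(F,\mathcal Q)}(x,y)$. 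Because $\pmcg(F)$ fixes $\Ends(F)$ pointwise, it preserves both $\mathcal Q$ and $P$, acts on both graphs, and $\iota$ is $\pmcg(F)$-equivariant. By Theorem \ref{thm:sephyp} there are infinitely many pseudo-Anosov $g \in \pmcg(F)$ acting on $\sep_2(F,P)$ with translation length $\tau_g > 0$. For any vertex $a$ of $\sep_2(F,\mathcal Q)$, equivariance and the Lipschitz bound give $d_{\sep_2(F,\mathcal Q)}(a, g^m a) \ge d_{\sep_2(F,P)}(\iota a, g^m \iota a) \ge \tau_g\, m - C$, so each such $g$ acts on $\sep_2(F,\mathcal Q)$ with positive translation length. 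This produces infinitely many elements of $\mcg(F)$ as claimed and, a fortiori, establishes infinite diameter.

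For connectedness I would apply the general form of Putman's criterion to the action of $\pmcg(F)$, which is generated by the finite collection of Dehn twists in Figure \ref{fig:generators}. Since $\pmcg(F)$ cannot permute the $Q_i$, its vertex orbits are indexed by the finitely many combinatorial types recording the induced partition of $\mathcal Q$ into two parts of size at least two, together with the distribution of the genus and of the remaining punctures; choose one representative per type and a base vertex $v_0$. The criterion then reduces connectedness to two verifications: (1) every orbit representative is joined to $v_0$ by a path in $\sep_2(F,\mathcal Q)$, and (2) for each generating twist $T_\gamma$, the vertices $v_0$ and $T_\gamma(v_0)$ are joined by a path. For (1) I would use elementary surgeries that move a single $Q_i$ across the curve and that redistribute genus and free punctures one handle (or puncture) at a time, realizing each move by a disjoint, hence adjacent, vertex; here $|\mathcal Q| \ge 5$ is exactly what guarantees both sides retain at least two elements of $\mathcal Q$ throughout. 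For (2), $T_\gamma(v_0) = v_0$ when $\gamma$ is disjoint from $v_0$, and otherwise one produces a vertex disjoint from both $v_0$ and $T_\gamma(v_0)$, again using the room afforded by $|\mathcal Q| \ge 5$.

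The main obstacle will be the surgery bookkeeping in step (1): one must check that at every stage the intermediate curves remain genuine vertices---separating, essential, splitting none of the $Q_i$, and leaving at least two $Q_i$ on each side---while simultaneously adjusting the genus and the free punctures to reach an arbitrary combinatorial type. The geometric half is, by contrast, routine once Theorem \ref{thm:sephyp} is available, since it requires only the crude $1$-Lipschitz inclusion and not a quasi-isometry. The borderline case $|\mathcal Q| = 4$, where adjacency is relaxed to intersection number at most two, would require the parallel surgery and Putman arguments carried out with this weaker notion of adjacency.
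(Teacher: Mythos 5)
Your overall strategy is sound and, for the connectedness half, is the same Putman-criterion strategy the paper uses; the difference lies in how the two halves are executed. For the geometric statements your route is a valid alternative: you compare $\sep_2(F,\mathcal Q)$ with $\sep_2(F,P)$ for a transversal $P$ of singletons, using that the inclusion of the subgraph is $1$-Lipschitz and $\pmcg(F)$-equivariant, and then import loxodromics from Theorem \ref{thm:sephyp}. The paper gets the same conclusion more directly by observing that $\sep_2(F,\mathcal Q)$ is a subgraph of $\cc(F)$, so distances dominate curve-graph distances and any pseudo-Anosov in $\pmcg(F)$ acts with positive translation length; both arguments are correct, and yours has the minor virtue of only invoking the already-established singleton case rather than \cite[Proposition 4.6]{MasurHyperbolicityI}.

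Where your write-up stops short is exactly where you say it does: verification (1) of Putman's criterion. You propose to enumerate the finitely many $\pmcg(F)$-orbits (determined by the genus distribution and the partition of \emph{all} of $\Ends(F)$, not just of $\mathcal Q$) and connect representatives by elementary surgeries; this can be made to work, but the bookkeeping you flag is genuinely the entire content of the step, and as written it is a plan rather than a proof. The paper avoids it with a reduction you could adopt wholesale: fix disjoint curves $c_i$ cutting off disks $D_i \supset Q_i$ and set $\Sigma = F \smallsetminus \bigcup D_i$. Since a vertex $b$ never splits any $Q_i$, there is $h \in \pmcg(F)$ (change of coordinates for pure mapping classes) with $h(b) \subset \Sigma$, and then $h(b)$ and the base vertex $a$ both lie in $\sep_2(\Sigma, C)$ with $C = \{c_1,\dots,c_n\}$ a collection of $n \geq 5$ singleton boundary ends, which is connected by Theorem \ref{thm:sephyp}. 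This hands the hard combinatorics back to the singleton case and eliminates the surgery argument entirely. Your verification (2) matches the paper's (choose twist generators as in Figure \ref{fig:generators} so that each fixes a vertex disjoint from $a$, giving $d(a, t(a)) \leq 2$). I would also note that infinite diameter follows already from the positive-translation-length statement (or from the subgraph comparison), so your ordering of the deductions is fine.
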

 
In the proof we will use the following connectivity criterion of Putman \cite[Lemma 2.1]{PutmanConnectivity}.

\begin{Lem}[Connectivity criterion]\label{lem:putman}
Let $G$ be a group acting on a simplicial complex $X$ and let $S$ be a generating set of $G$. Suppose there is a vertex $v$ of $X$ such that
\begin{enumerate}
\item for any vertex $w$, $G\cdot v$ intersects the connected component containing $w$, and
\item $v$ is connected to $s\cdot v$ for every $s\in S^{\pm 1}$.
\end{enumerate}
Then $X$ is connected.
\end{Lem}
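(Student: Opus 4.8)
The plan is to reduce the connectivity of $X$ to the single assertion that the entire orbit $G\cdot v$ lies in one connected component, namely the component of $v$, and then let hypothesis (1) do the rest. Concretely, suppose I have shown that $g\cdot v$ lies in the component of $v$ for every $g\in G$. Given an arbitrary vertex $w$, hypothesis (1) furnishes some $g\in G$ with $g\cdot v$ in the component of $w$; since $g\cdot v$ is also in the component of $v$, transitivity places $w$ in the component of $v$. As $w$ was arbitrary, $X$ has a single component and is therefore connected. So the whole proof is driven by the orbit-connectivity claim.

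To establish that $G\cdot v$ lies in the component of $v$, I would induct on the word length $\ell(g)$ of $g$ with respect to the generating set $S^{\pm 1}$. The base case $\ell(g)=0$ is immediate, since then $g$ is the identity and $g\cdot v=v$. For the inductive step I would take a shortest word for $g$ and split off its last letter, writing $g=hs$ with $s\in S^{\pm 1}$ and $\ell(h)=\ell(g)-1$. The essential observation, and the only point at which the nature of the action is used, is that $G$ acts by \emph{simplicial automorphisms}: each element carries the $1$-skeleton bijectively to itself and preserves adjacency, hence sends edge-paths to edge-paths and thereby permutes the connected components of $X$. Applying $h$ to the edge-path between $v$ and $s\cdot v$ guaranteed by hypothesis (2) produces an edge-path between $h\cdot v$ and $hs\cdot v=g\cdot v$, so $h\cdot v$ and $g\cdot v$ share a component. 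Combining this with the inductive hypothesis that $h\cdot v$ lies in the component of $v$, transitivity yields that $g\cdot v$ lies in the component of $v$, completing the induction.

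I do not expect a genuine obstacle here: this is a standard orbit-connectivity induction, and all the substantive content is encoded in the two hypotheses. The only steps meriting care are the verification that a simplicial automorphism preserves connected components (so that translates of the paths from hypothesis (2) remain paths) and the routine bookkeeping that a shortest word for $g$ restricts to a word for its prefix $h$ of length exactly $\ell(g)-1$, which keeps the induction well-founded. Neither is difficult, so the argument should be short and self-contained.
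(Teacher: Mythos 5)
Your proof is correct, and since the paper simply cites this as \cite[Lemma 2.1]{PutmanConnectivity} without reproducing an argument, the relevant comparison is with Putman's original proof---which is precisely your argument: an induction on word length showing the orbit $G\cdot v$ lies in the component of $v$ (using that simplicial automorphisms carry edge-paths to edge-paths), followed by an application of hypothesis (1) and transitivity. Nothing further is needed.
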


\begin{proof}[Proof of Proposition \ref{prop:sep2F}]
 Let $\mathcal Q = \{Q_1, \ldots, Q_n\}$. 
Let $C=\{c_1, \ldots, c_n\}$ be a collection of pairwise disjoint separating curves such that one component of $F\smallsetminus c_i$, denoted $D_i$, is a disk containing $Q_i$.
Let $a \in \sep_2(F, \mathcal Q)$ be a curve that bounds a pair of pants with $c_1$ and $c_2$.  

Note that the assumption $|\mathcal Q| \geq 5$ is necessary: if the genus of $F$ is 0 and $|\mathcal Q| = 4$, then no two homotopically distinct curves $a$ and $b$ contained in $F \smallsetminus \bigcup_{i=1}^n D_i$ are disjoint, so we need to modify the adjacency relation (as suggested in the beginning of the section) to get a connected graph. If $|\mathcal Q|\leq 3$, the graph is empty.

If $F$ has positive genus, the Dehn twists about the simple closed curves shown in Figure \ref{fig:generators} generate the pure mapping class group $\pmcg(F)$ \cite[Corollary 4.16]{FarbPrimer}. If $F$ has genus-zero and $p$ punctures, we can obtain $F$ by deleting the vertices of a convex $p$-gon in $\br^2 \cup \{\infty\}$.  
The pure mapping class group is then generated by Dehn twists about the simple closed curves obtained as boundaries of  regular neighborhoods of the sides and the diagonals of the polygon \cite[Theorem 4.10]{MargalitGeometric}.
A standard classification of surfaces argument guarantees we can choose generators of $\pmcg(F)$ such that each Dehn twist in the generating set fixes all but at most one of the curves in $C$.
If $t$ is a Dehn twist in our generating set, then there exists $b\in\sep_2(F,Q)$ disjoint from $a$ and fixed by $t$, so $a$ and $t(a)$ have distance at most two.

Next, we claim that for every element $b\in \sep_2(F,\mathcal Q)$ there exists $g \in \pmcg(S)$ such that there exists a path between $b$ and $g(a)$ in $\sep_2(F,\mathcal Q)$.  
The $\pmcg(F)$-orbit of $b$ is determined by the genus of a component of $F\smallsetminus b$ and the partition $b$ gives of $\Ends(F)$.
Therefore, there exists $h \in \pmcg(F)$ such that $h(b)$ is contained in $\Sigma = F\smallsetminus \bigcup D_i$. 
By Theorem \ref{thm:sephyp}, $\sep_2(\Sigma, C)$ is connected.
As $\sep_2(\Sigma, C)$ is a subgraph of $\sep_2(F, \mathcal Q)$, there is a path connecting $a$ and $h(b)$.
By taking $g = h^{-1}$, we've proved the claim, which guarantees $\sep_2(F, \mathcal Q)$ is connected by Lemma \ref{lem:putman}.

As in the proof of Theorem \ref{thm:arcs}, we see that any pseudo-Anosov element of $\pmcg(F)$ acts on $\sep_2(F, \mathcal Q)$ with positive translation length.
\end{proof}

Given the statement of Theorem \ref{thm:sephyp}, it natural to ask:

\begin{Question}
Is $\sep_2(F, \mathcal Q)$ Gromov hyperbolic?  %If so, can $\delta$ be chosen independent of $F$ and $Q$?
\end{Question}

\section{Quasi-retracts}\label{sec:quasiretracts}
In \cite{AramayonaArc}, the authors prove a particular induced subgraph of the arc graph of a surface with punctures is hyperbolic, which generalizes a result of Bavard \cite{BavardHyperbolic}.  
Their proof is an elegant use of the fact that arc graphs of finite-type surfaces are uniformly hyperbolic \cite{HenselSlim}.  
The goal of this section is to abstract their argument for general use.

\subsection{Quasi-retracts}
Given a set $X$, let $\frak{P}(X)$ be the power set of $X$.  
For a geodesic metric space $(X, d_X)$ and a path-connected subspace $Y\subset X$, let $d_Y$ be the induced path metric on $Y$ (that is, the distance between two points in $Y$ is the infimum of the $d_X$-lengths of paths in $Y$ between the two points).

\begin{Def}
Let $(X,d)$ be a geodesic metric space and let $Y\subset X$ be path-connected.  
A function $r\co X \to \frak{P}(Y)\smallsetminus \{\emptyset\}$ is an \emph{$(A,B)$-quasi-retract} for $A,B\geq0$ if:
\begin{itemize}
\item
$\mathrm{diam}(r(x)) \leq A$ for all $x \in X$,
\item
$d_Y(r(x),r(x')) \leq B$ whenever $d_X(x,x') \leq 1$, and
\item
$r(y) = \{y\}$ for all $y \in Y$.
\end{itemize}
If $Y$ is a $(A,B)$-quasi-retract for some $A,B$, then we say $Y$ is a \emph{quasi-retract} of $X$. 
\end{Def} 
Similar notions (in particular that of a coarsely Lipschitz retract) appear in the literature (see for instance \cite{MjCannon-Thurston}); however, the above definition is better suited to our setting.

It follows easily from the definition that:

\begin{Lem}\label{lem:qi}
If $Y$ is an $(A,B)$-quasi-retract of $X$, then the inclusion $Y\hookrightarrow X$ is a $(2A+B,0)$-quasi-isometry.
\end{Lem}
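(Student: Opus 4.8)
The plan is to unwind the definitions and show that the inclusion $\iota\co Y \hookrightarrow X$ is both coarsely distance-preserving (a quasi-isometric embedding) and coarsely surjective, with the stated constants. Since $Y$ carries the induced path metric $d_Y$ and $X$ carries $d_X$, and $Y \subset X$, the easy direction is $d_X(y,y') \leq d_Y(y,y')$ for all $y,y' \in Y$, because any path in $Y$ is a path in $X$. Coarse surjectivity is immediate as well: the inclusion is in fact \emph{onto} a subset that is $0$-dense in its image, so the only real content is the upper bound $d_Y(y,y') \leq (2A+B)\,d_X(y,y')$, which will come from the quasi-retract $r$.

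First I would prove the key inequality along a geodesic. Fix $y,y' \in Y$ and let $\gamma$ be a geodesic in $X$ from $y$ to $y'$, so $d_X(y,y') = \mathrm{length}(\gamma) =: n$. Choose points $y = x_0, x_1, \ldots, x_n = y'$ along $\gamma$ with $d_X(x_{i-1}, x_i) \leq 1$ (subdividing the geodesic into unit-or-shorter steps). Applying the second quasi-retract axiom to each consecutive pair gives $d_Y(r(x_{i-1}), r(x_i)) \leq B$. Since $y, y' \in Y$, the third axiom gives $r(y) = \{y\}$ and $r(y') = \{y'\}$, so $r(x_0) = \{y\}$ and $r(x_n) = \{y'\}$. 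The subtlety is that each $r(x_i)$ is a \emph{subset} of $Y$, not a point; to chain the estimates I would pick a representative point $p_i \in r(x_i)$ for each $i$ (with $p_0 = y$, $p_n = y'$) and bound $d_Y(p_{i-1}, p_i)$. By the first axiom $\mathrm{diam}(r(x_i)) \leq A$, and the second axiom gives $d_Y(r(x_{i-1}), r(x_i)) \leq B$; combining these, a point in $r(x_{i-1})$ is within $d_Y$-distance at most $A + B + A$ of a point in $r(x_i)$ in the worst case, but more carefully, passing through one point of each set costs at most $B$ per step plus at most $A$ to move within a set, so summing the triangle inequality over the $n$ steps yields roughly $d_Y(y,y') \leq nB + (\text{diameter corrections})$. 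I would organize the bookkeeping so the total comes out to $(2A+B)\,n = (2A+B)\,d_X(y,y')$, consistent with the stated constants.

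The main obstacle, and the step requiring the most care, is exactly this bookkeeping of the additive $A$-terms against the multiplicative-looking target constant $2A+B$. The cleanest way I would handle it is to note that for each step $d_Y(r(x_{i-1}), r(x_i)) \leq B$ means there exist points realizing that bound; inserting the diameter bound $A$ twice (once to get from my chosen $p_{i-1}$ to the near point in $r(x_{i-1})$, once to get from the near point in $r(x_i)$ to $p_i$) would give $2A + B$ per step, which overshoots. To recover the correct constant I would instead amortize: move within each set only once by choosing the $p_i$ compatibly, so that the $A$-cost is paid only at the two endpoints of the chain rather than at every step. This gives $d_Y(y,y') \leq nB + 2A = B\,d_X(y,y') + 2A \leq (2A+B)\,d_X(y,y')$ once $d_X(y,y') \geq 1$ (the case $d_X(y,y')=0$ being trivial since then $y=y'$). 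Thus the inclusion distorts distances by at most a factor $2A+B$ with additive constant $0$.

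Finally I would assemble the two inequalities $d_X(y,y') \leq d_Y(y,y') \leq (2A+B)\,d_X(y,y')$ into the statement that $\iota$ is a $(2A+B,0)$-quasi-isometric embedding, and observe that since every point of $X$ lies within distance $0$ of $Y$ only when it is already in $Y$, coarse surjectivity in the quasi-isometry sense must be read as surjectivity onto $Y$ with the inclusion being a quasi-isometry \emph{onto its image} $Y$; as $Y$ is the codomain here, $\iota$ is genuinely a $(2A+B,0)$-quasi-isometry, completing the proof.
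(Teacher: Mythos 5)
The paper gives no proof of this lemma (it is introduced with ``It follows easily from the definition''), so there is no argument of the authors to compare against; your strategy of subdividing a geodesic into unit steps and chaining the two axioms is the intended one, and your \emph{first} computation already suffices. For consecutive points with $d_X(x_{i-1},x_i)\leq 1$, any $p\in r(x_{i-1})$ and $q\in r(x_i)$ satisfy $d_Y(p,q)\leq \diam\bigl(r(x_{i-1})\bigr)+d_Y\bigl(r(x_{i-1}),r(x_i)\bigr)+\diam\bigl(r(x_i)\bigr)\leq A+B+A$, so picking one point $p_i$ in each $r(x_i)$ (with $p_0=y$, $p_n=y'$, using the third axiom) gives $d_Y(y,y')\leq n(2A+B)=(2A+B)\,d_X(y,y')$ whenever $d_X(y,y')=n$, as happens for vertices of a graph, which is the only setting in which the lemma is applied. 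The per-step cost $2A+B$ does not ``overshoot'': the target constant is multiplicative with additive constant $0$, so $n(2A+B)$ is exactly what is wanted. Combined with the trivial inequality $d_X\leq d_Y$, this gives the $(2A+B,0)$-quasi-isometric embedding.

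The genuine flaw is the ``amortization'' you substitute for this correct estimate: the claim $d_Y(y,y')\leq nB+2A$ is false in general. You cannot choose the $p_i$ so that movement inside the sets $r(x_i)$ is paid only at the two endpoints, because the point of $r(x_i)$ nearest to $r(x_{i-1})$ need not be the point of $r(x_i)$ nearest to $r(x_{i+1})$; the honest bound from that bookkeeping is $nB+(n-1)A$, with an $A$-term for every intermediate set. Since $nB+(n-1)A\leq n(2A+B)$ the conclusion survives, but you should delete the amortization and keep the direct per-step bound. A second, smaller issue is your discussion of coarse surjectivity, which is circular: the image of the inclusion is $Y$ itself, and the quasi-retract axioms do not force $Y$ to be coarsely dense in $X$ (take $Y$ a single point and $r$ the constant map, a $(0,0)$-quasi-retract of any $X$). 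The lemma should therefore be read as asserting a quasi-isometric embedding, which is exactly how the paper uses it in Corollary \ref{cor:infinite-diameter} and in embedding $\cc(F)$ into $\sep_2(S,\cp)$.
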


As a consequence, we get the following.
\begin{Cor}\label{cor:infinite-diameter}
Let $Y$ be a quasi-retract of $X$. If $Y$ has infinite diameter, then so does $X$.
\end{Cor}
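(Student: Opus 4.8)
The plan is to derive this directly from Lemma \ref{lem:qi}, which does all of the real work; Corollary \ref{cor:infinite-diameter} is then essentially a formal consequence of the fact that quasi-isometric embeddings distort distances by only a bounded multiplicative factor (with zero additive error in this case). Since $Y$ is a quasi-retract of $X$, by definition it is an $(A,B)$-quasi-retract for some constants $A, B \geq 0$, and Lemma \ref{lem:qi} then tells us that the inclusion $\iota \co Y \hookrightarrow X$ is a $(2A+B, 0)$-quasi-isometry. Set $\lambda = 2A + B$.

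First I would unwind the definition of a $(\lambda, 0)$-quasi-isometry for the inclusion map. The relevant half of the defining inequality is the lower bound
\[
d_X(y, y') \geq \frac{1}{\lambda}\, d_Y(y, y')
\]
for all $y, y' \in Y$ (here I use that $\iota(y) = y$ as points of $X$). This is the only estimate needed: it says that $d_X$-distances between points of $Y$ cannot be much smaller than their $d_Y$-distances.

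Next I would use the hypothesis that $Y$ has infinite diameter with respect to $d_Y$. Given any $N > 0$, infinite diameter yields points $y, y' \in Y$ with $d_Y(y, y') > \lambda N$, and the displayed inequality then gives $d_X(y, y') > N$. Since $N$ was arbitrary and $y, y'$ are in particular points of $X$, this shows $\diam_X(X) = \infty$, completing the argument.

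There is no genuine obstacle here, as the statement is an immediate corollary of Lemma \ref{lem:qi}; the only point requiring a word of care is the choice of metric implicit in ``infinite diameter.'' The diameter of $Y$ is measured in the induced path metric $d_Y$, while the conclusion concerns $d_X$. Since the path metric always dominates the ambient metric ($d_Y \geq d_X$ on $Y$), and the quasi-isometry supplies the reverse inequality up to the factor $\lambda$, the two notions of unboundedness coincide on $Y$, so no ambiguity arises and the implication goes through as stated.
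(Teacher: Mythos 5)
Your proof is correct and follows exactly the route the paper intends: the paper states Corollary \ref{cor:infinite-diameter} as an immediate consequence of Lemma \ref{lem:qi} without further argument, and your unwinding of the lower quasi-isometry bound $d_X(y,y') \geq \frac{1}{2A+B}\, d_Y(y,y')$ is precisely the implicit justification. Your remark distinguishing the induced path metric $d_Y$ from the restriction of $d_X$ is a sensible point of care and does not affect the conclusion.
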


We now want to describe a criterion for a space to be Gromov hyperbolic. We will need the following definition.

\begin{Def}
Let $(X, d_X)$ be a geodesic metric space.  
A cover $\mathcal{Y}$ of $X$ is \emph{$\delta$-hyperbolic} if
\begin{enumerate}
\item
each $Y\in \mathcal{Y}$ is path-connected subspace of $X$,
\item
there exists $\delta >0$ such that each element $Y\in \mathcal{Y}$ is $\delta$-hyperbolic (with respect to the induced path metric), and
\item
for any geodesic triangle $T$ contained in $X$, there exists $Y\in \mathcal{Y}$ containing $T$.   
\end{enumerate}
\end{Def}
With this definition at hand, we are ready to state a criterion for hyperbolicity.

\begin{Lem}
\label{lem:hyperbolic}
If a geodesic metric space admits a  $\delta$-hyperbolic cover, then it is $\delta$-hyperbolic.
\end{Lem}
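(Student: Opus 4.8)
The plan is to show directly that a geodesic metric space $X$ equipped with a $\delta$-hyperbolic cover $\mathcal{Y}$ is itself $\delta$-hyperbolic, using the thin-triangles definition of hyperbolicity. Since the three defining conditions of a $\delta$-hyperbolic cover are tailored exactly to this purpose, the argument should be short and nearly formal. The key conceptual point is that condition (3) lets us funnel any geodesic triangle in $X$ into a single element $Y$ of the cover, where condition (2) provides the thinness estimate we want, and condition (1) guarantees that ``thinness inside $Y$'' can be compared back to thinness inside $X$.

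First I would fix a geodesic triangle $T$ in $X$ with vertices $x_1, x_2, x_3$ and geodesic sides $[x_i,x_j]$ taken in $X$. By condition (3) of the definition, there exists some $Y\in\mathcal{Y}$ with $T\subset Y$; in particular all three sides and all three vertices lie in $Y$. The subtlety to address is that the three sides of $T$ are $X$-geodesics, so a priori they need not be geodesics of the induced path metric $d_Y$. However, since $Y$ is path-connected and $d_Y \geq d_X$ pointwise on $Y$, each side of $T$ is a path in $Y$ whose $d_X$-length realizes the $d_X$-distance between its endpoints; what matters for the thin-triangles condition is only that each point of one side is $X$-close to the union of the other two sides. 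So I would invoke $\delta$-hyperbolicity of $(Y,d_Y)$ applied to a geodesic triangle in $Y$ spanning $x_1,x_2,x_3$, and then compare.

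The main step is the comparison between $d_Y$-thinness and $d_X$-thinness. Working inside $(Y, d_Y)$, form a $d_Y$-geodesic triangle $T'$ on the same vertices $x_1,x_2,x_3$; by condition (2) each side of $T'$ is contained in the $\delta$-neighborhood (for $d_Y$) of the union of the other two. Because any two sides $T$ and $T'$ of $X$ and $Y$ respectively connect the same endpoints and stay in $Y$, one shows they fellow-travel: a standard argument using that both are (quasi-)geodesics of comparable length in the hyperbolic space $(Y,d_Y)$, together with $d_X\leq d_Y$, lets us transfer the $\delta$-thinness of $T'$ to the $X$-sides of $T$ at the cost of an additive constant depending only on $\delta$. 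Since $d_X \leq d_Y$ everywhere on $Y$, the resulting neighborhood bounds in $d_Y$ immediately give bounds in $d_X$. I expect this fellow-traveling comparison to be the main obstacle, as it is the only place where one must be careful that $X$-geodesics and $Y$-geodesics are genuinely different objects.

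Finally I would assemble these estimates into the statement that every $X$-geodesic triangle is $\delta'$-thin for a constant $\delta'$ depending only on $\delta$; to match the sharp statement in the lemma one checks that the constant can be taken to be $\delta$ itself by an appropriate choice of the (equivalent) definition of hyperbolicity being used, or one simply records that hyperbolicity with constant depending only on $\delta$ is what is needed downstream in Section~\ref{sec:mainthminfinite}. The role of condition (1), bounding $\mathrm{diam}(r(x))$, enters only through the earlier \emph{quasi-retract} machinery of Lemma~\ref{lem:qi}; for the present lemma the essential inputs are conditions (2) and (3), so I would emphasize those in the writeup and keep the argument self-contained modulo the standard comparison of geodesics with the same endpoints in a hyperbolic space.
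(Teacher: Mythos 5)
Your overall structure matches the paper's, but you have manufactured a difficulty that is not there, and the machinery you introduce to resolve it contains the real gap. You worry that the $X$-geodesic sides of $T$ ``need not be geodesics of the induced path metric $d_Y$'' and therefore build a second triangle $T'$ out of $d_Y$-geodesics and propose a fellow-traveling comparison. In fact an $X$-geodesic segment $\gamma$ from $p$ to $q$ that happens to lie in $Y$ \emph{is} automatically a $d_Y$-geodesic: its length (which is the same whether measured as a path in $X$ or in $Y$) equals $d_X(p,q)$, while $d_X(p,q)\leq d_Y(p,q)\leq \mathrm{length}(\gamma)=d_X(p,q)$, so $d_Y(p,q)=d_X(p,q)$ and $\gamma$ realizes it. Hence $T$ itself is a geodesic triangle in $(Y,d_Y)$, condition (2) makes it $\delta$-slim for $d_Y$, and $d_X\leq d_Y$ transfers $\delta$-slimness to $X$ with no loss. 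This is the paper's entire proof.

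The gap in your version is the fellow-traveling step: to compare $T$ with $T'$ you need to know that each side of $T$ is a quasi-geodesic of $(Y,d_Y)$ with constants independent of $Y$, and you assert this without justification. The only readily available justification is the computation above, which shows the side is an honest $d_Y$-geodesic and renders the whole detour unnecessary. Moreover, even granting the fellow-traveling, your argument only delivers $\delta'$-hyperbolicity for some $\delta'$ depending on $\delta$, whereas the lemma asserts $\delta$-hyperbolicity with the same constant (your closing paragraph acknowledges this mismatch but does not resolve it). Replace the $T$-versus-$T'$ comparison with the one-line observation that $d_Y$ and $d_X$ agree on pairs of points joined by an $X$-geodesic inside $Y$, and the proof closes cleanly. (Your remark that condition (1) of the quasi-retract definition is irrelevant here is correct but also moot: the hypothesis of Lemma \ref{lem:hyperbolic} is a $\delta$-hyperbolic cover, whose three conditions are not those of a quasi-retract.)
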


\begin{proof}
Let $X$ be a geodesic metric space with $\delta$-hyperbolic cover $\mathcal Y$.
Let $T$ be a geodesic triangle in $X$ and let $Y\in \mathcal{Y}$ such that $T\subset Y$.  
It follows that $T$ is geodesic in $Y$ implying it is $\delta$-slim.  
As the inclusion $Y \hookrightarrow X$ is distance non-increasing, $T$ is $\delta$-slim in $X$.  
It follows that $X$ is $\delta$-hyperbolic. 
\end{proof}

Motivated by the results in \cite{AramayonaGeometry}, we point out the following observation:

\begin{Lem}\label{lem:uniform}
Let $(X, d_X)$ be a geodesic metric space.  
Suppose there exists a cover $\mathcal{Y}$ of $X$ by quasi-retracts of $X$ satisfying: 
\begin{enumerate}
\item
the elements of $\mathcal{Y}$ are uniformly quasi-retracts of $X$, i.e.\ there exists constants $A,B\geq 0$ such that each $Y\in\mathcal{Y}$ is an $(A,B)$-quasi-retract of $X$, 
\item
for any geodesic triangle $T$ contained in $X$, there exists $Y\in \mathcal{Y}$ containing $T$. 
\end{enumerate}  
Then $X$ is $\delta$-hyperbolic if and only if each element of $\mathcal{Y}$ is $\delta$-hyperbolic.
\end{Lem}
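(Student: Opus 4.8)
The plan is to prove the two implications separately, observing that the forward implication is essentially a restatement of Lemma \ref{lem:hyperbolic} while the reverse implication is an application of the quasi-isometry invariance of Gromov hyperbolicity fed by Lemma \ref{lem:qi}. One should be aware at the outset that the two directions treat the constant $\delta$ asymmetrically: the implication ``each $Y$ is $\delta$-hyperbolic $\Rightarrow X$ is $\delta$-hyperbolic'' preserves the constant exactly, whereas the converse only recovers hyperbolicity of each $Y$ with a constant $\delta'$ depending on $\delta$, $A$, and $B$.

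For the direction ``each $Y\in\mathcal{Y}$ is $\delta$-hyperbolic implies $X$ is $\delta$-hyperbolic,'' I would simply verify that $\mathcal{Y}$ is a $\delta$-hyperbolic cover in the sense of the definition preceding Lemma \ref{lem:hyperbolic}. Each $Y$ is path-connected, since quasi-retracts are by definition path-connected subspaces carrying the induced path metric $d_Y$; each $Y$ is $\delta$-hyperbolic with respect to $d_Y$ by hypothesis; and condition (2) of the present lemma says precisely that every geodesic triangle of $X$ lies in some $Y$. Lemma \ref{lem:hyperbolic} then gives that $X$ is $\delta$-hyperbolic, with the same constant.

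For the converse, suppose $X$ is $\delta$-hyperbolic and fix $Y\in\mathcal{Y}$. By hypothesis (1), $Y$ is an $(A,B)$-quasi-retract of $X$ with $A,B$ independent of $Y$, so Lemma \ref{lem:qi} shows the inclusion $(Y,d_Y)\hookrightarrow X$ is a $(2A+B,0)$-quasi-isometry whose constant is uniform over $\mathcal{Y}$. Since $X$ is geodesic and $\delta$-hyperbolic and $(Y,d_Y)$ is a length space quasi-isometric to $X$, the quasi-isometry invariance of Gromov hyperbolicity (see \cite[\S III.H.1]{BridsonMetric}) yields that each $Y$ is $\delta'$-hyperbolic, where $\delta'$ depends only on $\delta$ and $2A+B$ and hence is uniform over $\mathcal{Y}$. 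In all our applications the members of $\mathcal{Y}$ are genuine subgraphs, hence honestly geodesic, so the invariance theorem applies without any length-space technicalities.

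The main obstacle is the bookkeeping in the reverse direction: one must check that the resulting hyperbolicity constant for the $Y$'s is genuinely uniform, which is exactly what the uniformity hypothesis (1) buys us through the uniform quasi-isometry constant furnished by Lemma \ref{lem:qi}. A secondary technical point to handle with care is that hyperbolicity must be transported with respect to the induced path metric $d_Y$ rather than the subspace metric; since Lemma \ref{lem:qi} is already stated for $(Y,d_Y)$, this is arranged automatically, but it is worth flagging that a length-space (rather than strictly geodesic) formulation of quasi-isometry invariance may in principle be needed in the full abstract generality, even though it is unnecessary in the graph-theoretic applications we have in mind.
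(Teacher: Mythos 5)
Your proposal is correct and follows essentially the same route as the paper: the forward direction applies Lemma \ref{lem:hyperbolic} after checking that $\mathcal{Y}$ is a $\delta$-hyperbolic cover, and the reverse direction uses Lemma \ref{lem:qi} to obtain a uniform quasi-isometric embedding of each $Y$ and invokes quasi-isometry invariance of hyperbolicity. Your explicit remarks about the asymmetry of the constant $\delta$ and the length-space technicality are careful refinements of points the paper leaves implicit, but the argument is the same.
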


\begin{proof}
If $X$ is hyperbolic, then each $Y\in \mathcal{Y}$ is quasi-isometrically embedded with the quasi-isometry constants being independent of $Y$.
Therefore, the elements of $Y\in \mathcal{Y}$ are uniformly hyperbolic.
On the other hand, if the elements of $\mathcal{Y}$ are $\delta$-hyperbolic, then $\mathcal{Y}$ is a $\delta$-hyperbolic cover of $X$.
It follows from Lemma \ref{lem:hyperbolic} that $X$ is $\delta$-hyperbolic.
\end{proof}

\subsection{Witnesses} 
An important notion for studying the geometry of subgraphs of the curve graph -- introduced in \cite{MasurDisk} -- is that of a \emph{witness}\footnote{A \emph{witness} was originally referred to as a \emph{hole}; this renaming was recommended by Saul Schleimer.}. 
In order to define witnesses, we first need the notion of subsurface projection.
\begin{Def}
Let $X$ be a non-annular\footnote{The definition can be modified to include the case of annuli; however, it is not needed here.} essential subsurface of $S$. The \emph{subsurface projection} $\pi_X:\cc(S)\to\mathfrak{P}(\cc(X))$ is given by:
\begin{itemize}
\item $\pi_X(a)=\{a\}$ if $a\subset X$,
\item $\pi_X(a)=\emptyset$ if $a\subset S\setminus X$,
\item $\pi_X(a)$ is the set of curves that can realized as a boundary of a regular neighborhood of $\alpha\cup\partial X$, where $\alpha$ is a representative of $a$ intersecting $\partial X$ minimally.
\end{itemize}
\end{Def}

\begin{Def}
Let $S$ be a surface and $\Gamma$ a subgraph of $\cc(S)$.
A \textit{witness} for $\Gamma$ is a subsurface $X \subset S$ such that every vertex in $\Gamma$ intersects $X$ nontrivially (that is, for every $a\in\Gamma$ the projection $\pi_X(a)$ is nonempty).
A witness $X$ is \emph{ideal} if the subsurface projection $\Gamma\to\mathfrak{P}(\cc(X))\smallsetminus\{\emptyset\}$ is a quasi-retract.
\end{Def}

\section{Theorem \ref{thm:main} for infinite-type surface}\label{sec:mainthminfinite}
We will now finish the proof of Theorem \ref{thm:main}.
For the entirety of this section, let $S$ be an infinite-type surface and let $\cp = \{P_1, \ldots, P_n\}$ be a finite collection of pairwise disjoint closed subsets of $\Ends(S)$ with $n \geq 5$.

\begin{Lem}
$\sep_2(S,\cp)$ is connected.
\end{Lem}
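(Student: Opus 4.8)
The plan is to reduce the infinite-type statement to the already-established finite-type case (Theorem \ref{thm:sephyp} and Proposition \ref{prop:sep2F}) by confining the curves of interest to a compact subsurface. Since $\cp = \{P_1, \dots, P_n\}$ is a \emph{finite} collection of pairwise disjoint \emph{closed} subsets of $\Ends(S)$ and $\Ends(S)$ is compact, totally disconnected, and Hausdorff by Proposition \ref{prop:ends}, I can separate the $P_i$ from one another by disjoint clopen neighborhoods $\cp = \{U_1, \dots, U_n\}$ with $P_i \subset U_i$. First I would pass to a compact (or at least finite-type) subsurface $F \subset S$ whose complementary components realize this clopen partition: choose pairwise disjoint separating curves $c_1, \dots, c_n$ in $S$ so that one component of $S \smallsetminus c_i$ carries exactly the ends in $U_i$ (and whatever genus accumulates there), and let $F$ be the finite-type subsurface bounded by $c_1, \dots, c_n$ lying on the "central" side. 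The key point is that any vertex $c \in \sep_2(S,\cp)$, after an isotopy, either already lies in such an $F$ or can be compared to one that does.

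The main steps, in order, are as follows. First, establish that $\sep_2(F, C)$, where $C = \{c_1, \dots, c_n\}$ records the partition induced on the relevant ends/boundary data, is a subgraph of $\sep_2(S, \cp)$: a separating curve of $F$ that partitions the $c_i$ into the right configuration still partitions $\cp$ correctly in $S$, since the ends of $S$ are distributed among the $c_i$ exactly as the $P_i$ require. By Theorem \ref{thm:sephyp} (or Proposition \ref{prop:sep2F} if the $P_i$ are not singletons), $\sep_2(F, C)$ is connected, so all vertices coming from $F$ lie in a single component. Second, for an arbitrary vertex $b \in \sep_2(S, \cp)$, I would find a homeomorphism of $S$ carrying $b$ into $F$: because $b$ is separating and induces a partition of $\cp$ into two blocks each of size $\geq 2$ with no $P_i$ split, I can realize the same combinatorial data inside $F$ and use a change-of-coordinates principle (the classification of infinite-type surfaces, as in \S\ref{sec:background}) to produce $\varphi \in \mcg(S)$ with $\varphi(b) \in \sep_2(F, C)$. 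The delicate point is ensuring $\varphi$ can be taken to \emph{fix} $\cp$ setwise (so that it genuinely maps $\sep_2(S,\cp)$ to itself), which follows because $\varphi$ need only respect the clopen partition, not each end individually.

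The last step is to connect $b$ to $\varphi(b)$ within $\sep_2(S, \cp)$. Here I would either invoke Putman's connectivity criterion (Lemma \ref{lem:putman}), exactly as in the proof of Proposition \ref{prop:sep2F}, taking the base vertex in $\sep_2(F,C)$ and checking that a generating set for the relevant subgroup of $\mcg(S)$ moves the base vertex a bounded distance, or argue directly that $b$ and $\varphi(b)$ are joined by a short path using an auxiliary curve disjoint from both. I expect the \textbf{main obstacle} to be the second step: producing the mapping class $\varphi$ pushing an arbitrary $b$ into the fixed finite-type piece $F$ while respecting the $\mcg(S)$-invariant structure of $\cp$. Unlike the finite-type setting, $\mcg(S)$ is uncountable and has no explicit generating set, so I cannot run Putman's criterion verbatim; instead I must lean on the change-of-coordinates principle furnished by the Kerékjártó classification, checking carefully that the two sides of $b$ and the two sides of the target curve in $F$ have homeomorphic complements (matching genus and end-space data on each side). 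Once this topological matching is verified, the connectedness assertion follows by transporting everything into the connected finite-type graph $\sep_2(F, C)$.
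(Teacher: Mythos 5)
Your reduction to the finite-type case is the right instinct, and your observation that one should surround the $P_i$ by disjoint clopen sets and cut off a finite-type subsurface $F$ whose complementary components each meet at most one $P_i$ is exactly the enlargement step the paper uses. But your proof has a genuine gap at the step you yourself flag as the main obstacle: having fixed $F$ \emph{in advance}, you must (a) produce $\varphi\in\mcg(S)$ carrying an arbitrary vertex $b$ into $F$ while preserving each $P_i$ (so that $\varphi$ actually acts on $\sep_2(S,\cp)$), and (b) connect $b$ to $\varphi(b)$ inside $\sep_2(S,\cp)$. Neither proposed resolution of (b) works as stated: Putman's criterion (Lemma \ref{lem:putman}) needs a generating set for the acting group, and as the paper itself remarks there is no well-understood generating set for a big mapping class group; and the ``auxiliary curve disjoint from both $b$ and $\varphi(b)$'' need not exist in $\sep_2(S,\cp)$, since $b$ and $\varphi(b)$ may fill a subsurface whose complementary pieces do not carry enough of the $P_i$ to support a vertex of the graph. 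Step (a) is also delicate for arbitrary closed sets $P_i$, since the change-of-coordinates principle does not automatically respect a prescribed decomposition of $\Ends(S)$.

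The missing idea is simply to let $F$ depend on the pair of vertices rather than fixing it beforehand. Given non-adjacent $a,b\in\sep_2(S,\cp)$, take $F$ to be the finite-type subsurface they \emph{fill}, enlarged (using the clopen separation of the $P_i$, as in your first paragraph) so that each component $V$ of $S\smallsetminus F$ has $V^*$ meeting at most one $P_i$. Grouping the boundary components of $F$ according to which $P_i$ their complementary component meets produces a collection $\mathcal Q$ with $|\mathcal Q|=|\cp|\geq 5$, and $\sep_2(F,\mathcal Q)$ is a connected subgraph of $\sep_2(S,\cp)$ (by Proposition \ref{prop:sep2F}) that already contains both $a$ and $b$. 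No mapping class, no change of coordinates, and no Putman argument is needed; the connectivity is inherited directly from the finite-type graph.
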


\begin{proof}
Let $a,b$ be non-adjacent elements of $\sep_2(S,\cp)$ and let $F\subset S$ be the surface that they fill.  
As $a,b$ are compact, $F$ is finite-type.  
By possibly enlarging $F$, we may assume that if $V$ is a component of $S\smallsetminus F$, then $V^* \subset \Ends(S)$ intersects at most one of the $P_i$.
To see the existence of this enlargement, observe that there exist pairwise disjoint clopen subsets $U_i$ of  $\Ends(S)$ such that $P_i \subset U_i$.
By intersecting $V^*$ with $U_i$, we see there is a compact surface $F' \supset F$ and a component $V'$ of $S\smallsetminus F'$ with $(V')^* = V^* \cap U_i$; by construction, $(V')^* \cap U_j = \emptyset$ for $j \neq i$.
Repeating this with every component of $S \smallsetminus F$ yields the desired enlargement.

For each $i\in \{1, \ldots, n\}$, define $\mathcal{V}_i$ to be the collection of components of $S\smallsetminus F$ satisfying $V\in \mathcal V_i$ if and only if $V^*\cap P_i \neq \emptyset$.  We then set
\[
Q_i = \bigcup_{V\in \mathcal V_i} \{c : c \text{ is a component of } \partial V\}
\] 
and $\mathcal Q = \{Q_1, \ldots, Q_n\}$.  
By Proposition \ref{prop:sep2F}, $\sep_2(F, \mathcal Q)$ is a connected subgraph of $\sep_2(S, \cp)$ containing both $a$ and $b$ yielding the result.
\end{proof}

\begin{figure}[t]
\includegraphics{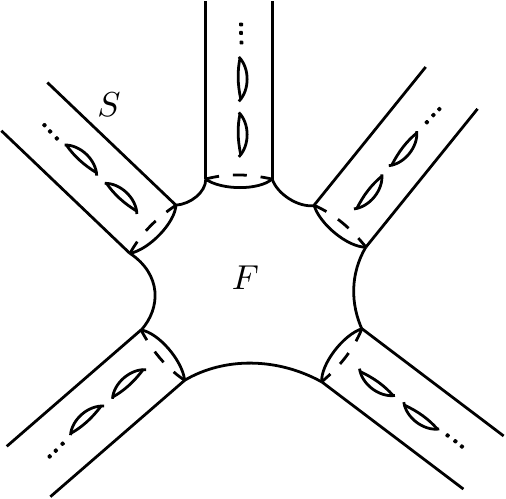}
\caption{The subsurface $F$ is an ideal witness for $\sep_2(S,\Ends(S))$.}
\label{fig:infinite}
\end{figure}

\begin{Lem}\label{lem:ideal2}
There exists a compact ideal witness for $\sep_2(S,\cp)$ with $|\cp|$ boundary components.
\end{Lem}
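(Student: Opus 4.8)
The plan is to produce $F$ from a clopen partition of $\Ends(S)$ adapted to $\cp$, check the witness property by a short separation argument, and then verify idealness by exhibiting the subsurface projection $\pi_F$ as a quasi-retract of $\sep_2(S,\cp)$ onto the finite-type graph $\sep_2(F,\mathcal Q)$, whose geometry is controlled by Proposition \ref{prop:sep2F} and Theorem \ref{thm:sephyp}. For the construction, first I would separate the members of $\cp=\{P_1,\dots,P_n\}$: since $\Ends(S)$ is compact and totally disconnected (Proposition \ref{prop:ends}), the pairwise disjoint closed sets $P_i$ have pairwise disjoint clopen neighborhoods, and after distributing the remaining ends I may enlarge these to a clopen partition $U_1,\dots,U_n$ of $\Ends(S)$ with $P_i\subseteq U_i$ and $U_i\cap P_j=\emptyset$ for $j\neq i$. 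I then realize the partition by pairwise disjoint separating curves $c_1,\dots,c_n$, where $c_i=\partial V_i$ and $V_i$ is the complementary component with $V_i^*=U_i$; such curves exist and may be taken disjoint by the classification of surfaces and the theory of ends. Setting $F=S\smallsetminus\bigsqcup_i V_i$ gives a connected surface with exactly $n=|\cp|$ boundary components, and $F$ is compact: it has no ends, and as every end accumulated by genus lies in some $U_i$, hence in some $V_i$, the genus of $F$ is finite.

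Next I would verify that $F$ is a witness. Suppose a vertex $a$ of $\sep_2(S,\cp)$ satisfies $\pi_F(a)=\emptyset$, so that $a$ is isotopic into $S\smallsetminus F=\bigsqcup_i V_i$ and, being connected, lies in a single $V_i$. Then the component $W$ of $S\smallsetminus a$ contained in $V_i$ has $\Ends(W)\subseteq V_i^*=U_i$, and since $U_i\cap P_j=\emptyset$ for $j\neq i$, the set $\Ends(W)$ contains at most the one element $P_i$ of $\cp$. This contradicts condition (ii) in the definition of $\sep_2(S,\cp)$, which requires every complementary component to carry at least two elements of $\cp$. Hence every vertex intersects $F$ essentially, so $F$ is a witness. (The same count shows each $c_i\notin\sep_2(S,\cp)$, so boundary-parallel curves cause no trouble.)

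For idealness, put $\mathcal Q=\{\{c_1\},\dots,\{c_n\}\}$ and let $Y=\sep_2(F,\mathcal Q)$, which I view as a subgraph of $\sep_2(S,\cp)$: a curve in $F$ separating the $c_i$ into two groups of size at least two separates the corresponding sets $P_i\subseteq V_i^*$ in the same way, hence is a vertex of $\sep_2(S,\cp)$. By Proposition \ref{prop:sep2F} (and Theorem \ref{thm:sephyp} when the $P_i$ are singletons), $Y$ is connected and of infinite diameter. It then remains to show $\pi_F$ is a quasi-retract onto $Y$. The identity condition is immediate, since curves of $Y$ lie in $F$ and so are fixed by $\pi_F$. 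For the bounded-image and coarse-Lipschitz conditions I would invoke the standard subsurface-projection estimates: the projection of a single curve has diameter at most two in $\cc(F)$, and two disjoint curves (or, when $n=4$, curves meeting at most twice) have projections a uniformly bounded distance apart in $\cc(F)$.

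The main obstacle is that these estimates are measured in $\cc(F)$, whereas the quasi-retract conditions must hold in the intrinsic path metric $d_Y$ of the subgraph $Y=\sep_2(F,\mathcal Q)$. I therefore need a uniformly coarsely Lipschitz retraction $\cc(F)\to Y$, which amounts to two facts: that $\pi_F(a)$ always lies a uniformly bounded $\cc(F)$-distance from $Y$, and that $\cc(F)$-close vertices of $Y$ are $d_Y$-close. The first holds because $a$ induces a partition $A\mid B$ of $\{V_1,\dots,V_n\}$ with $|A|,|B|\geq 2$, so a curve of $F$ realizing this partition---a vertex of $Y$---can be found within bounded $\cc(F)$-distance of $\pi_F(a)$. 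The second is the genuinely delicate point, comparing the subgraph metric with the ambient curve-graph metric; I expect to extract it from the uniform quasi-isometry of Lemma \ref{lem:arcqi} and the uniform hyperbolicity of $Y$, which pin down $d_Y$ up to the constants needed. Composing $\pi_F$ with this coarse retraction then yields an $(A,B)$-quasi-retract onto $Y$, so $F$ is ideal; by Lemma \ref{lem:qi} the inclusion $Y\hookrightarrow\sep_2(S,\cp)$ is then a quasi-isometry, which is what makes the witness useful downstream.
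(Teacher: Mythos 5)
Your construction of $F$ and your witness argument match the paper's, but the idealness step has a genuine gap, and you have in fact put your finger on it yourself: the quasi-retract conditions must be verified in the intrinsic path metric of the target subgraph, and your proposed target $Y=\sep_2(F,\mathcal Q)$ sits inside $\cc(F)$ in a metrically uncontrolled way when $F$ has positive genus. Your claim that ``$\cc(F)$-close vertices of $Y$ are $d_Y$-close'' is false in general: if $e$ is a nonseparating curve in $F$, any two vertices of $\sep_2(F,\mathcal Q)$ disjoint from $e$ are at $\cc(F)$-distance at most $2$, yet applying a high power of a pseudo-Anosov supported on $F\smallsetminus e$ to one of them keeps it within $\cc(F)$-distance $2$ of the other while pushing it arbitrarily far away in $\sep_2(F,\mathcal Q)$ (the subsurface projection to $\cc(F\smallsetminus e)$ is coarsely Lipschitz on $\sep_2(F,\mathcal Q)$ and so gives a lower bound). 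Neither Lemma \ref{lem:arcqi} nor uniform hyperbolicity of $Y$ can repair this, since the issue is precisely that the inclusion $\sep_2(F,\mathcal Q)\hookrightarrow\cc(F)$ is not a quasi-isometric embedding.

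The paper's fix is a normalization you omit: after choosing the $V_i$, enlarge them further so that $\genus(F)=0$. Once $F$ is planar with exactly $n=|\cp|$ boundary components, every essential non-peripheral curve in $F$ automatically separates the boundary components (hence the $P_i$) into two groups of size at least two, so $\cc(F)$ coincides with $\sep_2(F,\mathcal Q)$ and is an \emph{induced} subgraph of $\sep_2(S,\cp)$ whose induced path metric is exactly $d_{\cc(F)}$. The target of the quasi-retract is then all of $\cc(F)$ with its own metric, and the delicate comparison disappears: the subsurface projection $\pi_F$ is the identity on $\cc(F)$, has images of uniformly bounded diameter, and sends disjoint curves to sets a bounded $\cc(F)$-distance apart by the standard bounded-intersection estimate (\cite[Lemma 2.1]{MasurHyperbolicityI}). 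With that one extra sentence in your construction, the rest of your argument goes through.
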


\begin{proof}
As $\Ends(S)$ is a normal topological space (i.e.\ it is $T_4$), we can find pairwise disjoint open sets $U_i$ containing $P_i$ for $1\leq i \leq n$.
Furthermore, as $\Ends(S)$ is totally disconnected, we may assume $\{U_i\}$ is a cover of $\Ends(S)$.
We can then choose separating (possibly peripheral) simple closed curves $c_i$ such there is a component $V_i$ of $S\smallsetminus c_i$ with $U_i = V_i^*$.  

Now, as $\{U_i\}$ is a finite cover of $\Ends(S)$, we must have that $F = S\smallsetminus \bigcup V_i$ is a compact subsurface.  
By possibly enlarging the $V_i$, we may suppose that $\genus(F) = 0$ (see Figure \ref{fig:infinite} for an example).

We claim that $F$ is an ideal witness for $\sep_2(S,\cp)$.
Let $c\in \sep_2(S,\cp)$ and suppose it is disjoint from $F$.  
It follows there is a component $V$ of $S\smallsetminus c$ containing $F$; hence, $V^*$ contains at least $|\cp|-1$ of the sets  in $\cp$, a contradiction.
We now have that $F$ is a witness; it is left to show that it is ideal.

Observe that $\cc(F)$ is a subgraph of $\sep_2(S,\cp)$.
It follows that the standard subsurface projection $\pi_F \co \sep_2(S,\cp) \to \mathfrak P (\cc(F))$ is a quasi-retract.
In particular, the subsurface projections of two disjoint curves have bounded intersection number and hence their distance is bounded (see \cite[Lemma 2.1]{MasurHyperbolicityI}).
This establishes that $F$ is an ideal witness.  
\end{proof}

\begin{Cor}
$\sep_2(S,P)$ is infinite diameter. Furthermore, there exist infinitely many elements of $\mcg(S)$ acting on $\sep_2(S, \cp)$ with positive translation length.
\end{Cor}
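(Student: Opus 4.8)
The plan is to transfer both properties from the curve graph of the compact ideal witness $F$ produced by Lemma \ref{lem:ideal2}. Recall that $F$ is a compact genus-zero surface with $|\cp| = n \geq 5$ boundary components, that $\cc(F)$ sits inside $\sep_2(S,\cp)$ as a subgraph, and that the subsurface projection $\pi_F \co \sep_2(S,\cp) \to \mathfrak{P}(\cc(F)) \smallsetminus \{\emptyset\}$ exhibits $\cc(F)$ as a quasi-retract of $\sep_2(S,\cp)$. Since $\cc(F)$ is a subgraph and is connected, the induced path metric on $\cc(F)$ agrees with its intrinsic curve-graph metric, so these two viewpoints are interchangeable.

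First I would establish infinite diameter. As $F$ is a sphere with $n \geq 5$ boundary components, its curve graph $\cc(F)$ is connected and has infinite diameter. Because $\cc(F)$ is a quasi-retract of $\sep_2(S,\cp)$, Corollary \ref{cor:infinite-diameter} immediately yields that $\sep_2(S,\cp)$ has infinite diameter. Equivalently, Lemma \ref{lem:qi} says the inclusion $\cc(F) \hookrightarrow \sep_2(S,\cp)$ is a quasi-isometry, and infinite diameter is a quasi-isometry invariant.

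For the translation-length statement, I would realize the required elements of $\mcg(S)$ as pseudo-Anosov maps supported on $F$. Given a pseudo-Anosov homeomorphism $\varphi$ of $F$ fixing $\partial F$, extend it by the identity on $S \smallsetminus F$ to obtain an element of $\mcg(S)$ (in fact of $\pmcg(S)$). Since $\varphi$ is supported on $F$, it preserves the subgraph $\cc(F)$, and the inclusion $\iota \co \cc(F) \hookrightarrow \sep_2(S,\cp)$ is $\varphi$-equivariant. By \cite[Proposition 4.6]{MasurHyperbolicityI}, $\varphi$ acts on $\cc(F)$ with positive stable translation length $\tau > 0$. Since $\iota$ is a $(2A+B,0)$-quasi-isometric embedding by Lemma \ref{lem:qi}, for any vertex $c \in \cc(F)$ we have
\[
d_{\sep_2}(c, \varphi^k c) \geq \frac{1}{2A+B}\, d_{\cc(F)}(c, \varphi^k c),
\]
so dividing by $k$ and letting $k \to \infty$ gives translation length at least $\tau/(2A+B) > 0$ on $\sep_2(S,\cp)$. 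As the distinct powers $\varphi^k$ are themselves pseudo-Anosov (and one may equally take infinitely many non-conjugate pseudo-Anosov classes on $F$), this produces infinitely many elements of $\mcg(S)$ acting with positive translation length.

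The conceptual work is entirely contained in the two preceding lemmas; the only point that demands care is the equivariant transfer of translation length, namely checking that a $\varphi$-equivariant quasi-isometric embedding carries a positive translation length to a positive one. The additive constant being $0$ in Lemma \ref{lem:qi} makes the estimate transparent, and the equivariance of $\iota$ holds precisely because $\varphi$ is supported in the witness $F$; I would make sure to note this support condition explicitly, as it is what allows the extension by the identity and the commutation with $\pi_F$.
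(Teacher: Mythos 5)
Your proposal is correct and follows essentially the same route as the paper: the paper's proof likewise invokes the ideal witness $F$ from Lemma \ref{lem:ideal2}, uses Lemma \ref{lem:qi} to see that $\cc(F)$ is quasi-isometrically embedded in $\sep_2(S,\cp)$, and concludes that $\sep_2(S,\cp)$ has infinite diameter and that every pseudo-Anosov in $\pmcg(F)\subset\pmcg(S)$ acts with positive translation length. You have merely spelled out the standard details (the translation-length estimate under a quasi-isometric embedding and the equivariance of the inclusion) that the paper leaves implicit.
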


\begin{proof}
Let $F$ be the ideal witness constructed in Lemma \ref{lem:ideal2}. Then Lemma \ref{lem:qi} implies $\cc(F)$ is quasi-isometrically embedded in $\sep_2(S,\cp)$.
In particular, $\sep_2(S,\cp)$ is infinite-diameter and every pseudo-Anosov element in $\pmcg(F)\subset \pmcg(S)$ acts on $\sep_2(S,\cp)$ with positive translation length.
\end{proof}

Let us now assume that every set in $\cp$ is a singleton and let $P\subset \Ends(S)$ be the union of these singletons.

\begin{Lem}
$\sep_2(S,P)$ is $\delta$-hyperbolic, where $\delta$ can be chosen independently of $S$ and $P$.
\end{Lem}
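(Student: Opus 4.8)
The plan is to exhibit $\sep_2(S,P)$ as a space carrying a $\delta$-hyperbolic cover in the sense of \S\ref{sec:quasiretracts} and then invoke Lemma \ref{lem:hyperbolic}. The cover will consist of subgraphs of the form $\sep_2(F,Q)$, where $F$ ranges over finite-type subsurfaces of $S$ and $Q$ is a suitable collection of singleton ends; each such subgraph will be uniformly $\delta$-hyperbolic by Theorem \ref{thm:sephyp}, and the real content of the argument is to show that every geodesic triangle is swallowed by one of them.

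First I would record two compatibility facts. Let $F\subset S$ be a finite-type subsurface and let $Q$ be a set of boundary curves of $F$, regarded as punctures of $\mathrm{int}(F)$, in bijection with the $p_i$ via ``$p_i$ lies beyond $c_i$''. Then $\sep_2(F,Q)$ is an \emph{induced} subgraph of $\sep_2(S,P)$: a curve supported in $F$ splits $Q$ into two groups of size $\geq 2$ exactly when it splits $P$ the same way, and disjointness in $F$ is equivalent to disjointness in $S$ for such curves. Consequently the induced path metric on $\sep_2(F,Q)\subset\sep_2(S,P)$ agrees with its intrinsic graph metric, so Theorem \ref{thm:sephyp}—which permits $F$ to carry extra genus and extra punctures lying outside $Q$, requiring only $|Q|\geq 5$—certifies that each such $\sep_2(F,Q)$ is $\delta$-hyperbolic for a single $\delta$ independent of $F$, $Q$, and hence of $S$ and $P$.

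The main step is the covering condition. Given a geodesic triangle $T$ in $\sep_2(S,P)$, its three sides use only finitely many vertices, so they lie in a compact subsurface $F_0\subset S$. I would then enlarge $F_0$ to a finite-type $F\supseteq F_0$ with the following end data: using that $\Ends(S)$ is compact, Hausdorff, and totally disconnected, isolate each $p_i$ inside a clopen neighborhood cut off by a single separating curve $c_i$ whose exterior region contains no other $p_j$, and cover the remaining ends by finitely many further single-curve regions, arranging all of these regions to be pairwise disjoint and disjoint from $F_0$. Deleting the regions leaves a compact, hence finite-type, subsurface $F\supseteq F_0$ whose boundary curves are the $c_i$ together with finitely many ``childless'' curves; setting $Q=\{c_1,\dots,c_n\}$ gives $|Q|=n\geq5$ singletons, the childless curves being merely extra punctures of $\mathrm{int}(F)$, which Theorem \ref{thm:sephyp} tolerates. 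Since every vertex of $T$ is supported in $F_0\subseteq F$, the triangle $T$ is contained in $\sep_2(F,Q)$. Letting $\mathcal{Y}$ be the collection of all such $\sep_2(F,Q)$ then produces a $\delta$-hyperbolic cover, and Lemma \ref{lem:hyperbolic} delivers the conclusion with $\delta$ the uniform constant from Theorem \ref{thm:sephyp}.

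I expect the one delicate point to be exactly this enlargement, where one must simultaneously keep $F$ finite-type, retain all of $F_0$—which may have positive genus, since separating curves can fill positive-genus surfaces, so the genus-zero witness of Lemma \ref{lem:ideal2} will not suffice to contain $T$—and isolate each $p_i$ behind a single boundary curve so that $Q$ consists of genuine singletons. The crucial structural input making this possible is that Theorem \ref{thm:sephyp} imposes no constraint on the genus of $F$ or on punctures lying outside $Q$: this lets the genus of $F_0$ and the ``childless'' ends be absorbed harmlessly, while the five-or-more marked singletons are what drive the uniform hyperbolicity.
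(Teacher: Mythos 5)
Your proof is correct and follows essentially the same route as the paper: cover $\sep_2(S,P)$ by subgraphs $\sep_2(F,Q)$ for finite-type $F\subset S$ whose complementary components each see at most one point of $P$, get uniform $\delta$ from Theorem \ref{thm:sephyp}, and conclude via Lemma \ref{lem:hyperbolic}. You merely spell out the enlargement of the compact support of a triangle to such an $F$ and the induced-subgraph/metric compatibility, both of which the paper asserts without detail.
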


\begin{proof}
Let $\mathcal{F}$ be the family of all finite-type surfaces $F \subset S$ such that the ends of each component of $S\smallsetminus F$ contains at most one point of $P$.
Observe that $\mathcal{F}$ is actually a cover of $S$.
For $F \in \mathcal{F}$, let $W_F$ be the components of $\partial F$ that partition $P$.
Note that $|W_F| = |P|$ and $\sep_2(F,W_F) \subset \sep_2(S,P)$.
Given a triangle $T \subset \sep_2(S,P)$ there exists $F\in \mathcal{F}$ such that $T \subset \sep_2(F,W_F)$.
This fact along with Theorem \ref{thm:sephyp} implies there exists $\delta$ such that $\{\sep_2(F,W_F)\}_{F\in \mathcal F}$ is a $\delta$-hyperbolic cover of $\sep_2(S, P)$.
The result follows by applying Lemma \ref{lem:hyperbolic}.
\end{proof}

\section{Nonseparating curve graphs}\label{sec:nonsep}
The results in this section are not new (see \cite{AramayonaGeometry, RasmussenUniform}); however, for completeness we provide proofs.  
Let $S$ be a finite-genus surface with $\genus(S) > 0$ and let $\nonsep(S)$ be the induced subgraph of $\cc(S)$ on the set of nonseparating curves. If $\genus(S)=1$ we modify the adjacency relation to intersecting once.

\begin{Lem}
\label{lem:nonsep}
If $F\subset S$ is a finite-type surface with $\genus(F) = \genus(S)$, then the inclusion $i\colon\!\nonsep(F)\hookrightarrow \nonsep(S)$ is an isometric embedding.
\end{Lem}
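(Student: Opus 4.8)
The plan is to prove that the inclusion $i\co \nonsep(F) \hookrightarrow \nonsep(S)$ is an isometric embedding by establishing two inequalities. Since $i$ is distance non-increasing (disjoint curves in $F$ remain disjoint in $S$, so $d_S(a,b) \leq d_F(a,b)$ for $a,b \in \nonsep(F)$), the content is the reverse inequality $d_F(a,b) \leq d_S(a,b)$. The natural strategy is to take a geodesic in $\nonsep(S)$ between two nonseparating curves of $F$ and \emph{push it back into $F$} without increasing length, producing a path in $\nonsep(F)$ of the same length.

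First I would set up the retraction. The key topological fact is that $\genus(F) = \genus(S)$, so $S \smallsetminus F$ is a disjoint union of planar pieces: every component of $S \smallsetminus F$ has genus zero. This means that a curve is nonseparating in $S$ precisely because of homology it picks up from the genus, all of which lives in $F$. I would then argue that for any nonseparating curve $c$ in $S$, there is a nonseparating curve $c'$ contained in $F$ that is disjoint from $c$ and ``carries the same essential crossing data'' — concretely, one can surger or homotope $c$ across the planar complementary regions. The cleanest route is to define, for a vertex $c$ in a given $\nonsep(S)$-geodesic, a replacement $r(c) \in \nonsep(F)$, arranged so that consecutive disjoint curves have disjoint (or equal) replacements, so that the map on a geodesic preserves adjacency.

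The main step is the following claim: \emph{if $a, b$ are disjoint nonseparating curves in $S$ and $a \subset F$, then there is a nonseparating curve $b' \subset F$ disjoint from $a$ with $b'$ either equal to or disjoint from $b$.} Given such a claim, I would induct along a geodesic $a = c_0, c_1, \ldots, c_k = b$ in $\nonsep(S)$ (with $a,b \in \nonsep(F)$): starting from $c_0 = a \in \nonsep(F)$, replace each $c_{j+1}$ by a curve $c_{j+1}' \subset F$ disjoint from $c_j'$, using that $c_j'$ is already in $F$. The endpoints $a$ and $b$ need no replacement since they already lie in $F$. This yields a path of length $k$ in $\nonsep(F)$, so $d_F(a,b) \leq k = d_S(a,b)$, completing the argument.

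The hard part will be making the replacement claim precise and verifying it always lands on a \emph{nonseparating} curve of $F$ that is genuinely disjoint from the previous replacement. The obstacle is that a curve $c$ meeting $\partial F$ may, after being pushed into $F$, become separating or peripheral, or may intersect the already-chosen $c_j'$. Controlling this requires using the planarity of $S \smallsetminus F$: since the complementary components carry no genus, any arc of $c$ outside $F$ can be homotoped relative to $\partial F$ into $\partial F$ without altering the homology class of the resulting curve, so nonseparation in $F$ is preserved. I expect the bookkeeping to rely on choosing the $c_j'$ compatibly via subsurface considerations or by a direct homological argument showing the image in $H_1(F)$ remains nonzero; this is where care is needed, and it is essentially the geometric heart of the lemma.
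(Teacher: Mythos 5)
Your overall strategy---retract $\nonsep(S)$ onto $\nonsep(F)$ without increasing distances---is the same as the paper's, but the concrete mechanism you propose has a gap, and it sits exactly at the step you yourself defer as ``the geometric heart of the lemma.'' Your main step takes disjoint nonseparating curves $a,b$ with $a\subset F$ and produces $b'\subset F$ disjoint from $a$. To run the induction along a geodesic $c_0,c_1,\dots,c_k$ you must apply this with $a=c_j'$ and $b=c_{j+1}$, which requires $c_{j+1}$ to be disjoint from the \emph{replacement} $c_j'$; but the geodesic only guarantees that $c_{j+1}$ is disjoint from $c_j$, and two curves disjoint from a common third curve can certainly intersect each other. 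So the induction does not close up. What is actually needed is a replacement $r(c)$ defined for every vertex at once (independently of any chosen geodesic) such that disjoint curves have disjoint or equal images; an ad hoc ``push the arcs of $c$ across the planar complementary regions'' does not obviously have this property, since two disjoint curves can have interleaved arcs in the same complementary component, and an arc separating punctures of that component cannot be homotoped into $\partial F$ at all.

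The paper supplies precisely this canonical map. Since $\genus(F)=\genus(S)$, each component of $S\smallsetminus F$ is a punctured disk: it is planar, and it meets $\partial F$ in a single circle, because attaching a planar piece along two or more boundary circles would increase the genus. Forgetting all but one puncture in each such component yields a surface homeomorphic to the interior of $F$, and the induced map $p_F\co \nonsep(S)\to\nonsep(F)$ is well defined (a nonseparating curve stays nonseparating, hence essential, when punctures are filled in), simplicial (geometric intersection numbers do not increase under the forgetful map, so disjoint curves go to disjoint or equal curves), and satisfies $p_F\circ i=\mathrm{id}$. This gives $d_F(a,b)\leq d_S(a,b)\leq d_F(a,b)$ at once. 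Your homological observations are correct in spirit, but without such a globally defined $1$-Lipschitz retraction the argument as written is incomplete.
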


\begin{proof}
The components of $S\smallsetminus F$ are punctured disks.
By forgetting all but one puncture in each of these components, we obtain a natural simplicial map 
\[
p_F\colon\!\nonsep(S)\rightarrow \nonsep(F).
\]
It is clear that both $p_F$ and $i$ are 1-Lipschitz. 
Moreover, $p_F\circ i$ is the identity, so $i$ is an isometric embedding.
\end{proof}
\begin{Prop}\label{prop:hypiff}
Let $S$ be an infinite-type genus-$g$ surface with $0<g<\infty$, then $\nonsep(S)$ is hyperbolic if and only if all $\nonsep(S_{g,p})$ are $\delta$-hyperbolic with $\delta$ independent of $p$.
\end{Prop}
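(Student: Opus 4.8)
The plan is to present $\nonsep(S)$ together with the cover $\mathcal{Y}=\{\nonsep(F)\}$, where $F$ ranges over finite-type subsurfaces of $S$ with $\genus(F)=g$, and to feed this cover into the hyperbolicity criterion of Lemma \ref{lem:hyperbolic}. The crucial input is Lemma \ref{lem:nonsep}: each inclusion $\nonsep(F)\hookrightarrow\nonsep(S)$ is an isometric embedding, so the induced path metric on $\nonsep(F)$ agrees with its intrinsic graph metric, which is exactly that of $\nonsep(S_{g,p})$ with $p$ the number of ends of $F$ (recall $\nonsep(F)=\nonsep(\mathrm{int}(F))$). This identification is what allows the quantitative hypothesis on the graphs $\nonsep(S_{g,p})$ to control the geometry of $\nonsep(S)$, and the main thing to track throughout is that a \emph{single} constant $\delta$ serves for every member of the cover.

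For the ``if'' direction I would assume all $\nonsep(S_{g,p})$ are $\delta$-hyperbolic with $\delta$ independent of $p$ and verify the three conditions of Lemma \ref{lem:hyperbolic} for $\mathcal{Y}$. Each $\nonsep(F)$ is path-connected (finite-type of genus $g\geq 1$, using the once-intersecting adjacency convention when $g=1$) and, by the preceding paragraph, $\delta$-hyperbolic in its induced path metric with the uniform constant coming from the hypothesis. For the triangle-containment condition, a geodesic triangle $T\subset\nonsep(S)$ has three sides that are finite edge-paths, hence involves only finitely many curves; these lie in a finite-type subsurface, which I enlarge to a finite-type $F$ with $\genus(F)=g$ (possible since $g<\infty$, so a finite-type core carries all of the genus). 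Then $T\subset\nonsep(F)\in\mathcal{Y}$, and Lemma \ref{lem:hyperbolic} yields that $\nonsep(S)$ is $\delta$-hyperbolic. (Alternatively, once the retraction $p_F$ from the proof of Lemma \ref{lem:nonsep} is in hand, the family $\mathcal{Y}$ is a cover by uniform quasi-retracts and Lemma \ref{lem:uniform} delivers both directions at once; I prefer the hand argument for transparency.)

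For the ``only if'' direction I would assume $\nonsep(S)$ is $\delta_0$-hyperbolic. Since $S$ is infinite-type of finite genus, $\Ends(S)$ is infinite, so $S$ contains finite-type subsurfaces $F$ of genus $g$ with arbitrarily many ends; each produces, via Lemma \ref{lem:nonsep}, an isometrically embedded copy of $\nonsep(S_{g,p})$ inside $\nonsep(S)$. A geodesic triangle of $\nonsep(S_{g,p})$ is then a geodesic triangle of $\nonsep(S)$, hence $\delta_0$-slim, and since the embedding is isometric the slimming witnesses lie in the subgraph itself, so $\nonsep(S_{g,p})$ is $\delta_0$-hyperbolic. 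This handles all sufficiently large $p$ directly. For the remaining small values I would invoke the monotone isometric embeddings $\nonsep(S_{g,p})\hookrightarrow\nonsep(S_{g,p'})$ for $p\leq p'$, themselves instances of Lemma \ref{lem:nonsep} (take a genus-$g$, $p$-ended subsurface of $S_{g,p'}$ whose complementary punctured disks absorb the extra $p'-p$ punctures); an isometrically embedded subspace of a $\delta_0$-hyperbolic space is $\delta_0$-hyperbolic, so every $\nonsep(S_{g,p})$ is $\delta_0$-hyperbolic with the common constant $\delta_0$.

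I do not expect a serious obstacle here, since the substantive work is already externalized into Lemma \ref{lem:nonsep} (the isometric embeddings) and Lemma \ref{lem:hyperbolic} (the cover criterion). The only points requiring genuine care are bookkeeping of constants---insisting on one $\delta$ across the whole cover so as to match the ``independent of $p$'' clause---and confirming that \emph{every} $p$ is realized, which the monotonicity embeddings guarantee. The triangle-containment step is the one place where I would double-check that finiteness of the geodesic sides, together with $g<\infty$, genuinely forces the triangle into a single genus-$g$ finite-type piece.
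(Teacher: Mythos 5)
Your proof is correct and follows essentially the same route as the paper: both use the cover of $\nonsep(S)$ by the subgraphs $\nonsep(F)$ for finite-type $F\subset S$ with $\genus(F)=g$, the triangle-containment observation, and Lemma \ref{lem:nonsep}; the paper simply packages the two directions at once through Lemma \ref{lem:uniform} via the $(0,1)$-quasi-retracts $p_F$, which is exactly the alternative you name in your parenthetical. Your hand-unpacked version is fine, and using the isometric (rather than merely quasi-isometric) embeddings for the ``only if'' direction even preserves the constant $\delta$ exactly.
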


\begin{proof}
Recall that from the viewpoint of $\nonsep(S_{g,p})$ punctures and boundary components are interchangeable.
Let $\mathcal{Y}$ denote the collection of genus-$g$ finite-type subsurfaces of $S$.
Note that $\mathcal{Y}$ covers $\nonsep(S)$ and there is an element $\nonsep(F) \in \mathcal{Y}$ such that $F$ has $p$ boundary components for every $p\in \mathbb{Z}_+$. 

For each $\nonsep(F)\in \mathcal{Y}$, the map $p_F$ from Lemma \ref{lem:nonsep} is a (0,1)-quasi-retract.
Further, the finite collection of vertices of a geodesic triangle in $\nonsep(S)$ fill a finite-type surface, which can be enlarged to sit in $\mathcal{Y}$.
This shows that $\mathcal{Y}$ satisfies the conditions of Lemma \ref{lem:uniform} and the result follows.
\end{proof}

\begin{Cor}
If $\genus(S)$ is positive and finite, $\nonsep(S)$ is connected, infinite-diameter and there are infinitely many elements of $\mcg(S)$ acting on $\nonsep(S)$ with positive translation length. If furthermore $\genus(S)\geq 2$, $\nonsep(S)$ is $\delta$-hyperbolic, for $\delta$ independent of $S$.
\end{Cor}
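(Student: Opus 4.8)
The plan is to assemble the Corollary from the results already established in this section, together with the finite-type facts of Masur--Minsky \cite{MasurHyperbolicityI} and Rasmussen's uniform hyperbolicity \cite{RasmussenUniform}. Throughout, write $g=\genus(S)$, and recall that when $S$ is itself finite-type the statement is classical, so the real content is the reduction from the infinite-type case to the finite-type one.

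First I would handle connectedness, infinite diameter, and positive translation length together. Fix a finite-type subsurface $F\subset S$ with $\genus(F)=g$; such an $F$ exists because $g<\infty$, so all of the genus is carried by a compact region, and we may take $F$ as large as we like. By Lemma \ref{lem:nonsep} the inclusion $i\co\nonsep(F)\hookrightarrow\nonsep(S)$ is an isometric embedding, and it is equivariant for the inclusion $\mcg(F)\hookrightarrow\mcg(S)$ given by extending mapping classes supported on $F$ by the identity. Connectedness follows because any two nonseparating curves of $S$ are compact, hence lie in some such $F$ (enlarge to capture all the genus), where $\nonsep(F)$ is connected. Infinite diameter and positive translation length are then inherited through the isometric embedding: $\nonsep(F)$ has infinite diameter, and for $\phi\in\mcg(F)$ pseudo-Anosov the relation $d_{\nonsep(S)}(\phi^n i(c),i(c))=d_{\nonsep(F)}(\phi^n c,c)$ shows the extension of $\phi$ acts with the same positive translation length on $\nonsep(S)$. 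Ranging over infinitely many distinct pseudo-Anosov classes on $F$ supplies infinitely many such elements of $\mcg(S)$.

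For hyperbolicity when $g\ge 2$, I would invoke Proposition \ref{prop:hypiff} together with Rasmussen's theorem. Rasmussen \cite{RasmussenUniform} shows that $\nonsep(S_{g,p})$ is $\delta_0$-hyperbolic with $\delta_0$ independent of $g$ and $p$, provided $g\ge 2$; for finite-type $S$ this is the desired conclusion directly. For infinite-type $S$, Proposition \ref{prop:hypiff} asserts that $\nonsep(S)$ is hyperbolic exactly when the graphs $\nonsep(S_{g,p})$ are uniformly hyperbolic, and its proof---through the uniform $(0,1)$-quasi-retracts $p_F$ and Lemma \ref{lem:uniform}---produces a hyperbolicity constant for $\nonsep(S)$ equal to the uniform constant of the cover. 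Feeding in Rasmussen's $\delta_0$ therefore yields $\delta$-hyperbolicity of $\nonsep(S)$ with $\delta=\delta_0$ independent of $S$.

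The step requiring the most care is the $\delta$-independence in the infinite-type hyperbolic case: one must confirm that the cover $\mathcal{Y}$ by genus-$g$ finite-type subsurfaces genuinely meets the hypotheses of Lemma \ref{lem:uniform} \emph{uniformly}---each $p_F$ is a $(0,1)$-quasi-retract and every geodesic triangle, having only finitely many vertices which fill a finite-type surface, sits inside some enlarged member of $\mathcal{Y}$---and this is precisely what the proof of Proposition \ref{prop:hypiff} provides. The remaining points are bookkeeping: checking that the finite-type base cases for connectedness, infinite diameter, and positive translation length hold across the relevant genus range, and using the equivariance of $i$ correctly to transport translation lengths from $\nonsep(F)$ to $\nonsep(S)$.
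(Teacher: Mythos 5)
Your proposal is correct and follows essentially the same route as the paper: reduce to a finite-type subsurface $F$ carrying all the genus, use the isometric embedding of Lemma \ref{lem:nonsep} (plus the fact that any two nonseparating curves fill a finite-type surface) to get connectedness, infinite diameter, and positive translation length from pseudo-Anosovs supported on $F$, and then combine Proposition \ref{prop:hypiff} with Rasmussen's uniform hyperbolicity for the $\genus(S)\geq 2$ case. No substantive differences from the paper's argument.
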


\begin{proof}
If $S$ has finite-type, then $\nonsep(S)$ is connected (see \cite[Theorem 4.4]{FarbPrimer}) and infinite-diameter (as is every mapping class group invariant subgraph of $\cc(S)$).
Let $F\subset S$ be finite-type with $\genus(F) = \genus(S)$, then $\nonsep(F)$ is isometrically embedded in $\nonsep(S)$ by Lemma \ref{lem:nonsep}.
It follows that $\nonsep(S)$ is infinite-diameter and that any pseudo-Anosov supported in $F$ acts with positive translation length.
Further, as any two elements of $\nonsep(S)$ fill a finite-type surface, the connectedness of $\nonsep(S)$ follows from the finite-type case.
If $\genus(S)\geq 2$, using Proposition \ref{prop:hypiff} and Rasmussen's work \cite{RasmussenUniform} we get uniform hyperbolicity.
\end{proof}

\section{Low-index cases}\label{sec:oddities}
As mentioned in the introduction, having finite-invariance between two and three does not determine whether $S$ admits a connected graph consisting of curves on which the mapping class group acts with infinite-diameter orbits. In this section we present examples of surfaces that do admit such graphs and of surfaces that do not. In the case of $\fii=1$, we conjecture:

\begin{Conj}
If $S$ is infinite type and $\fii(S)=1$, then there does not exist a graph consisting of curves on $S$ with an infinite diameter orbit of $\mcg(S)$.
\end{Conj}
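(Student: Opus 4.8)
The plan is to adapt the strategy behind Theorem \ref{thm:zero}, applying the finite-diameter criterion of Proposition \ref{prop:criterionfd} to an arbitrary connected graph $\Gamma = \Gamma(S)$ of curves carrying a $\mcg(S)$-action. The first step is to understand what $\fii(S) = 1$ forces on the end space. By hypothesis there is a closed proper subset $P \subsetneq \Ends(S)$ that is setwise fixed by all of $\mcg(S)$ (a singleton invariant collection must be setwise fixed, since it is its own only possible image), but no pair of disjoint closed proper subsets forms an invariant collection; in particular any two invariant closed proper subsets must intersect. Using the appendix's description of $\Ends(S)$, I would first extract a canonical such $P$---for instance the set of topologically maximal ends under the self-similarity preorder---and then argue that the failure of $\fii(S) \geq 2$ forces the structure of $S$ away from $P$ to be homogeneous: any two ends outside $P$, and any two genus-carrying regions not forced into $P$, should be interchangeable by a mapping class, since otherwise one could manufacture a second disjoint invariant set.

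Second, with $P$ in hand, I would stratify the vertices of $\Gamma$ by the $\mcg(S)$-orbit of the curve, which is determined by the genus of each complementary component together with how the curve partitions $\Ends(S)$ relative to $P$. Fixing a vertex $c$, I would define $\cv$ to consist of the pairs $(a,b)$ of curves in $\mcg(S)\cdot c$ whose union $a \cup b$ cuts $S$ into pieces of a prescribed ``standard'' topological type---each piece on the side away from $P$ containing a homeomorphic copy of the ambient self-similar structure. The homogeneity established in the first step should leave only finitely many $\mcg(S)$-orbits of such standard pairs, which is condition (1) of Proposition \ref{prop:criterionfd}.

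The heart of the argument is condition (2): given $a, b \in \mcg(S)\cdot c$ with $(a,b) \notin \cv$, one must produce a common curve $d \in \mcg(S)\cdot c$ with $(a,d),(b,d) \in \cv$. Here the idea is exactly the intuition behind Theorem \ref{thm:zero}---one can push a curve arbitrarily far out into an end. Some complementary component of $a \cup b$ should contain a self-similar copy of the complement of $P$, into which one pushes a curve $d$ of the correct type disjoint from both $a$ and $b$; the constraint that $P$ cannot be split off should guarantee that such a self-similar component always exists, no matter how $a$ and $b$ are nested relative to $P$.

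I expect the main obstacle to be precisely this last existence claim, and this is what keeps the statement a conjecture rather than a theorem. The hypothesis $\fii(S) \not\geq 2$ is a statement about invariant \emph{collections} of end-subsets and is a priori much weaker than the genuine self-similarity one wants to use; translating it into a usable homogeneity of curve orbits---ruling out pathological end spaces in which curves nested ever more deeply around $P$ fall into infinitely many $\mcg(S)$-orbits, or in which no common disjoint neighbor of the required type exists---appears to require a finer classification of $\fii(S)=1$ surfaces than is presently available. A complete proof would most plausibly proceed by first establishing such a classification and then verifying conditions (1) and (2) of Proposition \ref{prop:criterionfd} case by case.
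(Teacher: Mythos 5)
The statement you are trying to prove is stated in the paper only as a Conjecture: the authors offer no proof of it, and the only $\fii=1$ case they settle is the spotted Loch Ness monster surface (Proposition \ref{prop:low}), where the argument is essentially trivial because every curve on that surface is either nonseparating or cuts off a finite-type subsurface, so Proposition \ref{prop:conditions} applies directly. Your proposal is a strategy in the correct spirit---reduce to Proposition \ref{prop:criterionfd} with a cleverly chosen $\cv$, as the paper does for the tripod surface and the blooming Cantor tree---but it is not a proof, and you say as much yourself. Since the task is to compare against the paper's proof and there is none, the honest verdict is that your sketch leaves the conjecture exactly as open as the paper does.

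The concrete gap is in your first step, before you ever reach conditions (1) and (2). You claim that $\fii(S)=1$ forces the ends outside the invariant set $P$ to be homogeneous, ``since otherwise one could manufacture a second disjoint invariant set.'' This inference fails: two ends lying in distinct $\mcg(S)$-orbits give rise to two invariant closed sets (their orbit closures), but nothing forces those closures to be disjoint from $P$ or from each other. For example, take $P$ to be a single end accumulated by genus and let the remaining ends consist of a sequence of isolated punctures converging to $P$ together with a sequence of ends of a second type also converging to $P$; every invariant closed proper subset one can build from the non-$P$ ends has $P$ in its closure, so $\fii(S)$ can still equal $1$ while the complement of $P$ is badly inhomogeneous. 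Without homogeneity, your set $\cv$ of ``standard pairs'' is not well defined, condition (1) of Proposition \ref{prop:criterionfd} (finitely many orbits of pairs in $\cv$) is unsupported, and the existence of the common neighbor $d$ in condition (2) is exactly the unproved existence claim you flag at the end. A genuine proof would need either a classification of $\fii=1$ end spaces strong enough to run your argument case by case, or a different mechanism entirely for bounding orbit diameters; neither is supplied here or in the paper.
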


We call the surface with no punctures and exactly three ends accumulated by genus the \emph{tripod surface}. Since any permutation of the three ends can be realized by a mapping class, the only collection of subsets of the space of ends which is $\mcg(S)$-invariant (as in Definition \ref{def:fii}) is the set of the three singletons. As these are closed and proper subsets, we get that the tripod surface has $\fii=3$.

The \emph{spotted Loch Ness monster surface} will be the surface with exactly one end accumulated by genus and a sequence of isolated punctures converging to the nonplanar end.
Its space of ends is a sequence of isolated points (the punctures) converging to the unique end accumulated by genus. So the only closed  proper subset of the space of ends is the singleton formed by the end accumulated by genus, which is fixed by the mapping class group. This implies that the spotted Loch Ness monster has $\fii=1$.

\begin{figure}[h]
\includegraphics{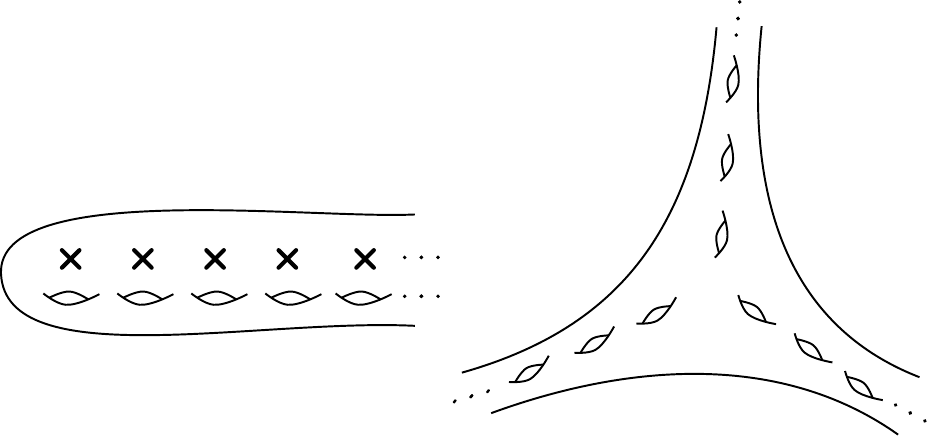}
\caption{The spotted Loch Ness monster surface (left) and the tripod surface (right).}
\end{figure}

\begin{Prop}
\label{prop:low}
Suppose $S$ is the spotted Loch Ness monster surface ($\fii=1$), the plane minus a Cantor set ($\fii=2$), or the tripod surface ($\fii=3$). Then any connected graph $\Gamma=\Gamma(S)$ consisting of curves with an action of $\mcg(S)$ has finite-diameter orbits.
\end{Prop}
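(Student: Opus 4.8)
The plan is to handle each of the three surfaces by exhibiting, for each one, a suitable relation $\cv \subset \Gamma \times \Gamma$ and a well-chosen base vertex $c$ so that the two conditions of Proposition \ref{prop:criterionfd} are verified; the conclusion that every orbit has finite diameter then follows immediately. The unifying idea is that in each of these surfaces the end space is so homogeneous (under the mapping class group) that there are only finitely many orbits of ``nicely positioned'' pairs of curves, and any ``badly positioned'' pair can be bridged by a third curve pushed into a complementary region that still contains enough topology to host a copy of the base vertex.

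\textbf{Case $\fii=2$ (plane minus a Cantor set).} Here $\Ends(S)$ consists of one isolated planar end (the point at infinity) together with a Cantor set, and $\mcg(S)$ fixes the isolated end while acting with dense orbits on the Cantor set. The plan is to set $\cv = \{(a,b) : i(a,b)=0\}$. First I would analyze the orbits of a single curve: a curve either separates the isolated end from part of the Cantor set, or splits only the Cantor set; in either case the relevant invariant is how it partitions $\Ends(S)$, and one checks there are finitely many orbits of disjoint pairs drawn from a single orbit $\mcg(S)\cdot c$ (the complementary regions each carry a Cantor set or a Cantor set plus the puncture). For condition (2), given two intersecting curves $a,b \in \mcg(S)\cdot c$, every complementary region of $a\cup b$ contains a full Cantor set (the Cantor set is perfect and the curves are compact), so there is room to place a curve $d$ in the same orbit disjoint from both.

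\textbf{Cases $\fii=1$ (spotted Loch Ness monster) and $\fii=3$ (tripod).} For the spotted Loch Ness monster, $\genus(S)=\infty$ and the surface has infinitely many isolated punctures accumulating on the unique genus-accumulated end, so I expect this to fall directly under Proposition \ref{prop:conditions}: any vertex of $\Gamma$ is either nonseparating (case (3)), or bounds a finite-type surface (case (2), since there are infinitely many isolated punctures and infinite genus), so the hypotheses are met and orbits are finite-diameter. For the tripod surface, $\genus(S)=\infty$ as well (all three ends are accumulated by genus), and again every curve is either nonseparating or bounds a finite-type subsurface on at least one side, so Proposition \ref{prop:conditions}(2) or (3) applies verbatim.

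\textbf{The main obstacle} I anticipate is the bookkeeping in the $\fii=2$ case: one must carefully enumerate the $\mcg(S)$-orbits of curves and of disjoint pairs on the Cantor-set-punctured plane, keeping track of which side carries the distinguished isolated puncture, to be sure condition (1) of Proposition \ref{prop:criterionfd} genuinely yields only finitely many orbits. The verification that every complementary region retains a Cantor set (needed for condition (2)) is routine once phrased correctly, but the orbit count requires the full homeomorphism classification of Richards (\cite{RichardsClassification}) applied to bordered surfaces. The $\fii=1$ and $\fii=3$ cases, by contrast, should reduce cleanly to Proposition \ref{prop:conditions} and need only the observation that infinite genus forces every separating curve to bound a finite-type piece on one side.
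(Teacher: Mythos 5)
Your treatments of the spotted Loch Ness monster and of the plane minus a Cantor set essentially follow the paper's argument. (For the latter, note one overstatement: it is not true that \emph{every} complementary region of $a\cup b$ contains a Cantor set of ends---a bigon, for instance, contains no ends at all, and a region can carry only the isolated end at infinity---but since there are finitely many complementary regions and their end sets are clopen and cover $\Ends(S)$, \emph{some} region does contain a Cantor set, which is all you need to place $d$; this is how the paper phrases it.) The genuine gap is in the tripod case. You assert that on the tripod surface ``every curve is either nonseparating or bounds a finite-type subsurface on at least one side,'' and this is false: a separating curve that splits off one of the three ends from the other two has infinite genus on \emph{both} sides, since each side contains at least one end accumulated by genus and hence contains arbitrarily deep neighborhoods of that end, all of infinite genus. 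Such curves fall under no case of Proposition \ref{prop:conditions}, so your reduction does not cover them, and $\Gamma$ could consist entirely of curves of this type.

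To close the gap one must apply Proposition \ref{prop:criterionfd} directly to this orbit, which is what the paper does: take $\cv=\{(a,b)\,:\,i(a,b)=0 \mbox{ and } S\setminus (a\cup b) \mbox{ has no finite-type component}\}$. For condition (1), a disjoint pair in $\cv$ drawn from this orbit cuts $S$ into three pieces, none finite type, so each must contain exactly one of the three ends accumulated by genus; by the classification of surfaces this configuration is unique up to homeomorphism, giving a single $\mcg(S)$-orbit of such pairs. For condition (2), a pair $(a,b)$ not in $\cv$ is contained in a finite-type subsurface, outside of which one can choose a curve $d$ in the same orbit (cutting off an end distinct from those cut off by $a$ and by $b$) with $(a,d),(b,d)\in\cv$. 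Without an argument of this kind the tripod case remains unproved.
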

To see that the plane minus a Cantor set has $\fii=2$, note that the Cantor set of punctures (denoted by $C$) and the isolated puncture (denoted by $\infty$) are closed in the space of ends. Furthermore, any two punctures in $C$ can be permuted by a mapping class, so no proper subset of it can be part of a $\mcg(S)$-invariant set. The isolated puncture instead is fixed by the mapping class group, so $\{C, \{\infty\}\}$ is $\mcg(S)$-invariant.
\begin{Rem}
The plane minus a Cantor set is the surface considered by Bavard in \cite{BavardHyperbolic}. In particular, it admits a connected induced subgraph of the arc graph on which the mapping class group acts with unbounded orbits.
This shows that graphs consisting of arcs might be better suited for some low-index cases.
\end{Rem}

\begin{proof}[Proof of Proposition \ref{prop:low}]
We rely again on Propositions \ref{prop:criterionfd} and \ref{prop:conditions}. Indeed, if $\Gamma$ contains either a nonseparating curve or a curve cutting off a finite-type surface, the conclusion follows from Proposition \ref{prop:conditions}. This covers all cases if $S$ is the spotted Loch Ness monster. If $S$ is the tripod surface, $\Gamma$ could contain a curve $c$ which is separating, but does not cut off a finite-type subsurface. In this case, we apply Proposition \ref{prop:criterionfd} with $$\cv=\{(a,b)\,|\,i(a,b)=0 \mbox{ and } S\setminus (a\cup b) \mbox{ has no finite-type component}\}.$$
Since $|\Ends(S)|=3$, there is a unique mapping class group orbit of pairs in $\cv$, and if a pair $a, b\in\mcg(S)\cdot c$ is not in $\cv$, there is a curve $d\in \mcg(S)\cdot c$ such that $(a,d)$ and $(b,d)$ belong to $\cv$ (because $a$ and $b$ fill some finite-type subsurface, so we can choose some curve $d$ far away that does not bound a finite-type subsurface with $a\cup b$).

If $S$ is the plane minus a Cantor set, we apply Proposition \ref{prop:criterionfd} with $$\cv=\{(a,b)\,|\, i(a,b)=0\}.$$ Note that there is a unique mapping class group orbit of curves on $S$, as the complement of each curve has two components, one being
homeomorphic to the open disk minus a Cantor set and the other the open annulus minus a Cantor set. Furthermore, there are two mapping class group orbits of pairs in $\cv$: for $(a,b)\in\cv$, either $S\setminus a\cup b$ is two disks minus a Cantor set and a once-punctured annulus or it is two disks minus a Cantor set and a once-punctured annulus minus a Cantor set. Since for any two intersecting curves $a,b$ there is a component of $S\setminus a\cup b$ containing a Cantor set, there is a curve $d$ disjoint from both $a$ and $b$.
\end{proof}

On the other hand, the surface with no punctures and exactly two ends accumulated by genus (\emph{Jacob's ladder surface}) has $\fii=2$ and we can show that it admits a graph consisting of curves with a good action of the mapping class group. More precisely, $\cg(S)$ will be the graph whose vertices correspond to curves separating the two non-planar ends, and where two curves are adjacent if they are disjoint and they cobound a genus one subsurface.

\begin{Lem}\label{lem:Gconnected}
$\cg(S)$ is connected and has infinite-diameter orbits.
\end{Lem}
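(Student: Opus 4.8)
The plan is to treat connectedness and infinite diameter as two essentially independent arguments, organized around the obvious ``standard'' separating curves. Write $e_-,e_+$ for the two ends accumulated by genus, and fix a bi-infinite chain $\{c_i\}_{i\in\bz}$ of pairwise disjoint curves, each separating $e_-$ from $e_+$, chosen so that $c_{i-1}$ and $c_i$ cobound a genus-one subsurface $M_i$ (i.e.\ realize $S$ as a line of genus-one blocks). Then each $c_i$ lies in $\cg(S)$, consecutive ones are adjacent, and the evident shift $\vp\in\mcg(S)$ with $\vp(c_i)=c_{i+1}$ exhibits all $c_i$ in a single $\mcg(S)$-orbit.

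For connectedness I would first handle \emph{disjoint} vertices. If $a,b\in\cg(S)$ are disjoint and distinct, they are nested and cobound a compact subsurface $M$ with $\partial M=a\sqcup b$ and no ends, so $\genus(M)=:g\geq 1$ (genus zero would force $a$ isotopic to $b$). Cutting $M$ into $g$ genus-one pieces by $g-1$ separating curves produces a chain $a=d_0,d_1,\dots,d_g=b$ in which each $d_j$ separates $e_-$ from $e_+$ (it lies between $a$ and $b$) and consecutive $d_j$ cobound genus one; hence $d_{\cg}(a,b)\leq g$. The second, and main, ingredient is an \emph{engulfing lemma}: any two vertices $a,b$ admit a common vertex $d$ disjoint from both. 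Since $a\cup b$ is compact it fills a finite-type $F\subset S$; because $S$ has only the two ends, I can choose a connected neighborhood $U$ of $e_+$ with $\overline U\cap(F\cup\{e_-\})=\emptyset$ and then find a \emph{single} separating curve $d\subset U$ cutting off a one-ended infinite-genus piece containing $e_+$ from the rest (which contains $\partial U$, $F$, and $e_-$). Such a $d$ separates $e_-$ from $e_+$, is disjoint from $F\supset a\cup b$, and is therefore a vertex of $\cg(S)$ disjoint from $a$ and $b$. Combining with the disjoint case gives $d_{\cg}(a,b)\leq d_{\cg}(a,d)+d_{\cg}(d,b)<\infty$, so $\cg(S)$ is connected. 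The delicate point—this is the step I expect to be the main obstacle—is producing the \emph{single} curve $d$ when the frontier of an end-neighborhood may a priori have several components; the argument needs the total disconnectedness/compactness of $\Ends(S)$ and the fact that a one-ended infinite-genus surface always admits a separating curve beyond any compact set.

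For infinite diameter I would build an explicit $1$-Lipschitz potential. For a vertex $v$, let $R^-_v$ denote the closure of the component of $S\smallsetminus v$ containing $e_-$, and for a compact subsurface $K\supset v\cup c_0$ set
\[
f(v)=\genus(R^-_v\cap K)-\genus(R^-_{c_0}\cap K).
\]
Outside a large enough $K$ the surfaces $R^-_v$ and $R^-_{c_0}$ coincide (both are the $e_-$-neighborhood beyond $K$), so adding genus to both sides leaves the difference unchanged; thus $f(v)\in\bz$ is well defined, independent of $K$, and $f(c_0)=0$. If $u,w\in\cg(S)$ are adjacent I may take $K\supset u\cup w$, the $c_0$-terms cancel, and since $u,w$ are disjoint and cobound a single genus-one block, $R^-_w$ is obtained from $R^-_u$ (or vice versa) by gluing that block along one boundary circle; additivity of genus under such a gluing gives $|f(u)-f(w)|=1$. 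Hence $f$ is $1$-Lipschitz on $\cg(S)$, while $f(c_n)=n$ by the same computation along the chain, so $d_{\cg}(c_0,c_n)\geq |f(c_n)-f(c_0)|=n$. As all $c_n$ lie in $\mcg(S)\cdot c_0$ (via $\vp$), the orbit of $c_0$ has infinite diameter, and in fact $\vp$ has positive translation length. The only thing to verify carefully here is the stabilization making $f$ well defined and isotopy-invariant, which is immediate once one fixes an exhaustion of $S$ by compact subsurfaces.

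In summary, connectedness reduces to the genus-chain argument for disjoint curves plus the engulfing lemma, and infinite diameter follows from the relative-genus potential $f$; I expect the genuine work to be the clean construction of the engulfing curve, with everything else being routine verification.
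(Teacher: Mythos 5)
Your proof is correct and follows essentially the same route as the paper: the same genus-chain argument for disjoint vertices plus an engulfing curve beyond the subsurface filled by $a\cup b$ for connectedness, and a genus-counting argument for the diameter lower bound. The only cosmetic difference is that you package the lower bound as a globally defined $1$-Lipschitz relative-genus potential $f$, whereas the paper measures the genus cobounded with an auxiliary curve $d$ chosen disjoint from a given path; these track the same quantity.
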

\begin{proof}Consider $a,b\in\cg(S)$. If they are disjoint, then $S\setminus(a\cup b)$ has three components.
The component whose boundary consists of both  $a$ and $b$ is finite type, so it has some finite genus $k$.
It is then easy to construct a path between $a$ and $b$ of length $k$.

If instead they intersect, then we can choose a curve $c\in\cg(S)$ which is contained in the complement of the finite-type surface filled by $a$ and $b$. As this is disjoint from both $a$ and $b$, it is connected to both, so $a$ and $b$ are connected too.

To show that it has infinite-diameter orbits, note first that if two curves bound a genus $k$ subsurface, then they are at distance at most $k$. We want to show their distance is exactly $k$. Consider a path $a=c_0,c_1,\dots,c_m=b$. The union of the $c_i$ is contained in a finite-type subsurface of $S$, so there exist $d\in\cg(S)$ such that all $c_i$ are in the same connected component of $S\setminus d$. Let $g_i$ be the genus of the surface bounded by $c_i$ and $d$. At every step, $g_i$ can change by at most one and $|g_0-g_m|=k$. It follows that $m\geq k$. 

\begin{figure}[t]
\includegraphics{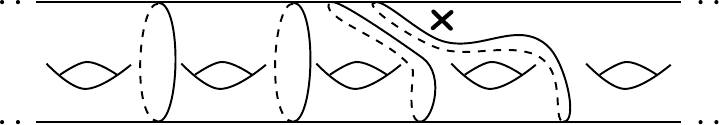}
\caption{The unbounded orbit of $\cg'(S)$ on the once-punctured Jacob's ladder surface $S$}\label{fig:1Jacob}
\end{figure}

The Jacob's ladder surface is a cyclic cover of the genus-2 surface.
Let $\tau$ denote deck transformation generating the deck group. 
Note that $\tau$ can be viewed as a translation (similar to Figure \ref{fig:not-hyperbolic}).
This shows that $\tau$ acts on $\cg(S)$ with translation length 1.
\end{proof}

\begin{Rem}
Let $S$ be obtained by puncturing the Jacob's ladder surface in a single point, then $\fii=3$.
If we define $\cg'(S)$ in the analogous way -- vertices correspond to separating curves separating the nonplanar ends and adjacency denotes cobounding a genus-one subsurface (either punctured or not) -- then Lemma \ref{lem:Gconnected} holds for $\cg'(S)$ with the same proof.
The infinite-diameter orbit is shown in Figure \ref{fig:1Jacob}.
\end{Rem}

\begin{Prop}
The graph $\cg(S)$ is not Gromov hyperbolic.
\end{Prop}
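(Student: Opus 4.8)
The plan is to show that $\cg(S)$ contains a quasi-isometrically embedded copy of $\mathbb{Z}^2$ (or at least a ``fat'' flat that violates thinness of triangles), which is incompatible with Gromov hyperbolicity. The key observation from the proof of Lemma~\ref{lem:Gconnected} is that the distance in $\cg(S)$ between two disjoint vertices cobounding a subsurface of genus $k$ is \emph{exactly} $k$. This gives us very precise metric control: the combinatorics of $\cg(S)$ is governed by a ``genus coordinate'' along the ladder, and moving a curve past one unit of genus costs exactly one edge.

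First I would set up coordinates adapted to the cyclic structure. Recall $S$ is a cyclic cover of the genus-$2$ surface with deck translation $\tau$ acting with translation length $1$. I would fix a bi-infinite sequence of disjoint vertices $\{c_i\}_{i\in\mathbb{Z}}$ with $c_i = \tau^i(c_0)$, where consecutive $c_i$ cobound a genus-one piece, so that $d(c_i,c_j) = |i-j|$ and the $c_i$ form a bi-infinite geodesic. The idea is then to thicken this geodesic: for each $i$, there should be a whole family of vertices at bounded distance from $c_i$ obtained by modifying the curve within a fixed finite-genus window, giving a second, transverse direction of growth. Concretely, I expect to produce two bi-infinite geodesics (or two rays) that pairwise separate linearly but stay at bounded distance near their common basepoint, so that the geodesic triangle they span fails to be $\delta$-slim for any fixed $\delta$.

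The cleanest route is probably to exhibit, for each $n$, a geodesic quadrilateral or a pair of geodesics of length $\sim n$ that are far apart in the middle while sharing endpoints, directly contradicting the thin-triangles condition. Using the exact distance formula, I would take two vertices $a,b$ cobounding a genus-$2n$ surface $\Sigma$, so $d(a,b)=2n$, and then build two distinct geodesics from $a$ to $b$: one ``pushing'' the intermediate curves toward one non-planar end and one toward the other, where the transverse freedom comes from the fact that inside $\Sigma$ (a finite-type surface) there are many topologically distinct ways to fill in the genus by stages. Each such geodesic is forced to realize the genus count at every step, but the midpoint curves of the two geodesics can be arranged to cobound a subsurface of genus $\sim n$ with one another, hence be at distance $\sim n$ apart. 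This shows the midpoint of one geodesic lies at distance $\sim n$ from the other, so triangles are not uniformly thin.

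The main obstacle will be rigorously producing the transverse divergence: it is easy to believe that the genus coordinate forces geodesics to be long, but I must check that two geodesics between the same endpoints can genuinely be driven \emph{apart} in $\cg(S)$, rather than merely passing through different but nearby vertices. The delicate point is bounding \emph{below} the distance between the two midpoint curves, which again reduces to the exact genus-distance formula from Lemma~\ref{lem:Gconnected}: I would need to verify that the two midpoint curves are disjoint (or have controlled intersection) and cobound a subsurface whose genus grows linearly in $n$, so that their $\cg(S)$-distance is $\sim n$. Once this lower bound is established, the failure of $\delta$-hyperbolicity is immediate, since no fixed $\delta$ can absorb an unbounded gap. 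I would conclude by invoking the standard fact that a Gromov hyperbolic geodesic space has uniformly thin triangles, so the existence of arbitrarily fat triangles rules out hyperbolicity.
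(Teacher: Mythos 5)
Your overall strategy---produce, for each $n$, two geodesics in $\cg(S)$ with the same endpoints whose midpoints are distance $\sim n$ apart---is the same shape as the paper's argument, but the step you yourself flag as delicate is where the proposal breaks down, and the tool you propose for it cannot be repaired. You want to lower-bound $d_{\cg(S)}(c,c')$ for the two midpoints by arranging that $c$ and $c'$ are disjoint and cobound a subsurface of genus $\sim n$. This is impossible: if $a$ and $b$ cobound a genus-$2n$ surface $\Sigma$ and $c,c'$ are disjoint vertices of $\cg(S)$ with $d(a,c)=d(a,c')=n$ and $d(c,b)=d(c',b)=n$, then one of them lies in a complementary component of the other, and comparing the genera of the pieces on the $a$-side forces the subsurface they cobound to have genus $0$; since $S$ has no punctures that subsurface is an annulus, so $c=c'$. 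More generally, the ``genus coordinate'' relative to a fixed reference curve is a $1$-Lipschitz function on $\cg(S)$ taking the same value on both midpoints, so it detects only longitudinal distance along the ladder and can never certify transverse divergence. Hence the midpoints of two distinct geodesics from $a$ to $b$ must \emph{intersect}, and for intersecting curves the exact distance formula of Lemma~\ref{lem:Gconnected} gives no lower bound at all.

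The missing ingredient is a different lower-bound mechanism. The paper takes one geodesic $\gamma$ from $a$ to $b$ inside the compact genus-$K$ subsurface $\Sigma$ they cobound, and obtains the second geodesic as $f^k(\gamma)$ for a pseudo-Anosov $f\in\mcg(\Sigma)$ (which fixes $a$ and $b$), with $k$ chosen so large that every interior vertex of $f^k(\gamma)$ is at $\cc(\Sigma)$-distance greater than $K$ from every interior vertex of $\gamma$. The lower bound on the $\cg(S)$-distance from the midpoint of $\gamma$ to $f^k(\gamma)$ then splits into two cases: a path all of whose vertices meet $\Sigma$ projects, via subsurface projection, to a path in $\cc(\Sigma)$ of comparable length and hence must be long; a path containing a vertex disjoint from $\Sigma$ must first travel from the midpoint out past $a$ or $b$, which already costs about $K/2$ by the genus formula. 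To complete your argument you would need to replace the ``cobound genus $\sim n$'' step with this subsurface-projection estimate (or some other way of bounding distances between \emph{intersecting} curves from below); the remainder of your outline then goes through.
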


\begin{proof}
Fix $K >> 0$ and choose a compact genus-$K$ surface $\Sigma \subset S$ with two boundary components, denoted $a$ and $b$, each contained in $\cg(S)$ (see the surface cobounded by $a$ and $b$ in Figure \ref{fig:not-hyperbolic}). 
\begin{figure}[h]
\includegraphics[width=.6 \textwidth]{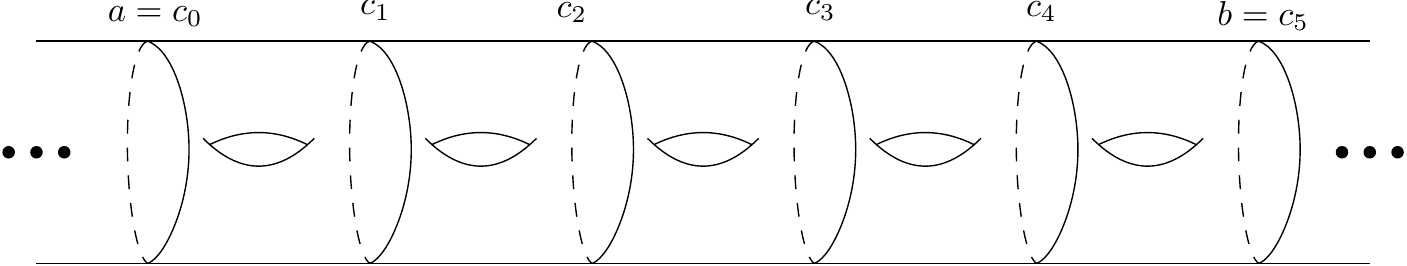}
\caption{A geodesic between $a$ and $b$ in $\cg(S)$.}
\label{fig:not-hyperbolic}
\end{figure}

Observe that for any $x,y\in \cg(S)$ contained in $\Sigma$
\[
d_{\cg(\Sigma)}(x,y) \geq d_{\cc(\Sigma)}(x,y),
\]
where $d_{\cc(\Sigma)}$ is the distance in $\cc(\Sigma)$ and $d_{\cg(\Sigma)}$ is the distance in $\cg(\Sigma)$.
Choose a geodesic $\gamma = (a=c_0, c_1, \ldots, c_K = b)$ in $\cg(S)$ (notice that the $c_i$ are contained in $\Sigma$).
Let $f\in  \mcg(\Sigma)$ be a pseudo-Anosov element.  Since $f$ acts loxodromically on $\cc(\Sigma)$, there exists $k \in \bz$ such that 
\[
d_{\cc(\Sigma)}(f^k(c_i), c_j) >  K
\]
for all $i,j \in \{1, \ldots, K-1\}$.
Set $c = c_{\lfloor K/2 \rfloor}$.  We claim that $d_{\cc(\Sigma)}(\gamma, c)$ is on the order of $K$.

For every $j \in \{1, \ldots, K-1\}$ there is a path connecting $c$ and $f^k(c_j)$ of length at most $K$ going through either $a$ or $b$ which travels along $\gamma$ and $f^k(\gamma)$.

Fix $j \in \{1, 2, \ldots, K-1\}$ and let $\eta = (c = e_0, e_1, \ldots, e_n = f^k(c_j))$ be a geodesic in $\cg(S)$.
Suppose every element of $\eta$ intersects $\Sigma$, then we can use subsurface projection to get a path in $\cc(\Sigma)$ whose length is on the order of $n$; in particular, $n$ must be on the order of $K$ by our assumptions.
Finally, suppose that $\eta$ contains a curve $e$ disjoint from $\Sigma$.
But, then either $d_{\cg(S)}(e, c) > d_{\cg(S)}(a,c)$ or $d_{\cg(S)}(e,c) > d_{\cg(S)}(b,c)$ implying $\eta$ has length greater than $\lfloor \frac K2 \rfloor$.  

Repeating this process for every such $K$, we see that geodesics between two points can be arbitrarily far apart.
It follows that $\cg(S)$ cannot be Gromov hyperbolic.
\end{proof}

\begin{Rem}
Brian Bowditch \cite{BowditchRank} proved, using connections with three-manifolds, that the graph $\cg(S)$ has geometric rank at least two, that is, there is a quasi-isometric embedding of $\bz^2$ into $\cg(S)$.
\end{Rem}

\appendix

\section{The ends}\label{appendix}
The underlying motivation for the constructions in this paper come from investigating the structure of the space of ends.
This structure does not directly show up in our proofs, but given its importance to our work, we explore the topology of the space of ends in this appendix.
We hope it will be helpful to others thinking about infinite-type surfaces. 

Note first that Richards \cite[Theorem 2]{RichardsClassification} proved that given any pair $(X,Y)$ of compact, separable, totally disconnected spaces satisfying $Y\subset X$ there exists a surface $S$ with $$(\Ends(S), \mathscr{AG}(S)) = (X,Y).$$

The discussion here can mostly be traced back to the Cantor-Bendixson derivative.
We refer the reader to \cite{KechrisClassical} for background on and proofs of the named theorems about Cantor sets that appear below.

We will discuss this derivative in the context of an infinite-type surface $S$.
It follows from Proposition \ref{prop:ends} and Brouwer's classification of the Cantor set that $\Ends(S)$ is homeomorphic to a closed subset of the Cantor set.
For the sequel, let $C$ denote the Cantor set and let $E\subset C$ be a closed subset.
Note that every closed subset of $C$ is totally disconnected, separable, and compact and can therefore be realized as $\Ends(S)$ for some $S$ (this is again \cite[Theorem 2]{RichardsClassification}). 

Given a topological space $X$, let $X'$ denote the set of limit points of $X$.
Using transfinite induction, the Cantor-Bendixson derivatives of $E$ are defined as follows:
\begin{itemize}
\item
$E^{(0)} = E$,
\item
$E^{(\al)} = \left(E^{(\al-1)}\right)'$ for any successor ordinal $\al$,
\item
$E^{(\al)} = \bigcap_{\be < \al} E^{(\be)}$ for any limit ordinal $\al$.  
\end{itemize}
Observe that each Cantor-Bendixson derivative of $E$ is invariant under $\mathrm{Homeo}(E)$.

Let us suppose that $E$ is countable.
Using the Cantor-Bendixson derivative, Mazurkiewicz-Sierpi\'nski \cite{MazurkiewiczContribution} gave a complete classification of countable compact Hausdorff spaces in terms of ordinals.
In particular, $E$ is homeomorphic to $\omega^\al n + 1$, where $\omega$ is the smallest infinite ordinal, $\al$ is a countable ordinal, and $n \in \bz_+$.
Here, we abused notation and conflated an ordinal $\al$ with the ordinal space consisting of all ordinals less than $\al$ endowed with the order topology.
The pair $(\al, n)$ is called to as the \emph{characteristic system} of $E$. 

If $E$ has characteristic system $(\al, n)$, then 
\begin{itemize}
\item
$|E^{(\beta)}| = \infty$ if $\be < \al$,
\item
$|E^{(\al)}| = n$, and
\item
$|E^{(\beta)}| = 0$ if $\be > \al$
\end{itemize}

\begin{Lem}\label{lem:countable}
Let $S$ be surface satisfying either  $\mathcal{AG}(S) = \emptyset$ or $\mathcal{AG}(S) = \Ends(S)$.
If $\Ends(S)$ is countable with characteristic system $(\al, n)$ and contains at least two points, then $\fii(S) = n$.
\end{Lem}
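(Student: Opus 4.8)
The plan is to show $\fii(S)=n$ by establishing both $\fii(S)\geq n$ and $\fii(S)\not\geq n+1$, exploiting the fact that the Cantor--Bendixson derivatives are topological invariants and hence preserved by any homeomorphism of $\Ends(S)$, which in turn are realized by the mapping class group. The key observation is that $E^{(\al)}$, the $\al$-th derivative of $E=\Ends(S)$, is a finite set of exactly $n$ points, and each such derivative is invariant under $\mathrm{Homeo}(\Ends(S))$. Since the map $\mcg(S)\to\mathrm{Homeo}(\Ends(S))$ has image that (at least for the surfaces in question, by Richards' realization and the classification) acts with enough transitivity, the relevant invariant collections will be governed by these derivative sets.

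First I would prove $\fii(S)\geq n$. The set $E^{(\al)}=\{x_1,\dots,x_n\}$ consists of $n$ distinct points, and each singleton $\{x_i\}$ is closed; moreover each is a proper subset since $E$ contains at least two points and in fact is infinite when $\al\geq 1$ (if $\al=0$ then $E$ is finite with $n$ points, and one handles this small case directly, noting each singleton is clopen and proper as $n=|E|\geq 2$). Because every $\vp\in\mcg(S)$ induces a homeomorphism of $\Ends(S)$ preserving the hypothesis $\mathcal{AG}(S)\in\{\emptyset,\Ends(S)\}$ and preserving $E^{(\al)}$ setwise, the collection $\cp=\{\{x_1\},\dots,\{x_n\}\}$ is $\mcg(S)$-invariant: any $\vp$ permutes the points $x_i$ among themselves. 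This is a collection of $n$ disjoint closed proper subsets, so $\fii(S)\geq n$ by Definition \ref{def:fii}. I would also need to confirm $\genus(S)$ is not finite and positive, so that the $\fii=\infty$ clause does not override; this follows from the hypothesis $\mathcal{AG}(S)=\emptyset$ (genus zero) or $\mathcal{AG}(S)=\Ends(S)$ (infinite genus, as every end is accumulated by genus).

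Next I would prove $\fii(S)\not\geq n+1$, which is the crux. Suppose toward a contradiction that $\cp=\{P_1,\dots,P_{n+1}\}$ is an $\mcg(S)$-invariant collection of disjoint closed proper subsets. The strategy is to show that invariance forces at least one $P_j$ to contain a point that can be moved off $P_j$ by a mapping class while some point of $P_j$ is fixed, contradicting the requirement that $\vp(P_j)$ be another element of $\cp$ (hence either equal to $P_j$ or disjoint from it). Concretely, I would analyze how the derivative filtration stratifies the ends: the $n$ points of $E^{(\al)}$ are the only ``topologically rigid'' points, while all points of $E\smallsetminus E^{(\al)}$ sit in orbits under $\mathrm{Homeo}(E)$ that are highly non-rigid (one can realize homeomorphisms fixing any prescribed point of $E^{(\al)}$ while permuting or displacing points of lower Cantor--Bendixson rank). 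Using the Mazurkiewicz--Sierpi\'nski normal form $\omega^\al n+1$, a point of rank $\beta<\al$ has a neighborhood homeomorphic to a standard piece, and one can build a homeomorphism supported near that point moving it while fixing the global structure elsewhere; realizing such homeomorphisms by mapping classes (legitimate since $\mathcal{AG}(S)$ is all-or-nothing, so there is no genus obstruction to extending end-homeomorphisms to the surface) shows no proper closed set other than a union of the $n$ rigid points can belong to an invariant disjoint collection of this size. Pigeonholing $n+1$ disjoint sets against $n$ rigid points yields the contradiction.

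The hard part will be the second half: precisely controlling which homeomorphisms of $\Ends(S)$ are realized by $\mcg(S)$ and showing that enough displacement-type homeomorphisms exist to obstruct an $(n+1)$-element invariant collection. The delicate point is that invariance allows the $P_j$ to be permuted among themselves, so I cannot simply fix each $P_j$; instead I must argue that the combinatorics of the Cantor--Bendixson strata, together with the transitivity of $\mathrm{Homeo}(E)$ on points of each fixed rank in the homogeneous normal form, leaves no room for more than $n$ disjoint invariant closed proper pieces. I expect this to reduce to a clean statement about the homeomorphism group acting on $\omega^\al n+1$, namely that its only finite invariant partition into nonempty closed sets by displacement-stability has at most $n$ parts, which I would isolate as the technical heart of the argument.
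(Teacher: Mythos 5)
Your lower bound is fine and matches the paper: the singletons of the points of $E^{(\al)}$ form an $\mcg(S)$-invariant collection of $n$ disjoint closed proper subsets (and your explicit check that the $\fii=\infty$ clause does not fire is a nice touch the paper leaves implicit). The problem is the upper bound, which is the entire content of the lemma and which your proposal does not actually prove: you reduce it to ``a clean statement about the homeomorphism group acting on $\omega^\al n+1$'' that you explicitly defer, so the technical heart is missing. Moreover, the intermediate claim you lean on --- that no proper closed set other than a union of the $n$ points of $E^{(\al)}$ can belong to such an invariant collection --- is false as stated: for $0<\be\leq\al$ the derivative $E^{(\be)}$ is itself a closed, proper, invariant subset that is not a union of points of $E^{(\al)}$ when $\be<\al$. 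The correct (and weaker) statement, which is what one actually needs, is that every member of a nonempty invariant disjoint collection must \emph{contain} a point of $E^{(\al)}$; disjointness then caps the size at $n$.

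The paper proves exactly that by a transfinite induction up the Cantor--Bendixson ranks, and this is the step your displacement heuristic would have to replace. Concretely: if some $P_i$ meets $E^{(\be)}$, then transitivity of the induced homeomorphisms on points of a fixed rank, together with invariance of the collection, forces $E^{(\be)}\subset\bigcup_i P_i$; since $E^{(\be)}$ is infinite for $\be<\al$, the pigeonhole principle gives a single $P_j$ meeting $E^{(\be)}$ in an infinite set, and compactness of $P_j$ then forces $P_j\cap E^{(\be+1)}\neq\emptyset$. Iterating (with a nested-compact-sets argument at limit ordinals) drives some member of the collection all the way up into $E^{(\al)}$; peeling off the members that meet $E^{(\al)}$ leaves another invariant collection, which must therefore be empty, giving $m\leq n$. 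Your proposed mechanism --- find $\vp$ fixing a point of some $P_j$ while moving another point of $P_j$ off $\bigcup\cp$ --- is a legitimate way to derive a contradiction in principle, but you never construct such a $\vp$, and doing so for a $P_j$ consisting of points of intermediate rank (where any homeomorphism moving a limit point must move a whole clopen neighborhood, possibly onto another $P_k$) is not obviously easier than the compactness argument it would replace. As written, the proposal is a plan rather than a proof.
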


\begin{proof}
Let $E = \Ends(S)$ and let $E^{(\al)} = \{a_1, \ldots, a_n\}$.
Observe that $E^{(\al)}$ is an invariant set so $\fii(S) \geq n$.

Now let $\mathcal P = \{P_1, \ldots P_m\}$ be an invariant collection of pairwise disjoint proper closed subsets of $E$.
Let $\be < \al$ and let $x \in E^{(\be)}$.
If there exists $ i \in \{1, \ldots m \}$ such that  $x \in P_i$, then
\[
E^{(\be)} \subset \bigcup_{i=1}^m P_i
\]
as $E^{(\be)}$ is homogeneous.
Moreover, as $\be < \al$, we know $|E^{(\be)}|$ is infinite and therefore there exists $j_\be \in \{1, \ldots, m\}$ such that $|P_{j_\be} \cap E^{(\be)}|$ is infinite.
As $P_{j_\be}$ is compact and infinite, it must have a limit point; in particular, $P_{j_\be} \cap E^{(\be+1)}$ is nonempty.

For $k \in \mathbb N$, let $\be_k = \beta + k $.
Repeating the above argument, we see there exists $j_k \in \{1, \ldots, m \}$ such that $|P_{j_k}\cap E^{(\be_k)}|$ is infinite.
Another application of the pigeonhole principle implies that there exists $j \in \{1, \ldots, m\}$ such that $|P_j \cap E^{(\be_k)}|$ is infinite for infinitely many values of $k$.
As $P_j \cap E^{(\be_k)}$ is compact and $P_j \cap E^{(\be_k)} \supset P_j \cap E^{(\be_{k+1})}$ for all $k \in \mathbb N$, we see that
\[
\bigcap_{k\in\mathbb N} P_j \cap E^{(\be_k)} \neq \emptyset.
\]
In particular, if $\delta = \sup_k \{\beta_k\}$, then $P_j \cap E^{(\delta)}$ is nonempty.  

This inductive process guarantees that for each $ l \in \{1, \ldots n \}$ there exists $j_l \in \{1, \ldots, m\}$ such that $a_l \in P_{j_l}$.

Observe that if the collection $\mathcal P' = \mathcal P \smallsetminus \{P_{j_1}, \ldots, P_{j_n} \}$ is nonempty, then it must also be a finite invariant collection of closed proper subsets.
The same argument as above then implies there exists $P \in\mathcal P'$ containing an element of $E^{(\al)}$, but this is a contradiction since $P \cap P_{j_l} = \emptyset$ for all $l \in \{1, \ldots, n\}$.
We can conclude that $\mathcal P'$ is empty and hence $m \leq n$; therefore, $\fii(S) \leq n$.
\end{proof}

Let us now assume that $E$ is uncountable. 
It is an immediate corollary to the Cantor-Bendixson theorem and Brouwer's classification of the Cantor set that $E = C_0 \sqcup Q_0$, where $C_0 \cong C$ and $Q_0$ is homeomorphic to an open countable subset of $C$.
Also observe that $\partial Q_0 \subset C_0$ is a closed subset of $C$.
Reiterating the above discussion, we can build sets $C_\al$ and $Q_\al$ inductively for a cardinal $\al > 1$ as follows:
\begin{itemize}
\item
if $\al$ is a successor cardinal, set $\partial Q_{\al-1} = C_\al \sqcup Q_\al$.
\item
if $\al$ is a limit cardinal, set $\displaystyle{\bigcap_{\be < \al} C_\be = C_\al \sqcup Q_\al}$.
\end{itemize}
Observe that for each ordinal $\al$, the sets $C_\al, \partial Q_\al,$ and $\bar Q_\al$ are closed invariant subsets of $E$.
The point here is that $\mathrm{Homeo}(E)$-invariant sets can be buried deep in the Cantor set. 
This leads us to our last proposition characterizing $\fii = 0$ surfaces.  

\begin{Prop}\label{prop:appendix}
If $S$ is an infinite-type surface with $\fii(S) = 0$, then $S$ is one of:
\begin{enumerate}
\item
the Cantor tree surface,
\item
the blooming Cantor tree surface, or
\item
the Loch Ness monster surface.
\end{enumerate}
\end{Prop}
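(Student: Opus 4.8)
The plan is to prove the classification by assuming $\fii(S)=0$ and successively constraining the three pieces of Richards' invariant $(\genus(S),\Ends(S),\mathscr{AG}(S))$. The guiding principle is that \emph{any} subset of $\Ends(S)$ defined in purely topological terms is automatically preserved by the image of $\mcg(S)$ in $\mathrm{Homeo}(\Ends(S))$; hence such a set---or a finite collection of such sets permuted among themselves by $\mcg(S)$---witnesses $\fii(S)\geq 1$ unless it is trivial. In particular this direction never requires the (true but stronger) surjectivity of $\mcg(S)\to\mathrm{Homeo}(\Ends(S),\mathscr{AG}(S))$; merely exhibiting invariant sets suffices.

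First I would record that $\fii(S)=0$ forces $\genus(S)\in\{0,\infty\}$, since finite positive genus gives $\fii(S)=\infty$. Next, because $\mathscr{AG}(S)$ is closed and setwise $\mcg(S)$-invariant, the one-element collection $\{\mathscr{AG}(S)\}$ would already witness $\fii(S)\geq 1$ were $\mathscr{AG}(S)$ a nonempty proper subset; thus $\mathscr{AG}(S)\in\{\emptyset,\Ends(S)\}$. A short compactness argument---covering $\Ends(S)$ by finitely many clopen sets whose corresponding neighborhoods in $S$ have finite genus---shows that infinite genus forces $\mathscr{AG}(S)\neq\emptyset$. So the genus-$0$ case has $\mathscr{AG}(S)=\emptyset$ and the genus-$\infty$ case has $\mathscr{AG}(S)=\Ends(S)$ (no planar ends).

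The heart of the argument is then a dichotomy on $\Ends(S)$ via its perfect kernel $C_0=\bigcap_\alpha \Ends(S)^{(\alpha)}$, which is topologically characterized and therefore $\mcg(S)$-invariant. If $C_0$ is a nonempty proper subset, then $\{C_0\}$ gives $\fii(S)\geq 1$, a contradiction; so $C_0=\emptyset$ or $C_0=\Ends(S)$. If $C_0=\emptyset$ then $\Ends(S)$ is countable, and since we are already in one of the cases $\mathscr{AG}(S)=\emptyset$ or $\mathscr{AG}(S)=\Ends(S)$, Lemma \ref{lem:countable} applies to yield $\fii(S)=n\geq 1$ whenever $|\Ends(S)|\geq 2$; hence $\Ends(S)$ is a single point. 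If instead $C_0=\Ends(S)$ then $\Ends(S)$ is perfect, compact, and totally disconnected, so by Brouwer's classification of the Cantor set it is homeomorphic to $C$.

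Finally I would assemble the surviving combinations against Richards' classification. In genus $0$ (so $\mathscr{AG}(S)=\emptyset$) the one-point case is finite-type and is excluded by the infinite-type hypothesis, leaving $\Ends(S)\cong C$: the Cantor tree surface. In genus $\infty$ (so $\mathscr{AG}(S)=\Ends(S)$) the one-point case is the Loch Ness monster surface and the Cantor-set case is the blooming Cantor tree surface; each is pinned down uniquely by its triple $(\genus,\Ends,\mathscr{AG})$. The main obstacle is not any single hard estimate but the bookkeeping of these cases: one must check that every excluded configuration genuinely produces a nonempty proper invariant set, or an invariantly permuted collection of them. The subtlety driving the whole argument is precisely that $\mcg(S)$-invariance of a \emph{collection} permits its members to be permuted, which is exactly why the canonical sets $\mathscr{AG}(S)$ and $C_0$, together with Lemma \ref{lem:countable} for the scattered case, are the right objects to track.
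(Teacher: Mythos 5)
Your argument is correct and follows essentially the same route as the paper's proof: use the invariance of $\mathscr{AG}(S)$ to force it to be empty or all of $\Ends(S)$, use the Cantor--Bendixson decomposition (the perfect kernel $C_0$ and its invariance) to force $\Ends(S)$ to be countable or a Cantor set, and invoke Lemma \ref{lem:countable} in the countable case to reduce to a single end. The only differences are organizational (you constrain the genus first and make explicit the compactness argument that infinite genus forces $\mathscr{AG}(S)\neq\emptyset$, which the paper leaves implicit), so no further comment is needed.
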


\begin{proof}
If $S$ has both planar ends and ends accumulated by genus, then $\mathcal{AG}(S)$ is a proper closed invariant subset and $\fii(S) > 0$.
We can therefore assume that either $\mathcal{AG}(S) = \emptyset$ or $\mathcal{AG}(S) = \Ends(S)$.

First suppose that $\Ends(S)$ is countable and let $(\al,n)$ be its characteristic system.
By Lemma \ref{lem:countable}, if $\Ends(S)$ has at least two points, then $\fii(S) = n \geq 1$.
It follows that $\Ends(S)$ must be a single point and therefore $S$ must be the one-ended infinite-genus surface and thus homeomorphic to the Loch Ness monster surface.

Now assume that $\Ends(S)$ is uncountable, so $\Ends(S) = C_0 \sqcup Q_0$, where $C_0$ is a Cantor set and $Q_0$ is countable.
If $Q_0$ is nonempty, then $C_0$ is a closed invariant proper subset and $\fii(S) > 0$.
It follows that $Q_0$ is empty and $\Ends(S)$ is a Cantor set.
By the definition of $\fii$, the genus of $S$ is either 0 or infinite.
As all the ends of $S$ are either planar or accumulated by genus, we can conclude that $S$ is either the Cantor tree surface or the blooming Cantor tree surface, respectively.
\end{proof}

\bibliographystyle{alpha}
\bibliography{big_graph}

\end{document}